\theoremstyle{plain}
\newtheorem{Theorem}{Theorem}[section]
\newtheorem{Lemma}{Lemma}[section]
\newtheorem{Corollary}{Corollary}[section]
\newtheorem{Example}{Example}[section]
\theoremstyle{remark}
\newtheorem{remark}{Remark}
\numberwithin{equation}{section}
\numberwithin{figure}{section}
\numberwithin{remark}{section}
\begin{document}
\begin{sloppypar}

	\title{On a class of Cauchy problems with applications in nonlinear partial differential equations}

	\author{Feida Jiang}
	\address{School of Mathematics and Shing-Tung Yau Center of Southeast University, Southeast University, Nanjing 211189, P.R. China; Shanghai Institute for Mathematics and Interdisciplinary Sciences, Shanghai, 200433, P.R.China}
	\email{jiangfeida@seu.edu.cn}	
	\author{Neil S. Trudinger$^*$}
	\address{Mathematical Sciences Institute, The Australian National University, Canberra ACT 0200, Australia}
	\email{neil.trudinger@anu.edu.au}
	\author{Qiao-Qiao Xu}
	\address{School of Mathematics and Shing-Tung Yau Center of Southeast University, Southeast University, Nanjing 211189, P.R. China}
	\email{qiaoqiaoxu@seu.edu.cn}
	\thanks{This work was supported by the National Natural Science Foundation of China (No. 12271093), Australian Research Council (No. DP230100499), the Jiangsu Provincial Scientific Research Center of Applied Mathematics (Grant No. BK20233002) and the Start-up Research Fund of Southeast University (No. 4007012503).}

	\subjclass[2010]{35J60, 35J96, 35B65}
	
	\date{\today}
	\thanks{*corresponding author}
	
	\keywords{Cauchy problem; Keller-Osserman condition; $k$-Hessian equation; $\Pi_k$-Hessian equation; entire subsolution.}

	\begin{abstract}
	In this paper, we investigate the existence and nonexistence of entire solutions to a general class of Cauchy problems in the interval $[0, +\infty)$.
Our results provide a unified approach to proving Keller-Osserman type results for nonlinear partial differential equations in Euclidean space $\mathbb{R}^n$. As applications of the general framework, we present Keller-Osserman type results for two series of nonlinear equations: a series of $k$-Hessian type equations and a new series of $\Pi_k$-Hessian type equations. These results are also proved for the more general $p$-Hessian matrices, with $p>1$, and degenerate inhomogeneous terms. 
         \end{abstract}


\maketitle

\section{Introduction}\label{Section 1}

\sloppy{}

In this paper, we study a class of Cauchy problems of the following form:
	\begin{equation}\label{1.1}
	\left\{
	\begin{aligned}
	&v'(r)= C  r^{-q}  \left(  \int_{0}^{r} s^{\tau-1} g (v(s))ds \right )^{\frac{1}{\theta}}, \quad r\in (0, +\infty), \\
	&v(0)=a,
	\end{aligned}
	\right.
	\end{equation}
for any given initial constant $a$, and constants $q\ge 0$, $\tau >0$, $\theta> 0$, $C>0$, where $g: \mathbb{R} \to (0,+\infty)$ is a non-decreasing continuous function, and $v: [0, +\infty) \to [a, +\infty)$ is an unknown function.

We are interested in proving both local existence and entire existence of solutions of the Cauchy problem \eqref{1.1}, which will be applied to various classes of nonlinear partial differential equations, extending the basic work of  Bao and Ji  \cite{JB2010} on $k$-Hessian equations. We now formulate our first main result, the local existence and regularity for \eqref{1.1}, under the condition $\tau > q\theta$.
\begin{Theorem}[Local existence and regularity]\label{Th 1.1}
Assume that $g: \mathbb{R} \to (0,+\infty)$ is a non-decreasing continuous function, the constants $q\ge 0$, $\theta> 0$, $C>0$ and $\tau > q\theta$. Then for any constant $a$, there exists a positive number $R$ such that the Cauchy problem \eqref{1.1} admits a solution $v \in C^1[0, R)\cap C^2(0, R)$ with $v'(0)=0$, $v'(r)>0$ and $v''(r)>0$ for $r\in (0, R)$.
Moreover, the following properties hold:
\begin{itemize}
\item[(i)] If $\tau> \theta (q+1)$, then $v\in C^2[0, R)$ with $v''(0)=0$; 

\item[(ii)] If $\tau= \theta (q+1)$, then $v\in C^2[0, R)$ with $v''(0)=C(\frac{g(a)}{\tau})^{\frac{1}{\theta}}$; 

\item[(iii)] If $\tau < \theta (q+1)$, then $v \in C^2(0, R)\cap W^{2, \delta}(0, R)$, where $\delta \in (1, \frac{\theta}{\theta (q+1)-\tau})$.
\end{itemize}
\end{Theorem}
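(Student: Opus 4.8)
The plan is to recast \eqref{1.1} as a fixed-point problem for the integral operator
\[
(Tv)(r) \;=\; a + C\int_0^r t^{-q}\Bigl(\int_0^t s^{\tau-1} g(v(s))\,ds\Bigr)^{1/\theta}\,dt ,
\]
acting on the closed, bounded, convex set $X_R=\{v\in C[0,R]:\ a\le v\le a+1\}$ for $R>0$ small. Since $g$ is only continuous, a Banach contraction argument is not available, so I would exploit the monotonicity of $g$ via a monotone iteration: set $v_0\equiv a$ and $v_{n+1}=Tv_n$. The operator $T$ is order-preserving (because $g$ is non-decreasing and $x\mapsto x^{1/\theta}$ is increasing), and using $g(v)\le g(a+1)=:M$ one gets $\int_0^t s^{\tau-1}g(v(s))\,ds\le \frac{M}{\tau}t^{\tau}$, hence
\[
0\le (Tv)(r)-a\ \le\ C\Bigl(\tfrac{M}{\tau}\Bigr)^{1/\theta}\,\frac{r^{\,\tau/\theta-q+1}}{\tau/\theta-q+1},
\]
which is finite since $\tau>q\theta>\theta(q-1)$ and tends to $0$ as $r\to0$; thus for $R$ small, $T$ maps $X_R$ into itself. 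Consequently $v_0\le v_1\le\cdots$ in $X_R$, the limit $v=\lim_n v_n$ exists in $[a,a+1]$, and passing to the limit by monotone/dominated convergence gives $v\in C[0,R]$ with $v=Tv$, i.e. a solution of \eqref{1.1} in integral form. (Equivalently, one could check $T(X_R)$ is equicontinuous from the same bound and invoke Schauder's theorem.)

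Interior regularity and monotonicity follow by differentiation. With $h(r):=\int_0^r s^{\tau-1}g(v(s))\,ds$, continuity of $v$ gives $h\in C^1(0,R)$ with $h'(r)=r^{\tau-1}g(v(r))>0$, so $v'(r)=C\,r^{-q}h(r)^{1/\theta}>0$ is $C^1$ on $(0,R)$; hence $v\in C^2(0,R)$ and $v'>0$ there. Differentiating once more,
\[
v''(r)=C\,r^{-q-1}\,h(r)^{1/\theta-1}\Bigl(-q\,h(r)+\tfrac1\theta\, r^{\tau}g(v(r))\Bigr),
\]
and since $v$ is increasing, $h(r)\le \frac{r^\tau}{\tau}\,g(v(r))$, so the bracket is $\ge r^\tau g(v(r))\bigl(\tfrac1\theta-\tfrac q\tau\bigr)=r^\tau g(v(r))\,\tfrac{\tau-q\theta}{\theta\tau}>0$ because $\tau>q\theta$; thus $v''>0$ on $(0,R)$.

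The crux is the behaviour at $r=0$. From $v(r)\to a$ and continuity of $g$ one has $h(r)=\tfrac{g(a)}{\tau}\,r^{\tau}(1+o(1))$, whence $v'(r)=C\bigl(g(a)/\tau\bigr)^{1/\theta} r^{\tau/\theta-q}(1+o(1))\to0$ as $r\to0$ (here $\tau>q\theta$ is used), so $v$ is differentiable at $0$ with $v'(0)=0$ and $v'\in C[0,R)$; in particular $v\in C^1[0,R)$ and, since $v''$ is integrable near $0$ (its exponent below exceeds $-1$), $v'(r)=\int_0^r v''(t)\,dt$. Substituting the asymptotics of $h$ into the formula for $v''$ yields $v''(r)=\kappa\,r^{\,\tau/\theta-q-1}(1+o(1))$ with $\kappa:=C\,\tfrac{\tau-q\theta}{\theta}\bigl(g(a)/\tau\bigr)^{1/\theta}>0$. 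In case (i) the exponent $\tau/\theta-q-1$ is positive, so $v''(r)\to0$; together with $v'(r)/r\to0$ this gives $v''(0)=0$ and a continuous extension of $v''$ to $[0,R)$, i.e. $v\in C^2[0,R)$. In case (ii) the exponent vanishes and $\tau-q\theta=\theta$, so $v''(r)\to\kappa=C\bigl(g(a)/\tau\bigr)^{1/\theta}$, while a direct computation gives $v'(r)/r\to C\bigl(g(a)/\tau\bigr)^{1/\theta}$ as well; hence $v\in C^2[0,R)$ with $v''(0)=C\bigl(g(a)/\tau\bigr)^{1/\theta}$. In case (iii) the exponent is negative, $v''$ is unbounded at $0$ so $v\notin C^2[0,R)$, but $|v''(r)|^\delta\sim r^{\delta(\tau/\theta-q-1)}$ is integrable on $(0,R)$ exactly when $\delta(q+1-\tau/\theta)<1$, i.e. $\delta<\tfrac{\theta}{\theta(q+1)-\tau}$; since $v$ and $v'$ are bounded on $[0,R)$, this yields $v\in W^{2,\delta}(0,R)$. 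The main obstacle is precisely this boundary analysis---tracking $h(r)$ and its (possibly fractional) powers finely enough to pin down the sharp exponent $\tau/\theta-q-1$ and, in the critical case $\tau=\theta(q+1)$, the exact value $C(g(a)/\tau)^{1/\theta}$ of $v''(0)$---whereas the fixed-point construction and the interior estimates are comparatively routine.
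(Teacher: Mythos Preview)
Your proof is correct and reaches the same conclusions, but your construction of the local solution is genuinely different from the paper's. The paper builds $v$ by Euler's method: it defines a piecewise-linear $\varepsilon$-approximation $\psi$ on a partition of $[0,l]$, verifies $|\psi'-G(\cdot,\psi)|<\varepsilon$ (which forces a case split between $\theta\ge 1$, where the subadditivity inequality $|x^{1/\theta}-y^{1/\theta}|\le |x-y|^{1/\theta}$ is used, and $0<\theta<1$, where one must insert and subtract a cross term and apply the mean value theorem), and then passes to a limit via Arzel\`a--Ascoli. You instead exploit the monotone structure: because $g$ is non-decreasing and $x\mapsto x^{1/\theta}$ is increasing, the integral operator $T$ is order-preserving, so the iterates $v_0\equiv a$, $v_{n+1}=Tv_n$ increase to a fixed point by monotone convergence. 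This is cleaner and, notably, makes no distinction between $\theta\ge 1$ and $\theta<1$, whereas the paper singles out the $0<\theta<1$ analysis as one of its novel contributions; your route shows that for the \emph{existence} step this split is an artefact of the Euler scheme rather than intrinsic. On the other hand, the paper's approach is more explicitly constructive and does not rely on the fixed-point perspective.

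For the regularity at $r=0$ the two arguments coincide in substance: the paper computes $\lim_{r\to 0}v''(r)/r^{(\tau-\theta(q+1))/\theta}$ via l'H\^opital's rule, while you obtain the same leading behaviour $v''(r)\sim \kappa\, r^{\tau/\theta-q-1}$ by substituting $h(r)=\tfrac{g(a)}{\tau}r^{\tau}(1+o(1))$ directly into the formula for $v''$. Both yield the sharp exponent and the value $\kappa=C\tfrac{\tau-q\theta}{\theta}(g(a)/\tau)^{1/\theta}$, which in case~(ii) collapses to $C(g(a)/\tau)^{1/\theta}$.
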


For the Cauchy problem \eqref{1.1}, we introduce the following integral condition to guarantee the  existence of entire solutions:
	\begin{equation}\label{KO}
	\int^{+\infty} \left ( \int_0^t g(s)ds \right )^{-\frac{1}{\theta+1}} dt = + \infty,
	\end{equation}
where we omit the lower limit of integral to allow any positive number.

We then formulate our second main result, which shows that condition \eqref{KO} is both necessary and sufficient for the existence of entire solutions of the Cauchy problem \eqref{1.1}, under the condition $\tau \ge q\theta+1$.	
\begin{Theorem}\label{Th 1.2}
Assume that $g: \mathbb{R} \to (0,+\infty)$ is a non-decreasing continuous function, the constants $q\ge 0$, $\theta> 0$, $C>0$ and $\tau \ge \theta q+1$. Then the Cauchy problem \eqref{1.1} has a solution $v \in C^1[0, +\infty)\cap C^2(0, +\infty)$ with $v'(0)=0$, $v'(r)>0$ and $v''(r)>0$ for $r\in (0, +\infty)$ and for any initial value $a$, if and only if the condition \eqref{KO} holds. Moreover, the following properties hold:
\begin{itemize}
\item[(i).] If $\tau> \theta (q+1)$, then $v\in C^2[0, +\infty)$ with $v''(0)=0$; 

\item[(ii).] If $\tau= \theta (q+1)$, then $v\in C^2[0, +\infty)$ with $v''(0)=C(\frac{g(a)}{\tau})^{\frac{1}{\theta}}$; 

\item[(iii).] If $\tau < \theta (q+1)$, then $v \in C^2(0, +\infty) \cap W_{loc}^{2, \delta}(0, +\infty)$, where $\delta \in (1, \frac{\theta}{\theta (q+1)-\tau})$.
\end{itemize}
\end{Theorem}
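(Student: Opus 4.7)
My plan is to prove Theorem 1.2 by combining the local existence given by Theorem 1.1 with a two-sided integral estimate that converts the Keller-Osserman condition \eqref{KO} into a statement about the blow-up time of the local solution. Since $\tau \geq q\theta + 1 > q\theta$, Theorem 1.1 supplies a solution on a maximal interval $[0, R^*)$ with $v', v'' > 0$ and with the regularity at $r = 0$ described in cases (i)-(iii); a standard continuation argument forces either $R^* = +\infty$ or $v(r) \to +\infty$ as $r \to R^*$. The regularity assertions (i)-(iii) will follow directly once $R^* = +\infty$ is established, since $C^2$ regularity on $(0,+\infty)$ is automatic from the ODE and the behavior at $r = 0$ is purely local.

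The analytic engine will be an energy-type identity. Differentiating $r^{q\theta}(v')^\theta = C^\theta \int_0^r s^{\tau-1} g(v(s))\,ds$ yields a first-order linear ODE for $(v')^{\theta+1}$; after dividing by $r^{q\theta}$ and multiplying by the integrating factor $r^{q(\theta+1)}$ (equivalently, multiplying the differentiated equation by $v'$ and integrating by parts twice), one obtains the clean identity
\[
r^{q(\theta+1)}(v'(r))^{\theta+1} = \frac{(\theta+1)C^\theta}{\theta}\int_0^r s^{\tau+q-1}\, g(v(s))\,v'(s)\,ds.
\]
The hypothesis $\tau \geq q\theta+1$ gives $\tau+q-1 \geq q(\theta+1) \geq 0$, so $s^{\tau+q-1}$ is sandwiched between $0$ and $r^{\tau+q-1}$ on $[0,r]$. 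Combining the upper bound with $\int_0^r g(v)v'\,ds = G(v(r)) - G(a)$, where $G(t) := \int_a^t g(s)\,ds$, yields $v'(r) \leq K\, r^{(\tau-q\theta-1)/(\theta+1)}[G(v(r)) - G(a)]^{1/(\theta+1)}$; the same identity run in reverse with $s^{\tau+q-1} \geq (r/2)^{\tau+q-1}$ on the dyadic annulus $s \in [r/2, r]$ delivers the lower bound $(v')^{\theta+1} \geq K_2\, r^{\tau-q\theta-1}\bigl[G(v(r)) - G(v(r/2))\bigr]$.

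For sufficiency I will separate variables in the upper bound and integrate to reach $\int_a^{v(r)}[G(t) - G(a)]^{-1/(\theta+1)}\,dt \leq K' r^{(\tau-q\theta+\theta)/(\theta+1)}$; if $R^* < +\infty$ were to hold, then $v(r) \to +\infty$ would force the left-hand side to diverge by \eqref{KO}, contradicting the boundedness of the right-hand side, so $R^* = +\infty$. For necessity I will integrate the lower bound over $[r_0, 2r_0]$ and change variables $t = v(r)$; using the monotone bound $v(r/2) \leq v(r_0)$ on this interval produces
\[
\int_{v(r_0)}^{v(2r_0)}\bigl[G(t) - G(v(r_0))\bigr]^{-1/(\theta+1)}\,dt \geq K_3\, r_0^{(\tau-q\theta+\theta)/(\theta+1)}.
\]
Since the crude estimate $v'(r) \geq C(g(a)/\tau)^{1/\theta} r^{(\tau-q\theta)/\theta}$ (from $g \geq g(a) > 0$) forces $v(r_0) \to +\infty$ as $r_0 \to +\infty$, sending $r_0 \to +\infty$ compels the shifted integral on the left to grow without bound, which is essentially \eqref{KO}.

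The main obstacle lies in the final step of the necessity argument: the shifted integrand $[G(t) - G(v(r_0))]^{-1/(\theta+1)}$ is not literally the Keller-Osserman integrand, so I will need a short comparison lemma showing that its unbounded growth in $r_0$ is equivalent to \eqref{KO}. Two elementary facts should suffice: the singularity of the shifted integrand at $t = v(r_0)$ is integrable because $\theta > 0$ (bounded using monotonicity of $g$ by $[g(v(r_0))(t-v(r_0))]^{-1/(\theta+1)}$), while on the tail $t \geq 2v(r_0)$ the inequality $G(t) - G(v(r_0)) \geq G(t)/2$ renders the shifted tail comparable to a tail of $\int^{+\infty}[G(t)]^{-1/(\theta+1)}\,dt$, so unbounded growth of the shifted integral is equivalent to divergence of the Keller-Osserman integral itself.
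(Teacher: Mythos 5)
Your approach shares the same broad skeleton as the paper's (local existence from Theorem 1.1 plus a two-sided integral estimate), but the mechanism is genuinely different, and the difference matters. Your integral identity
$r^{q(\theta+1)}(v'(r))^{\theta+1} = \frac{(\theta+1)C^\theta}{\theta}\int_0^r s^{\tau+q-1}g(v(s))v'(s)\,ds$
is correct (it is what one gets by multiplying the paper's identity \eqref{3.1} by $(\theta+1)r^{q(\theta+1)}v'$), and your upper bound via $s^{\tau+q-1}\le r^{\tau+q-1}$ recovers the paper's estimate \eqref{3.3} cleanly. The sufficiency argument is therefore essentially the same as the paper's and sound (modulo the usual comparison between $\int_a^t g$ and $\int_0^t g$). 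Where you genuinely diverge is the necessity direction: the paper's engine is Lemma \ref{Lem 3.1}, which uses failure of \eqref{KO} to absorb the term $\frac{q}{r}(v')^\theta$ in \eqref{3.1} into the right-hand side and only then integrates from a \emph{fixed} radius $R_0$. Your scheme bypasses Lemma \ref{Lem 3.1} entirely by extracting the lower bound from the dyadic annulus $s\in[r/2,r]$ in the exact integral identity, then integrating over $[r_0,2r_0]$ and letting $r_0\to+\infty$. That is a nice and more direct idea.

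However, there is a genuine gap in the final ``comparison lemma,'' and the two facts you offer do not close it. You obtain
$\int_{v(r_0)}^{v(2r_0)}\bigl[G(t)-G(v(r_0))\bigr]^{-\frac{1}{\theta+1}}dt\ge K_3\,r_0^{\frac{\tau-q\theta+\theta}{\theta+1}}\to+\infty$,
and wish to derive a contradiction from failure of \eqref{KO}. To do so you must bound the left-hand side above, uniformly in $r_0$, by a quantity that stays finite when \eqref{KO} fails. Your split gives a tail term that is fine (it is dominated by $\int_{2v(r_0)}^{\infty}\bigl(\int_0^{t/2}g\bigr)^{-\frac{1}{\theta+1}}dt$, which tends to $0$ when \eqref{KO} fails), but the near-singular piece is bounded only by
$\frac{\theta+1}{\theta}\,\bigl(v(r_0)\bigr)^{\frac{\theta}{\theta+1}}\,g^{-\frac{1}{\theta+1}}\bigl(v(r_0)\bigr)$,
which depends on $r_0$ and is \emph{not} controlled merely by ``integrability of the singularity.'' To rule out that this piece itself blows up and spoils the contradiction, you still need to know that failure of \eqref{KO} forces $t\bigl(\int_0^t g\bigr)^{-\frac{1}{\theta+1}}\to 0$ as $t\to+\infty$, whence $t^{\frac{\theta}{\theta+1}}g^{-\frac{1}{\theta+1}}(t)\to 0$ via $\int_0^t g\le tg(t)$. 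That is precisely the opening estimate in the proof of Lemma \ref{Lem 3.1}. So while your route avoids the remainder of that lemma, you have not escaped its core ingredient, and as written the necessity argument is incomplete without it.
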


In Theorem \ref{Th 1.2}, if the entire solution $v(r)$ exists, then it is unbounded, namely $\lim\limits_{r\to +\infty} v(r)= +\infty$.
This is because $v'(r)>0$ and $v''(r)>0$ for $r\in (0, +\infty)$.

Note that an alternative assumption for $g$ in Theorem \ref{Th 1.2} is that $g: \mathbb{R} \to [0, \infty)$ is nonnegative and non-decreasing satisfying $g(0)=0$ and $g(t)>0$ for $t>0$. In this case, we require the initial value $a\ge 0$, and only obtain a nonnegative entire solution $v$, see Remark \ref{Re 6.6}. 

Since condition \eqref{KO} is both necessary and sufficient for the existence of entire solutions of the Cauchy problem \eqref{1.1}, we may instead consider the boundary blow-up problem in bounded domains if \eqref{KO} is not satisfied; see \cite{JDJ2025} for an example.

\begin{remark}\label{Remark 1.1}
Note that the assumption on $\tau$ for the local existence in Theorem \ref{Th 1.1} is $\tau > \theta q$, while the assumption on $\tau$ for the global existence in Theorem \ref{Th 1.2} is $\tau \ge \theta q+1$. When $\theta \ge 1$ in Theorem \ref{Th 1.2}, then $\tau$ in (i) or (ii) automatically satisfies $\tau \ge \theta q+1$. When $\theta > 1$ in Theorem \ref{Th 1.2}, $[\theta q+1, \theta (q+1))$ is the proper interval for $\tau$ in (iii). When $0<\theta<1$, under the assumption $\tau \ge \theta q+1$, only case (i) can occur. Therefore, Theorem \ref{Th 1.2}  is different in applications to the $\theta \ge 1$ and $0<\theta <1$ cases.
\end{remark}

Next, we formulate our third main result, which is the entire existence and nonexistence for the Cauchy problem \eqref{1.1} under the condition $\theta q< \tau < q\theta+1$. In this part, the necessary and  sufficient conditions for the entire existence of \eqref{1.1} are different.
\begin{Theorem}\label{Th 1.3}
Assume that $g: \mathbb{R} \to (0,+\infty)$ is  non-decreasing and continuous, the constants $q\ge 0$, $\theta> 0$, $C>0$ and $\theta q  < \tau <  q\theta+1$. 

(a). If $g$ satisfies
	\begin{equation}\label{1.3}
	\int^{+\infty} \left ( \int_0^t g^{\kappa}(s)ds \right )^{-\frac{1}{\kappa \theta+1}} dt = + \infty,
	\end{equation}
where $\kappa =\frac{1-\epsilon}{\tau -\theta q - \epsilon}$ for some constant $\epsilon\in (0, \tau-\theta q)$,
then the Cauchy problem \eqref{1.1} has a solution $v \in C^1[0, +\infty)\cap C^2(0, +\infty)$ with $v'(0)=0$, $v'(r)>0$ and $v''(r)>0$ for $r\in (0, +\infty)$ and for some initial value $a$. Moreover, the following properties hold:
\begin{itemize}
\item[(i).] If $\tau> \theta (q+1)$, then $v\in C^2[0, +\infty)$ with $v''(0)=0$; 

\item[(ii).] If $\tau= \theta (q+1)$, then $v\in C^2[0, +\infty)$ with $v''(0)=C(\frac{g(a)}{\tau})^{\frac{1}{\theta}}$; 

\item[(iii).] If $\tau < \theta (q+1)$, then $v \in C^2(0, +\infty) \cap W_{loc}^{2, \delta}(0, +\infty)$, where $\delta \in (1, \frac{\theta}{\theta (q+1)-\tau})$.
\end{itemize}

(b). If the Cauchy problem \eqref{1.1} has a solution $v \in C^1[0, +\infty)\cap C^2(0, +\infty)$ satisfying the properties in cases (i), (ii) or (iii) in (a), with $v'(0)=0$, $v'(r)>0$ and $v''(r)>0$ for $r\in (0, +\infty)$ and for some initial value $a$, then the condition \eqref{KO} holds.
\end{Theorem}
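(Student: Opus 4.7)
\emph{Part (a) (existence under \eqref{1.3}).} My plan is to combine the local existence from Theorem \ref{Th 1.1} with a global a priori bound deduced from \eqref{1.3}. By Theorem \ref{Th 1.1} (applicable since $\tau > \theta q$), for an admissible initial value $a$ there is a local solution $v \in C^1[0,R_0)\cap C^2(0,R_0)$ satisfying $v'(0)=0$, $v', v''>0$ on $(0,R_0)$, and the appropriate regularity among (i)--(iii). Let $R^*\in(0,+\infty]$ be the maximal interval of existence; the aim is to show $R^*=+\infty$. I would derive a Keller--Osserman-type inequality
\[
v'(r) \le K\, r^{\alpha}\, G_\kappa(v(r))^{1/(\kappa\theta+1)}, \qquad G_\kappa(t):=\int_0^t g(s)^\kappa\,ds,
\]
for constants $K,\alpha$ depending on $C,\tau,\theta,q,\kappa,\epsilon$. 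The derivation proceeds via a H\"older-type decomposition of $J(r)=\int_0^r s^{\tau-1} g(v(s))\,ds$, writing $g(v(s)) = g(v(s))^{1-\lambda}\cdot g(v(s))^\lambda$ so that the first factor is absorbed into $g(v(r))^{1-\lambda}$ using monotonicity of $g\circ v$, while the second produces $G_\kappa(v(r))$ after a change of variable $t=v(s)$ combined with the ODE identity $v'(s) = Cs^{-q}J(s)^{1/\theta}$. The precise choice $\kappa=(1-\epsilon)/(\tau-\theta q-\epsilon)$ is forced by matching the H\"older-conjugate exponents against the factor $s^{\tau-1}$. Integrating the resulting inequality from $0$ to $R$ yields
\[
\int_a^{v(R)}\frac{dt}{G_\kappa(t)^{1/(\kappa\theta+1)}} \le K'\, R^{\alpha+1},
\]
and \eqref{1.3} forces the left side to diverge as $v(R)\to+\infty$; hence $v(R)$ remains finite for every finite $R$, giving $R^*=+\infty$. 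The regularity assertions (i)--(iii) are inherited from Theorem \ref{Th 1.1} on each finite subinterval.

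\emph{Part (b) (necessity of \eqref{KO}).} Suppose $v\in C^1[0,+\infty)\cap C^2(0,+\infty)$ is an entire solution with $v'(0)=0$ and $v',v''>0$. Strict convexity and monotonicity force $v(r)\to+\infty$ as $r\to+\infty$. Restricting $J(r)$ to $[r/2,r]$ and using monotonicity of $g\circ v$ yields the lower bound
\[
v'(r) \ge c_0\, r^{(\tau-\theta q)/\theta}\, g(v(r/2))^{1/\theta},
\]
from which, integrating over dyadic scales $r_n=2^n r_0$, one obtains a recursive estimate
\[
v(r_{n+1})-v(r_n) \ge c_1\, r_n^{(\tau+\theta-\theta q)/\theta}\, g(v(r_{n-1}))^{1/\theta}.
\]
A standard Keller--Osserman-type argument then converts failure of \eqref{KO} (i.e., $\int^{+\infty} G(t)^{-1/(\theta+1)}\,dt<+\infty$) into a contradiction: if that integral were finite then $g$ would have to grow fast enough that the recursion forces $v(r_n)$ to blow up at a finite $r_n$, which contradicts entire existence on $[0,+\infty)$. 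Hence \eqref{KO} must hold.

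\emph{Main obstacle.} The principal technical difficulty in part (a) is the H\"older decomposition producing the $G_\kappa$-based inequality: matching the exponent $\kappa$ with the H\"older conjugacy requires the precise form $\kappa=(1-\epsilon)/(\tau-\theta q-\epsilon)$ and the perturbation $\epsilon\in(0,\tau-\theta q)$ so that the powers of $r$ extracted from $s^{\tau-1}$ balance with the monotone bound on $g\circ v$. In part (b), translating the integrability failure of $G^{-1/(\theta+1)}$ into finite-time blow-up is delicate because the natural lower bound involves $g(v(r/2))$ rather than $g(v(r))$, so convexity of $v$ must be used to compare $v(r/2)$ with $v(r)$ (and $g$-growth deduced from \eqref{KO} failure must be propagated through the dyadic recursion).
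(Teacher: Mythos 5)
Your plan for the existence half is essentially the paper's argument: assume finite-time blow-up at $R$, use H\"older's inequality with the conjugate pair forced by $\kappa=\frac{1-\epsilon}{\tau-\theta q-\epsilon}$ to bound $v'$ by $G_\kappa(v(r))^{1/(\kappa\theta+1)}$ times a controlled factor, integrate, and contradict \eqref{1.3}. One caveat: the decomposition you describe ($g(v(s))=g(v(s))^{1-\lambda}g(v(s))^{\lambda}$ with the first factor pulled out as $g(v(r))^{1-\lambda}$) does not lead to $G_\kappa(v(r))$; a power of $g(v(r))$ left outside the integral is not the primitive $\int_0^{v(r)}g^{\kappa}$. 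The working version (the paper's) multiplies the differentiated identity $v''(v')^{\theta-1}+\frac{q}{r}(v')^{\theta}=\frac{C^{\theta}}{\theta}r^{\tau-\theta q-1}g(v)$ by $(v')^{1/\kappa}$ and applies H\"older to pair the singular weight $s^{\tau-\theta q-1}$ (exponent $\frac{1-\epsilon}{\theta q+1-\tau}$) against $g(v(s))^{\kappa}v'(s)$ (exponent $\kappa$); the change of variables $t=v(s)$ then produces $G_\kappa(v(r))^{1/\kappa}$. With that correction the rest of your part (a) goes through.

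\textbf{Part (b).} Here there is a genuine gap. Your lower bound $v'(r)\ge c_0\,r^{(\tau-\theta q)/\theta}g(v(r/2))^{1/\theta}$ is correct, but the dyadic recursion built from it cannot yield a contradiction. The recursion $v(r_{n+1})\ge v(r_n)+c_1 r_n^{(\tau+\theta-\theta q)/\theta}g(v(r_{n-1}))^{1/\theta}$ only forces $v(r_n)$ to grow very fast in $n$; it never makes $v(r_n)$ infinite at a finite index $n$, so it is perfectly compatible with entire existence. Moreover, failure of \eqref{KO} is a statement about the primitive $G(t)=\int_0^t g$, and it does not translate into a pointwise growth rate of $g$ strong enough to blow up such a recursion (e.g.\ $g(t)=t^{\theta}(\log t)^{\theta+1+\delta}$ violates \eqref{KO} yet keeps the recursion finite for all $n$). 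Your estimate discards exactly the structure that \eqref{KO} tests: one must multiply the differential inequality by $v'$ and integrate so that $\int g(v)v'\,ds=G(v(r))-G(v(R_0))$ appears. The paper's route is: (1) Lemma \ref{Lem 3.1} shows that if \eqref{KO} fails, the first-order term $\frac{q}{r}(v')^{\theta}$ is eventually dominated, so $v''(v')^{\theta-1}>\frac{C^{\theta}}{2\theta}r^{\tau-\theta q-1}g(v)$ for $r>R_0$; (2) multiplying by $v'$ and integrating (using that $s^{\tau-\theta q-1}$ is decreasing since $\tau<\theta q+1$) gives $(v'(r))^{\theta+1}\gtrsim r^{\tau-\theta q-1}\int_{v(R_0)}^{v(r)}g$; (3) a second integration shows $\int_{v(R_0)}^{v(r)}\bigl(\int_{v(R_0)}^{t}g\bigr)^{-1/(\theta+1)}dt\gtrsim r^{(\tau-\theta q+\theta)/(\theta+1)}\to+\infty$, while failure of \eqref{KO} bounds the left-hand side by a finite constant. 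You would need to replace the dyadic recursion by this double-integration argument (including the control of the $\frac{q}{r}(v')^{\theta}$ term, which your sketch also omits) for part (b) to stand.
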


Note that the conditions \eqref{KO} and \eqref{1.3} are equivalent in several important applications of Theorem \ref{Th 1.3}, see Corollaries \ref{Co 1.1} and \eqref{Co 1.2}. In such particular cases, Theorem \ref{Th 1.3} still provides necessary and sufficient conditions for the entire existence.

\begin{remark}\label{Remark 1.2}
The assumption of $\tau$ in Theorem \ref{Th 1.3} is $\theta q< \tau < q\theta+1$. 
When $\theta \ge 1$ in Theorem \ref{Th 1.3}, the $\tau$ in (ii), and the $\tau$ in (iii) satisfying $\tau>\theta q$, satisfy $\theta q<\tau < \theta q+1$ automatically. When $0<\theta <1$ in Theorem \ref{Th 1.3}, $(\theta (q+1), \theta q+1)$ is the proper interval for $\tau$ in (i). When $\theta>1$, under the assumption $\theta q<\tau < \theta q+1$, only case (iii) can occur. Therefore, Theorem \ref{Th 1.3} behaves differently in applications to the $0<\theta \le 1$ and $\theta > 1$ cases.
\end{remark}

The Cauchy problem, a cornerstone in the field of differential equations, has been extensively studied due to its wide applicability in various scientific disciplines, including physics, engineering, and economics. We recall some historical developments of the Cauchy problems with applications to the entire existence and nonexistence of partial differential equations. The problem \eqref{1.1} with $C=1$, $q=n-1$, $\tau=n$, $\theta=1$ and $g=f$ was studied in \cite{K1957} with applications in the semilinear elliptic equations with Laplacian operator
\begin{equation}\label{1.4}
\Delta u = f(u), \quad {\rm in} \ \mathbb{R}^n. 
\end{equation}
The condition \eqref{KO} with $g=f$ and $\theta =1$ is the well-known Keller-Osserman condition of \eqref{1.4}, which is a necessary and sufficient condition for the existence of entire solutions, see \cite{K1957, O1957}. For the Laplacian equation on the complete Riemannian manifold with Ricci curvature bounded from below by a constant, the convergence of the integral in \eqref{KO} with $g=f$ and $\theta =1$ implies the nonexistence of the $C^2$ solution in \cite{CY1975}. The Keller-Osserman conditions for \eqref{1.4} with the Laplacian operator replaced by $p$-Laplacian operator and $k$-Hessian operator were treated in \cite{NU1997} and \cite{JB2010}, respectively. In conjunction with the theory of  $k$-Hessian measures, the second author and X.-J. Wang introduced the $p$-$k$-Hessian operator $S_k[D(|Du|^{p-2}Du)]$ in \cite{TW1999}. The Keller-Osserman condition for \eqref{1.4} with the Laplacian operator was extended to $p$-$k$-Hessian operators in \cite{BF2022}.

The significance of our work lies in the unification of the general framework of the Cauchy problem \eqref{1.1}, which implies two more general classes of nonlinear partial differential equations in the entire Euclidean space $\mathbb{R}^n$. 
The results in \cite{K1957, O1957, NU1997, JB2010, BF2022} are special cases of Theorems \ref{Th 1.1} and \ref{Th 1.2}.
Apart from the applications to equations with the basic right hand side term $f(u)$, we allow the right hand side term also depend on the gradient $Du$ and the variable $x$. Our results are then applicable to degenerate or singular equations and those with more complex structures, thus broadening the scope of existing literature. Our main applications of Theorems \ref{Th 1.1} and \ref{Th 1.2} are the following degenerate or singular $k$-Hessian type equations
	\begin{equation}\label{1.5}
	S_k^{\frac{1}{k}} [D(|Du|^{p-2}Du)] = |x|^\alpha |Du|^\beta f(u), \quad {\rm in} \ \mathbb{R}^n,
	\end{equation}
and their counterparts, degenerate or singular $\Pi_k$-Hessian type equations
	\begin{equation}\label{1.6}
	\Pi_k^{\frac{1}{C_n^k}} [D(|Du|^{p-2}Du)] = |x|^\alpha |Du|^\beta f(u), \quad {\rm in} \ \mathbb{R}^n,
	\end{equation}
where $p>1$, $k=1, \cdots, n$, $\alpha$ and $\beta$ are given constants. The definitions of the operators $S_k$ and $\Pi_k$ together with the Keller-Osserman theorems for equations \eqref{1.5} and \eqref{1.6} are presented in Sections \ref{Section 4} and \ref{Section 5}, respectively.

\vspace{2mm}

The main features of this paper can be described as follows.

\vspace{1mm}

(i). By investigating the general Cauchy problem \eqref{1.1} in a uniform package, we are able to extend the theory for $k$-Hessian equations in \cite{K1957, O1957, NU1997, JB2010, BF2022} to the degenerate or singular $k$-Hessian type equations \eqref{1.5} with the right hand side term $|x|^\alpha |Du|^\beta f(u)$. The theory also applies to the completely new series of $\Pi_k$-Hessian type equations \eqref{1.6}, which includes the $(n-1)$-Monge-Amp\`ere case in \cite{JJL2024} as a special case. As far as we know, the only previously known result with such right hand side term is in \cite{FPR2010}, where the equation
\begin{equation}\label{1.7}
{\rm div} (|Du|^{p-2}Du)= |x|^\alpha |Du|^\beta f(u)
\end{equation}
is studied. Note that equation \eqref{1.7} is the particular case of equation \eqref{1.5} when $k=1$, and also the particular case of equation \eqref{1.6} when $k=n$. 

\vspace{1mm}

(ii). In \cite{JB2010} and \cite{BF2022}, equation \eqref{1.5} in the cases $p=2$, $\alpha=\beta=0$ and $p>2$, $\alpha=\beta=0$ was studied, respectively. As far as we know, there were no such results for the $1<p <2$ case of equation \eqref{1.5}. The results of this paper when $1<p<2$ are new.

\vspace{1mm}

(iii). The interval for the parameter $\theta$ in the existing literature was $[0, +\infty)$. For our general equations \eqref{1.5} and \eqref{1.6}, the restriction $\theta \in (0,1)$ occurs in many situations; see Remark \ref{Re 6.1}. For this reason, we prove the case when $\theta \in (0,1)$ in Step 2 of the proof of Theorem \ref{Th 1.1} via a different method from the case when $\theta \in [1, +\infty)$. Accordingly, the results of this paper in the case when $0<\theta <1$ are also new.

\vspace{1mm}

(iv). We use the assumption $\tau > \theta q$ for the local existence in Theorem \ref{Th 1.1} and the assumption $\tau \ge \theta q +1$ for global existence in Theorem \ref{Th 1.2}.  Then we also investigate the global existence of problem \eqref{1.1} when $\theta q<\tau < \theta q +1$ in Theorem \ref{Th 1.3}, which supplements the case when $-1<\alpha < \frac{1}{k} -1$ in Theorem \ref{Th 4.1} and the case when $-1< \alpha < \frac{k}{n}-1$ in Theorem \ref{Th 5.1}. It should be noted that these supplementary cases have not been considered before.

\vspace{1mm}

(v). When applying Theorems \ref{Th 1.2} and \ref{Th 1.3} to the $k$-Hessian type equations \eqref{1.5} and the $\Pi_k$-Hessian equations \eqref{1.6} with exponential nonlinearities and power type nonlinearities, condition \eqref{1.3} is equivalent to condition \eqref{KO}. Therefore, both Theorem  \ref{Th 1.2} and Theorem \ref{Th 1.3} provide necessary and sufficient conditions towards the entire existence of these equations with such nonlinearities, see Corollaries \ref{Co 1.1} and \ref{Co 1.2}.

\vspace{1mm}

This paper is organized as follows. In Section \ref{Section 2}, we prove the local existence and regularity of the Cauchy problem \eqref{1.1}. In Section \ref{Section 3},  we study the entire existence and nonexistence of the Cauchy problem \eqref{1.1}. We show that condition \eqref{KO} is a necessary and sufficient condition and prove Theorem \ref{Th 1.2}. In Sections \ref{Section 4} and \ref{Section 5}, we apply Theorem \ref{Th 1.2} to two types of nonlinear partial differential equations, $k$-Hessian type equations and $\Pi_k$-Hessian type equations, respectively.  In Section \ref{Section 5}, we make several final remarks. Some explicit examples of the functions $f$ for \eqref{1.5} and \eqref{1.6}, and the entire subsolutions of $\Pi_k$-Hessian type equations \eqref{1.6} are presented at the end.

\vspace{3mm}



\section{Local existence and regularity of the Cauchy problem}\label{Section 2}

\sloppy{}

In this section, we treat the Cauchy problem \eqref{1.1} and present the proof of Theorem \ref{Th 1.1}. The local existence of \eqref{1.1} is proved by Euler's method, and the regularity of the local solution of \eqref{1.1} is discussed for different ranges of $\tau$. At the end of this section, a further property of the local solution of \eqref{1.1} is also discussed when $\tau = \theta (q+\frac{1}{p-1})$ for some $p>1$.

\begin{proof}[Proof of Theorem \ref{Th 1.1}] We first construct the local solution $v$ by Euler's method, which is divided into three steps.

	\textit{Step 1: We define the piecewise linear function $\psi$}.
	We define a functional $F(\cdot, \cdot)$ on the rectangle
	\begin{equation*}
		\mathcal{R}:=[0,l]\times\{v \in C^0[0,l]: a\leq v <a+h\}
	\end{equation*}
	by
	\begin{equation}\label{F def}
		G(r,v):=C  r^{-q}  \left(  \int_{0}^{r} s^{\tau-1} g (v(s))ds \right )^{\frac{1}{\theta}},
	\end{equation}	
	where $l$ and $h$ are small positive constants.
	Then $v^{\prime}(r)=G(r,v(r))$
	and $G(r,v)>0$ for $r>0$.
	We define a piecewise linear function $\psi(\cdot)$ on $[0,l]$, by
	\begin{equation}\label{new aaa}
		\left\{
		\begin{aligned}
			&\psi(r)=a, \quad r_0\leq r\leq r_1,\\
			&\psi(r)=\psi(r_{i-1})+G\left( r_{i-1},\psi(r_{i-1})\right) (r-r_{i-1}), \quad r_{i-1} < r \leq r_i,\quad i=2,\cdots,m,
		\end{aligned}
		\right.
	\end{equation}
	where $0=r_0<r_1<\cdots<r_m=l$ and $m\in \mathbb{N}$.
		
	Then for sufficiently small $l$, we have 
	\begin{equation}\label{2.10}
	a \le \psi(r) < a+h, \quad r\in [0,l].
	\end{equation}
	To prove \eqref{2.10}, since $g$ is non-decreasing and $\tau > q\theta$, we have
	\begin{align}
	F(r, \psi) \le & C  r^{-q}  \left(  \int_{0}^{r} s^{\tau-1} ds \right )^{\frac{1}{\theta}}g^{\frac{1}{\theta}} (\psi(l)) \notag\\
		        = & C \left( \frac{1}{\tau} \right)^{\frac{1}{\theta}} r^{\frac{\tau}{\theta}-q} g^{\frac{1}{\theta}}(\psi(l))  \label{2.11} \\
	   	      \le & C \left( \frac{1}{\tau} \right)^{\frac{1}{\theta}} l^{\frac{\tau}{\theta}-q} g^{\frac{1}{\theta}}(\psi(l)) \notag
	\end{align}
	for $r\in [0,l]$.
	Hence from \eqref{new aaa} and \eqref{2.11}, we obtain
	\begin{equation*}
	a \le \psi(r) \le a + \max\limits_{0\le r\le l} G(r, \psi(r)) l \le a + C \left( \frac{1}{\tau} \right)^{\frac{1}{\theta}} l^{\frac{\tau}{\theta}+1-q} g^{\frac{1}{\theta}}(\psi(l)), \quad r\in [0,l],
	\end{equation*}
	so that if $h$ is fixed, we can choose $l$ sufficiently small such that \eqref{2.10} holds. Namely, $(r, \psi)\in \mathcal{R}$.

	\vspace{2mm}
	
	\textit{Step 2: We prove that $\psi$ is an $\varepsilon$-approximation solution of \eqref{1.1}}. To do this, we need to prove that for any small $\varepsilon > 0$, we can choose appropriate points $\{r_i\}_{i=1,\cdots,m}$, so that
	\begin{equation}\label{2.13}
		\left| \psi^{\prime}(r)-G(r,\psi(r))\right| <\varepsilon, \quad r\in [0,l].
	\end{equation}
	It follows from \eqref{F def} and $\tau>\theta q$ that
	$$\lim_{r\to 0}G(r,\psi(r))=0.$$
	Then for each $\varepsilon>0$, there exists $\bar r \in (0,l)$ such that for $0\leq r< \bar r$, we have
	$$G(r,\psi(r))<\varepsilon.$$
	We can choose $r_1=\bar r$, which gives
	\begin{equation}\label{2.14}
		\left| \frac{d\psi(r)}{dr}-G(r,\psi(r))\right| =\left| G(0, \psi(0))-G(r,\psi(r))\right| <\varepsilon,	\quad 0\le r< \bar r.
	\end{equation}
	
For $\bar r\leq r \le l$, without loss of generality, we assume $r_{i-1} \le r \le r_i$ for some $i=2,\cdots,m$.
We divide the discussion into two cases $\theta \ge 1$ and $0< \theta < 1$. 

	{\bf Case 1.} $\theta \ge 1$. In this case, we have
		\begin{align}
			&\left| \frac{d\psi(r)}{dr}-G(r,\psi(r))\right| \notag\\
			=&\left| G(r_{i-1},\psi (r_{i-1}))-G(r,\psi(r))\right| \notag\\
			=&C \left| 	r_{i-1}^{-q}\left( \int_{0}^{r_{i-1}}s^{\tau-1}g(v(s))ds\right)^\frac{1}{\theta}\!\!-\!	r^{-q}\left( \int_{0}^{r}s^{\tau-1}g(v(s))ds\right)^\frac{1}{\theta}\right| \label{2.15}\\
			\leq& C \left|  r_{i-1}^{-q\theta}\int_{0}^{r_{i-1}}s^{\tau-1}g(v(s))ds-r^{-q\theta}\int_{0}^{r}s^{\tau-1}g(v(s))ds\right|^\frac{1}{\theta}.\notag 
		\end{align}
Since $\int_{0}^{r}s^{\tau-1}g(v(s))ds = (\int_{0}^{r_{i-1}} + \int_{i-1}^{r})s^{\tau-1}g(v(s))ds$, we have from \eqref{2.15} that
		\begin{align}
			&\left| \frac{d\psi(r)}{dr}-G(r,\psi(r))\right| \notag\\
			\leq& C \left (  \left |r_{i-1}^{-q\theta}-r^{-q\theta}\right | \int_{0}^{r_{i-1}}s^{\tau-1}g(v(s))ds + r^{-q\theta}\int_{r_{i-1}}^{r}s^{\tau-1}g(v(s))ds\right )^\frac{1}{\theta} \notag \\
			 \leq& C g^{\frac{1}{\theta}}(a+h)\left (  \left |r_{i-1}^{-q\theta}-r^{-q\theta}\right | \int_{0}^{r_{i-1}}s^{\tau-1}ds + r^{-q\theta}\int_{r_{i-1}}^{r}s^{\tau-1}ds\right )^\frac{1}{\theta} \label{2.16} \\
			 \leq& C \left(\frac{1}{\tau}\right)^{\frac{1}{\theta}} g^{\frac{1}{\theta}}(a+h)\left ( l^\tau \left |r_{i-1}^{-q\theta}-r^{-q\theta}\right |  + {\bar r}^{-q\theta} |r^\tau - r_{i-1}^\tau | \right )^\frac{1}{\theta} \notag \\
			 \leq& C \left(\frac{1}{\tau}\right)^{\frac{1}{\theta}} g^{\frac{1}{\theta}}(a+h)\left ( l^\tau {\bar r}^{-q\theta -1}q\theta + \tau l^{\tau-1}{\bar r}^{-q\theta } \right )^{\frac{1}{\theta}} |r-r_{i-1}|^{\frac{1}{\theta}}, \notag 
		\end{align}
		where the mean value theorem is used in the last inequality.
	For the above $\varepsilon>0$, we take
	 \begin{equation}\label{2.17}
	 0<\delta_1 < \frac{\tau{\bar r}^{q\theta +1}}{l^{\tau-1}(q\theta l + \tau \bar r)g(a+h)}\left( \frac{\varepsilon}{C}\right)^\theta.
	 \end{equation}
	Then for $\max\limits_{2\leq i \leq m}\left|r-r_{i-1} \right| <\delta_1$, we have from \eqref{2.16} and \eqref{2.17} that
	 \begin{equation}\label{2.14'}
	 \left| \frac{d\psi(r)}{dr}-G(r,\psi(r))\right|<\varepsilon, \quad r\in [\bar r,l].
	 \end{equation}		
	 
	{\bf Case 2.} $0<\theta <1$. In this case, we have
	 \begin{align}
	  &\left| \frac{d\psi(r)}{dr}-G(r,\psi(r))\right| \notag\\
	=&\left| G(r_{i-1},\psi (r_{i-1}))-G(r,\psi(r))\right| \notag\\
	=&C \left| 	r_{i-1}^{-q}\left( \int_{0}^{r_{i-1}}s^{\tau-1}g(v(s))ds\right)^\frac{1}{\theta}\!\!-\!	r_{i-1}^{-q}\left( \int_{0}^{r}s^{\tau-1}g(v(s))ds\right)^\frac{1}{\theta}\right. \notag\\
	  & \left. + 	r_{i-1}^{-q}\left( \int_{0}^{r}s^{\tau-1}g(v(s))ds\right)^\frac{1}{\theta} - \!	r^{-q}\left( \int_{0}^{r}s^{\tau-1}g(v(s))ds\right)^\frac{1}{\theta} \right | \notag \\
	=&Cr_{i-1}^{-q}  \left| \left( \int_{0}^{r_{i-1}}s^{\tau-1}g(v(s))ds\right)^\frac{1}{\theta}\!\!-\!	\left( \int_{0}^{r}s^{\tau-1}g(v(s))ds\right)^\frac{1}{\theta}\right| \notag \\
	  & + C \left( \int_{0}^{r}s^{\tau-1}g(v(s))ds\right)^\frac{1}{\theta} \left | r_{i-1}^{-q} - r^{-q} \right |. \notag
	 \end{align}	
	 Using the mean value theorem, we have
	 \begin{align}
	     &\left| \frac{d\psi(r)}{dr}-G(r,\psi(r))\right| \notag\\
	\le & C {\bar r}^{-q} \frac{1}{\theta} \left( \int_{0}^{r}s^{\tau-1}g(v(s))ds\right)^{\frac{1}{\theta}-1} \int_{r_{i-1}}^{r}s^{\tau-1}g(v(s))ds \notag \\
	     & + Cq {\bar r}^{-q-1} \left( \int_{0}^{r}s^{\tau-1}g(v(s))ds\right)^\frac{1}{\theta} |r-r_{i-1}| \notag \\
	\le & C {\bar r}^{-q} \frac{1}{\theta} g^{\frac{1}{\theta}}(a+h) \left ( \frac{1}{\tau} \right )^{\frac{1}{\theta}-1} l^{\tau(\frac{1}{\theta}-1)}l^{\tau-1} |r-r_{i-1}| \label{2.18} \\
	     & + Cq {\bar r}^{-q-1} g^{\frac{1}{\theta}}(a+h)\left ( \frac{1}{\tau} \right )^{\frac{1}{\theta}} l^{\frac{\tau}{\theta}} |r-r_{i-1}| \notag \\
	   =&  C {\bar r}^{-q-1} \frac{1}{\theta} \left ( \frac{1}{\tau} \right )^{\frac{1}{\theta}} l^{\frac{\tau}{\theta}-1}  g^{\frac{1}{\theta}}(a+h) ( \tau \bar r + \theta q l ) |r-r_{i-1}|. \notag
	 \end{align}
	 For the above $\varepsilon>0$, we take
	 \begin{equation}\label{2.19}
	 0<\delta_2 < \frac{\theta {\bar r}^{q+1} \tau^{\frac{1}{\theta}} \varepsilon}{C l^{\frac{\tau}{\theta}-1}  g^{\frac{1}{\theta}}(a+h) ( \tau \bar r + \theta q l )}.
	 \end{equation}
	 Then for $\max\limits_{2\leq i \leq m}\left|r-r_{i-1} \right| <\delta_2$, we have from \eqref{2.18} and \eqref{2.19} that
	 \begin{equation}\label{2.14''}
	 \left| \frac{d\psi(r)}{dr}-G(r,\psi(r))\right|<\varepsilon, \quad r\in [\bar r,l].
	 \end{equation}	
	 
\vspace{2mm}
	
	Combining \eqref{2.14}, \eqref{2.14'} and \eqref{2.14''}, we obtain \eqref{2.13}, which implies that $\psi$ is an $\varepsilon$-approximation solution of \eqref{1.1}.
		
	\vspace{2mm}	
	\textit{Step 3: We deduce the solvability of \eqref{1.1} from our construction of $\psi$}.
	Let $\{\varepsilon_j\}_{j=1}^{\infty}$ be a sequence of positive constants converging to $0$. Assume that sequence $\{\psi_j\}$ is the $\varepsilon_j$-approximation solution of \eqref{1.1} on $[0,l]$, that is,
	\begin{equation}\label{2.20}
		\left| \frac{d\psi_j(r)}{dr}-G(r,\psi_j(r))\right| <\varepsilon_j, \quad r\in [0,l].
	\end{equation} 	
	It is easy to see that
	\begin{equation*}
		\left| \psi_j(r^{\prime \prime })-\psi_j(r^{\prime})\right| <\max_{1\leq i \leq m}G(r_i,\psi(r_i))|r^{\prime\prime}-r^{\prime}|\leq M|r^{\prime\prime}-r^{\prime}|,
	\end{equation*}
	for any $r^{\prime}, r^{\prime \prime } \in [0,l]$, whence
	 the sequence $\{\psi_j\}$ is equicontinuous and uniformly bounded.
	By the Arzel\` a-Ascoli theorem, there exists a uniformly convergent subsequence, which we still  denoted as $\{\psi_j\}$.
	Assume $\lim\limits_{j\to +\infty}\psi_j=v$. 
	Since $\psi_j$ is continuous in $[0, l]$, we know that $v$ is continuous in $[0, l]$.
	Since $\psi_j(0)=0$, we have $v(0)=0$. Setting
	\begin{equation}\label{new 1 in Sec 2}
		\triangle_j(r):=\frac{d\psi_j(r)}{dr}-G(r,\psi_j(r)),
	\end{equation}
	from \eqref{2.20}, we have
	\begin{equation}\label{new 2 in Sec 2}
	|\triangle_j(r)|<\varepsilon_j, \quad   r\in[0,l].
	\end{equation}
	Integrating \eqref{new 1 in Sec 2} and using \eqref{new 2 in Sec 2}, we let $j\to +\infty$ and obtain
	\begin{equation}\label{2.21}
		\begin{split}
			v(r)=\lim_{j\to +\infty}\psi_j(r)
			&=a+\lim_{j\to +\infty}\int_0^r\left[  G(s,\psi_j(s))+\triangle_j(s)\right] ds\\
			&=a+\int_0^r \lim_{j\to +\infty}\left[  G(s,\psi_j(s))+\triangle_j(s)\right] ds\\
			&=a+\int_0^r G(s,v(s))ds.
		\end{split}
	\end{equation}
	Since $v$ is continuous in $[0, l]$, by \eqref{2.21}, $v$ is continuously differentiable in $(0,l]$.
	Differentiating \eqref{2.21}, we can see that $v$ satisfies \eqref{1.1} on $[0,l]$.
	
	\vspace{2mm}
	
	From the above three steps, we get a solution $v$ of \eqref{1.1} on $[0,l]$.
	In fact, we can extend $[0,l]$ to a larger interval. Without loss of generality, we assume that there exists a maximum existence interval $[0,R)$ such that $v$ satisfies \eqref{1.1} on $[0,R)$ with $v(r)\to \infty$ as $r\to R$. Then $v \in C[0, R)\cap C^1(0, R)$.
	
	\vspace{2mm}
	Next, we will check the differentiability of $v$ at $0$ and the value of $v'(0)$. By l'H\^opital's rule,  we have from \eqref{1.1} that
	\begin{equation}\label{lim v(r)}
			\lim_{r\to 0}v^{\prime}(r)
			=C \left( \lim_{r\to 0} \frac{\int_{0}^{r}s^{\tau-1}g(v(s)) ds}{r^{\theta q}}  \right) ^\frac{1}{\theta}
			=C \left( \frac{g(a)}{\theta q} \lim_{r\to 0} r^{\tau-\theta q} \right)^{\frac{1}{\theta}}=0,
	\end{equation}
	where $v(0)=a$ and $\tau>\theta q$ are used. Using \eqref{2.21}, $v(0)=a$ and l'H\^opital's rule again, we have
	\begin{equation}\label{v'(0)}
	\lim_{r\to 0} \frac{v(r) - v(0)}{r-0} = \lim_{r\to 0} \frac{\int_{0}^r G(s, v(s))ds}{r} = \lim_{r\to 0} G(r, v(r))= \lim_{r\to 0}v^{\prime}(r)=0,
	\end{equation}
	where \eqref{lim v(r)} is used in the last equality. From \eqref{lim v(r)} and \eqref{v'(0)}, we see that $v$ is continuously differentiable at $0$ and $v'(0)=0$. The monotonicity of $v$ is obvious from \eqref{1.1} and the positivity of $g$. Namely that $v'(r)>0$ for $r\in (0, R)$.
	
	Actually, $v\in C^2(0, R)$ is obvious from the form of $v'(r)$ in \eqref{1.1}.
	Differentiating the equation in \eqref{1.1}, we get
	\begin{equation}\label{2diff}
	\begin{aligned}
			v^{\prime\prime}(r)
			 &  =  \frac{C}{\theta} g(v(r)) r^{-(q+1-\tau)} \left ( \int_{0}^{r} s^{\tau-1} g (v(s))ds\right)^{\frac{1}{\theta}-1} 
			          -Cq r^{-(q+1)} \left ( \int_{0}^{r} s^{\tau-1} g (v(s))ds\right)^{\frac{1}{\theta}} \\
			 & = C \left ( \int_{0}^{r} s^{\tau-1} g (v(s))ds\right)^{\frac{1}{\theta}-1} \left ( \frac{g(v(r))}{\theta r^{q+1-\tau}} - \frac{q \int_{0}^{r} s^{\tau-1} g (v(s))ds}{r^{q+1}} \right ),
	\end{aligned}
	\end{equation}	
	for $r\in (0, R)$. From the monotonicity of $v$ and $g$, we deduce from \eqref{2diff} that
	\begin{equation}\label{convexity}
	v''(r) \ge C \left(\frac{1}{\theta}-\frac{q}{\tau}\right) r^{\tau - (q+1)} g(v(r)) \left ( \int_{0}^{r} s^{\tau-1} g (v(s))ds\right)^{\frac{1}{\theta}-1} >0,
	\end{equation}
	where the positivity of $g$ and $\tau > \theta q$ is used in the last inequality.	
	
	Up to now, we have proved that $v \in C^1[0, R)\cap C^2(0, R)$ with $v'(0)=0$, $v'(r)>0$ and $v''(r)>0$ for $r\in (0, R)$.
	
	\vspace{2mm}

	Finally, we show the assertions (i), (ii) and (iii). By the definition of the second order derivative of $v$ at $0$, from \eqref{1.1} we have
	\begin{equation}\label{v''(0)}
	\begin{aligned}
	v''(0) & = \lim_{r\to 0} \frac{v'(r)-v'(0)}{r-0} = C \lim_{r\to 0} \frac{ \left(  \int_{0}^{r} s^{\tau-1} g (v(s))ds \right )^{\frac{1}{\theta}}}{r^{q+1}} \\
		& = C \left( \frac{g(a)}{\theta(q+1)} \right)^{\frac{1}{\theta}} \lim_{r\to 0} r^{\tau - \theta (q+1)}.
	\end{aligned}
	\end{equation}
	Using l'H\^opital's rule, we can calculate from \eqref{2diff} that 
	\begin{equation}\label{limv''}
	\begin{aligned}
	   & \lim_{r \to 0} \frac{v''(r)}{r^{\frac{1}{\theta}[\tau - \theta (q+1)]}} \\ 
	= & C \lim_{r \to 0} \left( \frac{\int_{0}^{r} s^{\tau-1} g (v(s))ds}{r^\tau}\right)^{\frac{1}{\theta} - 1} \left ( \frac{g(v(r))}{\theta} - \frac{q \int_{0}^{r} s^{\tau-1} g (v(s))ds}{r^{\tau}} \right ) \\
	= & C \left( \frac{1}{\theta} - \frac{q}{\tau} \right) \tau^{1-\frac{1}{\theta}} g^{\frac{1}{\theta}}(a).
	\end{aligned}
	\end{equation}
	
	(i). $\tau> \theta (q+1)$. From \eqref{v''(0)} and \eqref{limv''}, we have
	$v''(0) = \lim_{r\to 0} v''(r) =0$, which shows that $v\in C^2[0, R)$.
	
	(ii). $\tau = \theta (q+1)$. From \eqref{v''(0)}, we have
	$v''(0) = C \left( \frac{g(a)}{\tau} \right)^{\frac{1}{\theta}}$.
	From \eqref{limv''}, we have
	$
	\lim\limits_{r\to 0}v''(r) = C \left( \frac{\tau}{\theta} - q \right) \tau^{-\frac{1}{\theta}} g^{\frac{1}{\theta}}(a)=C \left( \frac{g(a)}{\tau} \right)^{\frac{1}{\theta}}.
	$
	Then we get $v\in C^2[0, R)$.
	
	(iii). $\theta q < \tau < \theta (q+1)$. For $\delta \in (1, \frac{\theta}{\theta(q+1)-\tau})$, we have $-1<\frac{\delta}{\theta}[\tau - \theta(q+1)]<0$. Then the integral $\int_0^R r^{\frac{\delta}{\theta}[\tau - \theta(q+1)]}dr$ converges. By \eqref{limv''}, $\lim\limits_{r \to 0} \frac{(v''(r))^\delta}{r^{\frac{\delta}{\theta}[\tau - \theta (q+1)]}} $ is equal to a positive constant. Then the integral $\int_0^R (v''(r))^\delta dr$ converges, which shows that $v \in C^2(0, R)\cap W^{2, \delta}(0, R)$.
	
	Now the proof of this lemma is completed.
	\end{proof}
\begin{remark}	
Note that a local solution of \eqref{1.1} exists for any initial value $a\in \mathbb{R}$. If $a$ is positive, the solution $v$ is positive for all $r\in [0, R)$.
\end{remark}

The following lemma is a further property of the local solution of \eqref{1.1} when $\tau = \theta (q+\frac{1}{p-1})$ for some $p>1$, and will be used in Sections \ref{Section 4} and \ref{Section 5}.
\begin{Lemma}\label{Lemma 2.2}
Assume that $v$ is the local solution of the Cauchy problem \eqref{1.1} under the assumptions of Theorem \ref{Th 1.1}. If $\tau = \theta (q+\frac{1}{p-1})$ for some $p>1$, then
\begin{equation}\label{2.24}
\lim_{r\to 0} \left[ (v'(r))^{p-1} \right] ' = \lim_{r\to 0} \frac{(v'(r))^{p-1}}{r} = C^{l-1} \left (\frac{g(a)}{\tau}\right)^{\frac{p-1}{\theta}}.
\end{equation}
\end{Lemma}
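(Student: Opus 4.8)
The plan is to work directly from the closed form of $(v'(r))^{p-1}$ provided by the equation in \eqref{1.1},
\[
(v'(r))^{p-1} = C^{p-1}\, r^{-q(p-1)} \Big( \int_0^r s^{\tau-1} g(v(s))\,ds \Big)^{\frac{p-1}{\theta}},
\]
and to exploit the algebraic identity $\tfrac{\tau}{\theta} - q = \tfrac{1}{p-1}$ hidden in the hypothesis $\tau = \theta(q+\tfrac{1}{p-1})$. This identity is what makes the two limits in \eqref{2.24} finite and nonzero, and it is essentially the only place the extra assumption enters.

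I would first treat the middle limit $\lim_{r\to 0}(v'(r))^{p-1}/r$. Since $v$ is continuous with $v(0)=a$ and $g$ is continuous, l'H\^opital's rule (or just the continuity of the integrand at $0$) gives $r^{-\tau}\int_0^r s^{\tau-1} g(v(s))\,ds \to g(a)/\tau$ as $r\to 0$. Dividing the displayed identity by $r$ and normalizing the integral by $r^{\tau}$, the prefactor becomes a power $r^{(p-1)(\tau/\theta-q)-1}$, whose exponent is $0$ by the hypothesis on $\tau$; letting $r\to 0$ yields the value $C^{p-1}\big(g(a)/\tau\big)^{(p-1)/\theta}$.

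For the first limit I would differentiate $w(r):=(v'(r))^{p-1}$ on $(0,R)$ — legitimate since $v\in C^2(0,R)$ and $v'>0$ there by Theorem \ref{Th 1.1} — using $w'=(p-1)(v')^{p-2}v''$ together with the formula \eqref{2diff} for $v''$; after factoring one gets
\[
w'(r) = C^{p-1}(p-1)\, r^{-q(p-1)-1}\, I(r)^{\frac{p-1}{\theta}-1}\Big( \tfrac{1}{\theta}\, r^{\tau} g(v(r)) - q\, I(r) \Big),
\]
where $I(r)=\int_0^r s^{\tau-1}g(v(s))\,ds$. Normalizing both occurrences of $I$ by $r^{\tau}$ as before, the total power of $r$ again collapses to $0$, the remaining bracket tends to $g(a)\big(\tfrac{1}{\theta}-\tfrac{q}{\tau}\big)$, and collecting constants gives $\lim_{r\to 0} w'(r) = C^{p-1}(p-1)\tau\big(g(a)/\tau\big)^{(p-1)/\theta}\big(\tfrac1\theta-\tfrac q\tau\big) = C^{p-1}\big(g(a)/\tau\big)^{(p-1)/\theta}$, using $(p-1)(\tau/\theta-q)=1$ once more. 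Alternatively, and more cheaply, once the middle limit is established and $w(0)=0$ (because $v'(0)=0$), the existence of $\lim_{r\to 0}w'(r)$ combined with l'H\^opital forces it to equal $\lim_{r\to 0}w(r)/r$, so no second constant computation is needed.

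The whole argument is bookkeeping with exponents. The only points requiring a word of care are that each use of l'H\^opital's rule is applied to a genuine $0/0$ (respectively finite$/0$ after normalization) form with nonvanishing denominator-derivative near $0$ — automatic here since the denominators are positive powers of $r$ — and the repeated cancellation $(p-1)(\tfrac{\tau}{\theta}-q)=1$. I do not expect any real obstacle beyond keeping this arithmetic straight.
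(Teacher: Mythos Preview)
Your proposal is correct and uses essentially the same ingredients as the paper's proof: both reduce to the asymptotic $r^{-\tau}\int_0^r s^{\tau-1}g(v(s))\,ds \to g(a)/\tau$ together with the exponent identity $(p-1)(\tfrac{\tau}{\theta}-q)=1$. The only cosmetic difference is the order---the paper first establishes $\lim_{r\to 0}[(v')^{p-1}]'$ from the separate asymptotics \eqref{2.25} and \eqref{limv''} and then invokes l'H\^opital to obtain the quotient limit, whereas you compute the quotient limit directly first (an alternative the paper itself notes in the remark following the lemma).
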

\begin{proof}
Using \eqref{1.1} and l'H\^opital's rule, we have
\begin{equation}\label{2.25}
\lim_{r\to 0} \frac{v'(r)}{r^{\frac{1}{\theta}(\tau - \theta q)}} = C \lim_{r\to 0} \left ( \frac{\int_0^r s^{\tau-1}g(v(s))ds}{r^\tau} \right )^{\frac{1}{\theta}} = C \left( \frac{g(a)}{\tau}\right )^{\frac{1}{\theta}}.
\end{equation}
By \eqref{limv''} and \eqref{2.25}, we have
\begin{equation}\label{2.26}
\begin{aligned}
    & \lim_{r\to 0} \left[ (v'(r))^{p-1} \right] '
=   \lim_{r\to 0} \left[(p-1)(v'(r))^{p-2}v''(r) \right] \\
= &  (p-1) \left[ C\left(\frac{g(a)}{\tau}\right)^{\frac{1}{\theta}}\right]^{p-2}C\left(\frac{1}{\theta} - \frac{q}{\tau}\right)\tau^{1-\frac{1}{\theta}}g^{\frac{1}{\theta}}(a)
\lim_{r\to 0} r^{\frac{p-1}{\theta}(\tau-\theta q)-1} \\
= & C^{p-1} \left( \frac{g(a)}{\tau} \right)^{\frac{p-1}{\theta}} \frac{p-1}{\theta} (\tau-\theta q) \lim_{r\to 0}r^{\frac{p-1}{\theta}(\tau-\theta q)-1}\\
= & C^{p-1} \left (\frac{g(a)}{\tau}\right)^{\frac{p-1}{\theta}},
\end{aligned}
\end{equation}
where $\tau = \theta(q+\frac{1}{p-1})$ is used to get the last equality. By using l'H\^opital's rule and \eqref{2.26}, we have
\begin{equation}\label{2.27}
\lim_{r\to 0} \frac{(v'(r))^{p-1}}{r} = \lim_{r\to 0} \left[ (v'(r))^{p-1} \right] ' = C^{p-1} \left (\frac{g(a)}{\tau}\right)^{\frac{p-1}{\theta}},
\end{equation}
which completes the proof of \eqref{2.24}.
\end{proof}

\begin{remark}
In order to derive $\lim_{r\to 0} \frac{(v'(r))^{p-1}}{r}  = C^{p-1} \left (\frac{g(a)}{\tau}\right)^{\frac{p-1}{\theta}}$ in \eqref{2.27}, if we do not use l'H\^opital's rule directly, alternatively, we can also calculate it from \eqref{1.1} under the condition $\tau = \theta(q+\frac{1}{p-1})$.
\end{remark}


\vspace{3mm}

\section{Entire existence and nonexistence of the Cauchy problem}\label{Section 3}

\sloppy{}

In this section, we show that the Keller-Osserman condition \eqref{KO} is a necessary and sufficient condition for the existence of entire solutions of Cauchy problem \eqref{1.1} in the case when $\tau \ge \theta q+1$. In the case when $\theta q < \tau < \theta q +1$, we prove that conditions \eqref{KO} and \eqref{1.3} are necessary and sufficient condition respectively, for the entire existence of problem \eqref{1.1}.

For $v(r)$ satisfying \eqref{1.1}, a direct calculation leads to
\begin{equation}\label{3.1}
v^{\prime\prime}(r)\left( v^{\prime}(r)\right)^{\theta -1}+\frac{q}{r}\left( v^{\prime}(r)\right)^{\theta}
=\frac{C^\theta}{\theta} r^{\tau - q \theta -1} g(v(r)).
\end{equation}
Note that the Keller-Osserman condition of \eqref{1.1} is hidden in the relationship between the first  and last terms in \eqref{3.1}, which will be revealed in the proof.

Before proving Theorems \ref{Th 1.2} and \ref{Th 1.3}, we prove the following lemma, which will be useful in the proof of the necessity of Keller-Osserman condition \eqref{KO} in Theorems \ref{Th 1.2} and \ref{Th 1.3}. In this lemma, $\tau$ is only assumed to satisfy $\tau > \max \{\theta (q-1), 0\}$. 
\begin{Lemma}\label{Lem 3.1}
Assume that $g: \mathbb{R} \to (0,+\infty)$ is a non-decreasing continuous function, the constants $q\ge 0$, $\theta> 0$, $C>0$ and $\tau > \max \{\theta (q-1), 0\}$, and $v$ is a solution of the Cauchy problem \eqref{1.1} with $v'(0)=0$ for all $r\ge 0$ and for some initial value $a$.
If condition \eqref{KO} does not hold, then there exists a large positive constant $R_0$ such that
\begin{equation}\label{claim}
v^{\prime\prime}(r)\left( v^{\prime}(r)\right)^{\theta -1} > \frac{C^\theta}{2\theta} r^{\tau - q \theta -1} g(v(r)),
\end{equation}
for $r>R_0$.
\end{Lemma}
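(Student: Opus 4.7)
First, I would rewrite the target inequality in a more tractable form. By (3.1), the claim $v''(v')^{\theta-1}>\frac{C^\theta}{2\theta}r^{\tau-q\theta-1}g(v)$ is equivalent to $\frac{q}{r}(v')^{\theta}<\frac{C^\theta}{2\theta}r^{\tau-q\theta-1}g(v)$. For $q=0$ this is immediate, so assume $q>0$. Substituting $(v'(r))^\theta = C^\theta r^{-q\theta}\int_0^r s^{\tau-1}g(v(s))\,ds$ from (1.1) and then changing variable to $u=s/r$, the claim reduces to
\[
\rho(r):=\int_0^1 u^{\tau-1}\,\frac{g(v(ur))}{g(v(r))}\,du<\frac{1}{2\theta q}.
\]
Since $g$ is non-decreasing, $\rho(r)\le 1/\tau$ holds automatically, giving the claim trivially when $\tau\ge 2\theta q$. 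In what follows I therefore assume $\tau<2\theta q$.

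Next, I would establish two preparatory estimates. The lower bound $v'(r)\ge C(g(a)/\tau)^{1/\theta}r^{(\tau-q\theta)/\theta}$, obtained by using $g(v)\ge g(a)$ in (1.1), is non-integrable at infinity because $\tau>\theta(q-1)$, so $v(r)\to\infty$. Then, multiplying (3.1) by $(\theta+1)v'(r)$, using $g(v)v'=(G(v))'$ with $G(t)=\int_0^t g(s)\,ds$, applying the integrating factor $r^{(\theta+1)q}$, and integrating by parts (discarding a non-positive remainder when $\tau+q>1$) yields
\[
(v'(r))^{\theta+1}\le\frac{(\theta+1)C^\theta}{\theta}\,r^{\tau-q\theta-1}G(v(r)).
\]

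The main technical step is to convert failure of (KO) into super-polynomial growth of $v$. Failure of (KO) forces $g(t)/t^\theta\to\infty$, since otherwise $G(t)\lesssim t^{\theta+1}$ would make the integral in (KO) diverge. Bootstrapping the crude lower bound $v(r)\ge c_0 r^{\gamma_0}$ with $\gamma_0=(\tau-q\theta+\theta)/\theta$, via the iteration built on $v'(r)\gtrsim r^{(\tau-q\theta)/\theta}g(v(r/2))^{1/\theta}$ and the above upper bound, produces a sequence of polynomial exponents diverging to infinity, so $v(r)$ eventually exceeds every polynomial. Combining monotonicity of $g$ with this super-polynomial growth of $v$ then yields $g(v(\lambda r))/g(v(r))\to 0$ as $r\to\infty$ for every fixed $\lambda\in(0,1)$.

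Finally, I would split the integral defining $\rho$ at $u=\lambda$:
\[
\rho(r)\le\frac{\lambda^\tau}{\tau}\cdot\frac{g(v(\lambda r))}{g(v(r))}+\frac{1-\lambda^\tau}{\tau},
\]
and choose $\lambda\in(0,1)$ with $\lambda^\tau>1-\tau/(2\theta q)$, which is possible precisely because $\tau<2\theta q$. For $r$ larger than some $R_0$, the first term is arbitrarily small by the previous paragraph, so $\rho(r)<1/(2\theta q)$, which is the desired inequality. The principal obstacle is the super-polynomial growth step and the ensuing ratio decay $g(v(\lambda r))/g(v(r))\to 0$: making these quantitative from the integral failure of (KO) alone, for a general non-decreasing continuous $g$ rather than for concrete power-type $g$, is the real technical heart of the argument.
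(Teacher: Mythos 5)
Your proposal shares the paper's two preparatory facts (that failure of \eqref{KO} forces $g(t)/t^\theta\to\infty$, hence eventually $g^{1/\theta}(v(r))>v(r)-a$; and that $(v'(r))^{\theta+1}$ is bounded above by a constant times $r^{\tau-q\theta-1}$ times a primitive of $g$ evaluated at $v(r)$), but then takes a detour that leaves a genuine gap. Your finish rests on showing $g(v(\lambda r))/g(v(r))\to 0$ for a fixed $\lambda\in(0,1)$. Super-polynomial growth of $v$ alone does not give this: for a general non-decreasing continuous $g$ you have no upper bound on $g(v(\lambda r))$ in terms of $v(\lambda r)$, only the lower bound $g(t)\gtrsim t^\theta$ coming from the failure of \eqref{KO}, and monotonicity merely gives $g(v(\lambda r))\le g(v(r))$. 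Nothing you cite rules out $g$ being very flat across the window $[v(\lambda r_j),v(r_j)]$ along some sequence $r_j\to\infty$, in which case the ratio stays bounded below. You correctly flag this as "the real technical heart," but that means the proof is unfinished precisely at the point where the argument must close.

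The paper avoids this entirely with a direct algebraic step you missed. Once one has, for $r$ large,
\[
(v'(r))^{\theta+1}<\frac{\theta+1}{\theta}\,C^\theta\, r^{\tau-q\theta-1}\,g^{1+\frac{1}{\theta}}(v(r))
\]
(obtained from the integrated inequality together with $v(r)-a<g^{1/\theta}(v(r))$; your version with $G(v(r))$ reduces to this via $G(t)\le t\,g(t)<\left(g^{1/\theta}(t)+a\right)g(t)$ for large $t$), one simply raises both sides to the power $\theta/(\theta+1)$. Since $\left(1+\tfrac1\theta\right)\cdot\tfrac{\theta}{\theta+1}=1$, this yields
\[
(v'(r))^{\theta}<\left(\frac{\theta+1}{\theta}\,C^\theta\right)^{\frac{\theta}{\theta+1}} r^{\frac{\theta(\tau-q\theta-1)}{\theta+1}}\,g(v(r)),
\]
whence
\[
\frac{q}{r}(v'(r))^\theta<\left[q\theta\left(\frac{\theta+1}{C\theta}\right)^{\frac{\theta}{\theta+1}} r^{-\frac{\tau-q\theta+\theta}{\theta+1}}\right]\frac{C^\theta}{\theta}\, r^{\tau-q\theta-1}\,g(v(r)).
\]
Because $\tau>\theta(q-1)$ gives $\tau-q\theta+\theta>0$, the bracket tends to $0$ as $r\to\infty$, so it is below $\tfrac12$ for $r>R_0$, and substituting back into identity \eqref{3.1} gives \eqref{claim} immediately. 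This one-line reduction replaces your entire super-polynomial bootstrap and the ratio-decay claim; it also sidesteps the need to analyse $\rho(r)$ or to separate the cases $\tau\ge2\theta q$ and $\tau<2\theta q$.
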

\begin{proof}[Proof of Lemma \ref{Lem 3.1}]
Letting
$\tilde g(t)=\left( \int_{0}^{t}g (s)ds\right)^{-\frac{1}{\theta+1}}$, since condition \eqref{KO} deos not hold, we have  $\int^{\infty}\tilde g(t) dt<\infty$. Hence, $\int_s^{\infty}\tilde g(t)dt\to 0$ as $s\to \infty$. In addition, by observing that $\tilde g(t)$ is decreasing in $(0, \infty)$, we have
\begin{equation*}
	0<t \tilde g(t)\leq 2\int_{\frac{t}{2}}^{t}\tilde g(s)ds
	<2\int_{\frac{t}{2}}^{\infty}\tilde g(s)ds \to 0, \quad {\rm as} \quad t\to \infty,
\end{equation*}
which shows that
\begin{equation*}
	t \tilde g(t)=t \left( \int_{0}^{t}g (s)ds\right)^{-\frac{1}{\theta+1}}\to 0,\quad {\rm as} \quad t\to \infty.
\end{equation*}
Combining this with the fact that $g$ is non-decreasing, we have
\begin{equation*}
	\left(\frac{ g^{\frac{1}{\theta}}(t)}{t} \right)^\theta =\frac{t g(t)}{t^{\theta+1}}
	\geq\frac{\int_{0}^{t}g(s)ds}{t^{\theta+1}}=(t\tilde g(t))^{-(\theta+1)}\to \infty, \quad {\rm as} \quad t\to \infty.
\end{equation*}
Therefore there exists a constant $t_1 > a$ such that 	
\begin{equation}\label{3.6'}
	g^{\frac{1}{\theta}}(t)>t-a,\quad \forall \,t>t_1.
\end{equation}
Since $v^{\prime}(r)>0$ for $r>0$, we have $v(r)>v(0)=a$ for $r>0$.
Since $g$ is non-decreasing and $v(0)=a$, we have from \eqref{1.1} that 
$$v^\prime(r) \ge Cr^{-q}g^{\frac{1}{\theta}}(a)\left (\int_0^r s^{\tau-1} ds \right)^{\frac{1}{\theta}} = C\left (\frac{1}{\tau}\right)^{\frac{1}{\theta}}r^{\frac{\tau}{\theta}-q} g^{\frac{1}{\theta}}(a).$$
Integrating from $0$ to $r$ and using $v(0)=a$ again, we have
$$v(r)\geq C\left (\frac{1}{\tau}\right)^{\frac{1}{\theta}} \frac{\theta}{\tau - q \theta + \theta}g^{\frac{1}{\theta}}(a) r^{\frac{\tau}{\theta}-q+1}  +a, \quad   r>0.$$
Since $\tau > \max \{\theta (q-1), 0\}$, from the above inequality, there exists $\tilde r>0$ such that
$$v(r)>t_1, \quad {\forall} r>\tilde r.$$
Therefore,  from \eqref{3.6'}, we have
\begin{equation}\label{3.10}
	g^{\frac{1}{\theta}}(v(r))>v(r)-a, \quad   r>\tilde r.
\end{equation}
Since $v'(r)>0$ for $r>0$, from \eqref{3.1}, we have
\begin{equation*}
v^{\prime\prime}(r)\left( v^{\prime}(r)\right)^{\theta - 1}
\le \frac{1}{\theta} C^\theta r^{\tau - q\theta -1} g(v(r)), \quad r>0.
\end{equation*}
Multiplying this by $(\theta+1)v'(r)$, we have
\begin{equation*}
(\theta+1)v^{\prime\prime}(r)\left( v^{\prime}(r)\right)^{\theta}
\le \frac{\theta + 1}{\theta} C^\theta r^{\tau - q\theta -1} g(v(r)) v^{\prime}(r), \quad r>0.
\end{equation*}
Integrating this from $0$ to $r(>\tilde r)$, we get
\begin{equation}\label{3.11}
\begin{aligned}
		\left(v^{\prime}(r)\right)^{\theta + 1} & \le \frac{\theta + 1}{\theta} C^\theta r^{\tau - q\theta -1} g(v(r))(v(r)-a) \\
		& < \frac{\theta + 1}{\theta} C^\theta r^{\tau - q\theta -1} g^{1+\frac{1}{\theta}}(v(r)), \quad r> \tilde r,
\end{aligned}
\end{equation}
where $v^\prime(0)=0$ and the monotonicity of $g$ are used in the first inequality, and \eqref{3.10} is used in the second inequality.
By direct calculation, we obtain from \eqref{3.11} that
\begin{equation}\label{3.12}
\frac{q}{r}\left( v^{\prime}(r)\right)^{\theta}< \left[ q \theta \left (\frac{\theta+1}{C\theta} \right )^{\frac{\theta}{\theta+1}}r^{-\frac{\tau - q\theta + \theta}{\theta+1}}\right ]\frac{C^\theta}{\theta} r^{\tau - q \theta -1} g(v(r)).
\end{equation}
Since $\tau > \max \{\theta (q-1), 0\}$, we can fix a large positive constant $R_0 > \tilde r$ such that
\begin{equation}\label{3.13}
q \theta \left (\frac{\theta+1}{C\theta} \right )^{\frac{\theta}{\theta+1}}r^{-\frac{\tau - q\theta + \theta}{\theta+1}} < \frac{1}{2}, \quad r>R_0.
\end{equation}
Then we have from \eqref{3.12} and \eqref{3.13} that 
\begin{equation}\label{3.14}
\frac{q}{r}\left( v^{\prime}(r)\right)^{\theta}< \frac{C^\theta}{2\theta} r^{\tau - q \theta -1} g(v(r)),\quad r>R_0.
\end{equation}
Inserting \eqref{3.14} into \eqref{3.1}, we then get \eqref{claim}.
\end{proof}

\begin{remark}
Note that in Lemma \ref{Lem 3.1}, $\frac{1}{2}$ on the right hand side of \eqref{claim} can be replaced by any constant $\xi \in (0,1)$. Precisely, under the assumptions of Lemma \ref{Lem 3.1}, if condition \eqref{KO} does not hold, for any given constant $\xi \in (0,1)$, there exists a large positive constant $\tilde R_0$ such that
\begin{equation}\label{claim'}
v^{\prime\prime}(r)\left( v^{\prime}(r)\right)^{\theta -1} > \xi \frac{C^\theta}{\theta} r^{\tau - q \theta -1} g(v(r)),
\end{equation}
for $r>\tilde R_0$.
\end{remark}

We are ready to present the proof of Theorem \ref{Th 1.2}. 
\begin{proof}[Proof of Theorem \ref{Th 1.2}]
We first prove the sufficiency of condition \eqref{KO}. Suppose that there does not exist an entire solution of the Cauchy problem \eqref{1.1} for all $r\in [0, +\infty)$. By Theorem \ref{Th 1.1}, the solution $v$ of \eqref{1.1} with $v(0)=a\ge 0$ and $v'(0)=0$ has a maximal existence interval $[0, R)$ for some $R>0$. Namely, $v(r)\rightarrow +\infty$ as $r\rightarrow R$. Since $v^{\prime}(r)>0$ for $r>0$, from \eqref{3.1}, we obtain that
\begin{equation}\label{3.2}
v^{\prime\prime}(r)\left( v^{\prime}(r)\right)^{\theta-1}
\le \frac{C^\theta}{\theta}r^{\tau-q\theta -1} g(v(r))
< \frac{C^\theta}{\theta}R^{\tau-q\theta -1} g(v(r)), \quad 0<r<R,
\end{equation}
where $\tau \ge \theta q+1$ is used in the last inequality.
Multiplying both sides of \eqref{3.2} by $(\theta+1)v^{\prime}(r)$ and integrating it from $0$ to $r$, we get
\begin{equation*}
	\left( 	v^{\prime}(r)\right) ^{\theta +1}<\frac{\theta +1}{\theta} C^\theta R^{\tau-q\theta -1} \int_{a}^{v(r)}g (s)ds,
\end{equation*}
where $v(0)=a$ and $v^{\prime}(0)=0$ are used.
Consequently, we have
\begin{equation}\label{3.3}
\left( \int_{a}^{v(r)}g(s)ds\right) ^{-\frac{1}{\theta+1}}v^{\prime}(r)< \left [ \frac{\theta +1}{\theta} C^\theta R^{\tau-q\theta -1} \right]^{\frac{1}{\theta+1}}.
\end{equation}
Integrating \eqref{3.3}  from $0$ to $R$ and and using the fact that $v(0)=a$ and $v(R)=+\infty$, we get
\begin{equation*}
	\int_{a}^{\infty}\left( \int_{a}^{v}g(s)ds\right) ^{-\frac{1}{\theta+1}}dv<\left [ \frac{\theta +1}{\theta} C^\theta R^{\tau-q\theta -1} \right]^{\frac{1}{\theta+1}}R<\infty,
\end{equation*}
which contradicts with the Keller-Osserman condition \eqref{KO} when $a\ge 0$. Therefore, under the condition \eqref{KO}, the Cauchy problem \eqref{1.1} has an entire solution $v$ with $v'(0)=0$ for all $r\ge 0$ and for nonnegative initial value $a$.

\vspace{2mm}

Next, we prove the necessity of condition \eqref{KO}. Suppose on the contrary that
\begin{equation}\label{3.4}
	\int^{\infty}\left( \int_{0}^{t}g(s)ds\right)^{-\frac{1}{\theta+1}}dt<\infty.
\end{equation}
Since $\tau \ge \theta q+1 > \max \{\theta (q-1), 0\}$, from Lemma \ref{Lem 3.1}, the inequality \eqref{claim} holds under \eqref{3.4}. Multiplying both sides of \eqref{claim} by $v^\prime(r)$ and integrating from $R_0$ to $r (>R_0)$, since $\tau \ge \theta q+1$, we have
\begin{align*}
	(v^\prime(r) )^{\theta+1}>&\frac{\theta +1}{2\theta} C^\theta R_0^{\tau -q\theta -1} \int_{v(R_0)}^{v(r)}  g(s)ds + (v^\prime(R_0) )^{\theta+1}\\
	>&\frac{\theta +1}{2\theta} C^\theta R_0^{\tau -q\theta -1} \int_{v(R_0)}^{v(r)}  g(s)ds, \quad r>R_0,
\end{align*}
that is
\begin{equation}\label{3.8}
	\left( \int_{v(R_0)}^{v(r)}g(s)ds\right)^{-\frac{1}{\theta+1}} v'(r)>\left( \frac{\theta +1}{2\theta} C^\theta R_0^{\tau -q\theta -1} \right)^\frac{1}{\theta+1}, \quad r>R_0.
\end{equation} 
Without loss of generality, we can regard $v(R_0)$ as a positive constant. Otherwise, the proof is trivial. 
Integrating \eqref{3.8} again from $R_0$ to $r(>R_0)$, we have
\begin{align}
		& \left( \frac{\theta +1}{2\theta} C^\theta R_0^{\tau -q\theta -1} \right)^\frac{1}{\theta+1} (r-R_0) \notag\\
		< & \int_{v(R_0)}^{v(r)}\left( \int_{v(R_0)}^{t}g(s)ds\right)^{-\frac{1}{\theta+1}}dt \notag\\
		 \le & \left(\int_{v(R_0)}^{2v(R_0)} + \int_{2v(R_0)}^{\infty} \right)\left( \int_{v(R_0)}^{t}g(s)ds\right) ^{-\frac{1}{\theta+1}}dt \label{3.9}\\
		 \le & v^{\frac{\theta}{\theta+1}}(R_0) g^{-\frac{1}{\theta+1}}(v(R_0))+ \int_{2v(R_0)}^{\infty}\left( \int_{0}^{\frac{t}{2}}g(s)ds\right) ^{-\frac{1}{\theta+1}}dt \notag \\
		 = & v^{\frac{\theta}{\theta+1}}(R_0) g^{-\frac{1}{\theta+1}}(v(R_0)) + 2 \int_{v(R_0)}^{\infty}\left( \int_{0}^{t}g(s)ds\right) ^{-\frac{1}{\theta+1}}dt,\notag
\end{align}
where the non-decreasing property of $g$ is used in the last inequality.
Since $r$ in \eqref{3.9} can be arbitrarily large, \eqref{3.9} contradicts with \eqref{3.4}. Thus, the proof of necessity is complete.
\end{proof}

\begin{remark}
Theorem \ref{Th 1.2} has been applied to prove the nonexistence of global radial solution (with separable variables) of a parabolic $k$-Hessian equation
$$
-u_t [S_k(D^2u)]^\alpha =1, \quad {\rm \mathbb{R}^{n+1}_{-}},
$$
for $-\frac{1}{k}<\alpha<0$; see Theorem 1.1(2) in \cite{CJ2026}.
\end{remark}

Finally, we prove Theorem \ref{Th 1.3}.
\begin{proof}[Proof of Theorem \ref{Th 1.3}]
We first prove the sufficiency of condition \eqref{1.3}. Suppose that there does not exist an entire solution of the Cauchy problem \eqref{1.1} for all $r\in [0, +\infty)$. By Theorem \ref{Th 1.1}, the solution $v$ of \eqref{1.1} with $v(0)=a\ge 0$ and $v'(0)=0$ has a maximal existence interval $[0, R)$ for some $R>0$. Namely, $v(r)\rightarrow +\infty$ as $r\rightarrow R$. Since $v^{\prime}(r)>0$ for $r>0$, from \eqref{3.1}, we obtain that
\begin{equation}\label{3.15}
v^{\prime\prime}(r)\left( v^{\prime}(r)\right)^{\theta-1}
\le \frac{C^\theta}{\theta}r^{\tau-q\theta -1} g(v(r)), \quad 0<r<R.
\end{equation}
Since $\theta q<\tau < \theta q+1$, we have $-1<\tau - \theta q -1 <0$.
Multiplying both sides of \eqref{3.15} by $(\theta+\frac{1}{\kappa})(v^{\prime}(r))^{\theta +\frac{1}{\kappa}}$ and integrating it from $0$ to $r (<R)$, we get
\begin{equation}\label{3.16}
\begin{aligned}
	    \left( 	v^{\prime}(r)\right) ^{\theta +\frac{1}{\kappa}} 
	<   & \frac{\theta +\frac{1}{\kappa}}{\theta} C^\theta \int_{0}^{r} s^{\tau-\theta q -1} g (v(s)) (v'(s))^{\frac{1}{\kappa}}ds\\
	\le & \frac{\theta +\frac{1}{\kappa}}{\theta} C^\theta \left (\int_0^r \frac{1}{s^{1-\epsilon}} ds\right)^{\frac{\theta q +1-\tau}{1-\epsilon}}
	\left( \int_a^{v(r)} g^{\kappa} (s)ds\right)^{\frac{1}{\kappa}} \\
	\le & \frac{\theta +\frac{1}{\kappa}}{\theta} C^\theta \left (\frac{R^\epsilon}{\epsilon} \right)^{\frac{\theta q +1-\tau}{1-\epsilon}}
	\left( \int_a^{v(r)} g^{\kappa} (s)ds\right)^{\frac{1}{\kappa}},
\end{aligned}
\end{equation}
where $v^{\prime}(0)=0$ is used in the first inequality, H\"older's inequality is used in the second inequality, $\kappa$ and $\epsilon$ are the same constants as those in condition \eqref{1.3}. From \eqref{3.16}, we get
\begin{equation}\label{3.17}
\left( \int_a^{v(r)} g^{\kappa} (s)ds\right)^{-\frac{1}{1+\kappa \theta}} v'(r) \le \bar C,
\end{equation}
where 
$$
\bar C = \left [ \frac{\theta +\frac{1}{\kappa}}{\theta} C^\theta \left (\frac{R^\epsilon}{\epsilon} \right)^{\frac{\theta q +1-\tau}{1-\epsilon}}  \right ]^{\frac{1}{\theta + \frac{1}{\kappa}}}.
$$
Integrating \eqref{3.17}  from $0$ to $R$ and and using the fact that $v(0)=a$ and $v(R)=+\infty$, we get
\begin{equation*}
	\int_{a}^{\infty}\left( \int_{a}^{v}g^{\kappa}(s)ds\right) ^{-\frac{1}{\kappa \theta+1}}dv<\bar C R<+\infty,
\end{equation*}
which contradicts with the Keller-Osserman condition \eqref{1.3} when $a\ge 0$. Therefore, under the condition \eqref{1.3}, the Cauchy problem \eqref{1.1} has an entire solution $v$ with $v'(0)=0$ for all $r\ge 0$ and for nonnegative initial value $a$.

Next, we prove the necessity of condition \eqref{KO}. Suppose on the contrary that \eqref{3.4} holds.
Since $\tau > \theta q > \max \{\theta (q-1), 0\}$, from Lemma \ref{Lem 3.1}, the inequality \eqref{claim} holds under \eqref{3.4}. Multiplying both sides of \eqref{claim} by $v^\prime(r)$ and integrating from $R_0$ to $r (>R_0)$, since $-1<\tau - \theta q -1 <0$, we have
\begin{equation}\label{3.18}
\begin{aligned}
	(v^\prime(r) )^{\theta+1}>&\frac{\theta +1}{2\theta} C^\theta \int_{R_0}^{r} s^{\tau -q\theta -1} g(v(s))v'(s)ds + (v^\prime(R_0) )^{\theta+1}\\
	>&\frac{\theta +1}{2\theta} C^\theta r^{\tau -q\theta -1} \int_{v(R_0)}^{v(r)}  g(s)ds, \quad r>R_0,
\end{aligned}
\end{equation}
where $v'(R_0)>0$ is used in the second inequality.
From \eqref{3.18}, we have
\begin{equation}\label{3.19}
	\left( \int_{v(R_0)}^{v(r)}g(s)ds\right)^{-\frac{1}{\theta+1}} v'(r)>
	\left (\frac{\theta +1}{2\theta} C^\theta\right)^{\frac{1}{\theta +1}} r^{\frac{\tau -q\theta -1}{\theta +1}}, 
	\quad r>R_0.
\end{equation} 
Without loss of generality, we can regard $v(R_0)$ as a positive constant. Otherwise, the proof is trivial. 
Integrating \eqref{3.19} again from $R_0$ to $r(>R_0)$, \eqref{3.9} holds with the first line replaced by
$$
\left ( \frac{\theta +1}{2\theta} C^\theta \right )^{\frac{1}{\theta}}\frac{\theta +1}{\tau - \theta q + \theta}
\left ( r^{\frac{\tau - \theta q +\theta}{\theta +1}} - R_0^{\frac{\tau - \theta q +\theta}{\theta +1}}  \right ).
$$
Since $r$ in \eqref{3.9} can be arbitrarily large and $\tau-\theta q + \theta >0$, this contradicts with \eqref{3.4}. Thus, the proof of necessity is complete.
\end{proof}

\vspace{3mm}



\section{Applications to $k$-Hessian type equations}\label{Section 4}

\sloppy{}

In this section, we first recall the $k$-Hessian operators and their radial symmetric forms. Then we apply Theorem \ref{Th 1.2} to the standard $k$-Hessian equations, degenerate or singular $k$-Hessian equations, and other extended forms of $k$-Hessian equations, respectively. We also apply Theorem \ref{Th 1.3} to these equations at the end.

The normalized $k$-Hessian operator is defined by
\begin{equation}\label{4.1}
\tilde S_k [u] = S_k^{\frac{1}{k}} [D^2u],
\end{equation}
for $k=1, \cdots, n$, and the function $S_k$ is the sum of $k$-products of eigenvalues, namely
	\begin{equation}\label{4.2}
	S_k[D^2u]:= S_k(\lambda (D^2u)) = \sum_{i_1<\cdots<i_k} \prod_{s=1}^k \lambda_{i_s}(D^2u), \quad k=1, \cdots, n.
	\end{equation}
In \eqref{4.2}, $i_1, \cdots, i_k \subset \{1, \cdots, n\}$, and $\lambda(D^2u) = (\lambda_1(D^2u), \cdots, \lambda_n(D^2u))$ denote the eigenvalues of the matrix $D^2u$. The operator $S_k$ in \eqref{4.2} is the $k$-th order elementary symmetric polynomial, which is extensively studied in fully nonlinear partial differential equations related to geometric problems; see \cite{T1995, W2009}. Note that the operator $S_k$ in \eqref{4.2} reduces to the Laplacian when $k=1$, and to the Monge-Amp\`ere operator when $k=n$. Therefore, $S_k$ ($k=1, \cdots, n$) in \eqref{4.2} are a group of operators interpolating between the Laplacian and the Monge-Amp\`ere operator.

To work in the realm of elliptic setting, we define the G\"arding's cone,
	\begin{equation}\label{4.3}
	\Gamma_k = \{ \lambda \in \mathbb{R}^n \ | \ S_i(\lambda)>0, \ i=1,\cdots, k\},
	\end{equation}
and  assume that $\lambda(D^2u)\in \Gamma_{k}$.
Here, a function $u\in C^2(\Omega)$ is called admissible in $\Omega$ if $\lambda(D^2u)\in \Gamma_{k}$ for all $x\in \Omega\subset \mathbb{R}^n$.

Assume $v\in C^2[0, \infty)$ is radially symmetric with $v'(0)=0$. Then for $r=|x|=(\sum\limits_{i=1}^n x_i^2)^{1/2}$ and  $u(x)=v(r)$, the eigenvalues of $D^2u$ are given by
	\begin{equation}\label{4.4}
	\lambda(D^2 u)(x) =
	\left\{
	\begin{aligned}
	& \left ( v''(r), \frac{1}{r}v'(r), \cdots, \frac{1}{r}v'(r) \right), & r\in (0, +\infty),\\
	& (v''(0), \cdots, v''(0)), & r=0,\ \ \ \ \  \ \ \  \
	\end{aligned}
	\right.
	\end{equation}
where $v''(0)=\lim\limits_{r\rightarrow 0} \frac{v'(r)-v'(0)}{r-0}=\lim\limits_{r\rightarrow 0} \frac{v'(r)}{r}$ is used. For convenience, we will interpret  $\frac{1}{r}v'(r)|_{r=0}=v''(0)$ in the context.
By direct calculation, we have
	\begin{equation}\label{4.5}
	S_k[D^2u]:= S_k(\lambda (D^2u)) =
	\left\{
	\begin{aligned}
	&C_{n-1}^{k-1}\left(\frac{v'(r)}{r}\right)^{k-1}\left(v''(r)+\frac{n-k}{k}\frac{v'(r)}{r}\right), & r\in (0, +\infty),\\
	& C_n^k (v''(0))^k, & r=0, \ \ \ \ \  \ \ \  \
	\end{aligned}
	\right.
	\end{equation}
and
	\begin{equation}\label{4.6}
	\tilde S_k[u] =
	\left\{
	\begin{aligned}
	&\left(C_{n-1}^{k-1}\right)^{\frac{1}{k}}\left(\frac{v'(r)}{r}\right)^{\frac{k-1}{k}}\left(v''(r)+\frac{n-k}{k}\frac{v'(r)}{r}\right)^\frac{1}{k}, & r\in (0, +\infty),\\
	& \left(C_n^k\right)^{\frac{1}{k}} v''(0), & r=0. \ \ \ \ \  \ \ \  \
	\end{aligned}
	\right.
	\end{equation}

Next, we shall consider several $k$-Hessian type equations in the following subsections. 

\subsection{Standard $k$-Hessian case}\label{Section 4.1}
We firist recall the standard $k$-Hessian equations in \cite{JB2010},
	\begin{equation}\label{4.7}
	\tilde S_k [u] = f(u), \quad {\rm in} \ \mathbb{R}^n.
	\end{equation}
In order to study the radial symmetric solutions $u(x)=v(r)$ of \eqref{4.7}, by using \eqref{4.6}, it is enough to consider
	\begin{equation}\label{4.8}
	\left(C_{n-1}^{k-1}\right)^{\frac{1}{k}}\left(\frac{v'(r)}{r}\right)^{\frac{k-1}{k}}\left(v''(r)+\frac{n-k}{k}\frac{v'(r)}{r}\right)^\frac{1}{k} = f(v(r)), \ r>0,
	\end{equation}
with $v'(0)=0$. By writing it in the divergence form and integrating both sides, it is equivalent to consider the Cauchy problem
	\begin{equation}\label{4.9}
	\left\{
	\begin{aligned}
	&v'(r)= r^{1-\frac{n}{k}}  \left(  \frac{k}{C_{n-1}^{k-1}} \int_{0}^{r} s^{n-1} f^{k} (v(s))ds \right )^{\frac{1}{k}}, \quad r\in (0, +\infty), \\
	&v(0)=a.
	\end{aligned}
	\right.
	\end{equation}
	This problem corresponds to problem \eqref{1.1} when $C=\left (k/C_{n-1}^{k-1} \right )^{\frac{1}{k}}$, $q=\frac{n}{k}-1$, $\tau =n$, $\theta =k$ and $g=f^k$.
	
	In fact, we shall also consider degenerate or singular $k$-Hessian equations with $|x|^\alpha$ and gradient terms $|Du|^\beta$ on the right hand side, namely
	\begin{equation}\label{4.10}
	\tilde S_k [u] = |x|^\alpha |Du|^\beta f(u), \quad {\rm in} \ \mathbb{R}^n,
	\end{equation}
for some constants $\alpha>-1$ and $\beta<1$. In this case, letting $u(x)=v(r)$, the corresponding ordinary differential equation is
	\begin{equation}\label{4.11}
	\left(C_{n-1}^{k-1}\right)^{\frac{1}{k}}\left(\frac{v'(r)}{r}\right)^{\frac{k-1}{k}}\left(v''(r)+\frac{n-k}{k}\frac{v'(r)}{r}\right)^\frac{1}{k} = r^\alpha |v'(r)|^\beta f(v(r)), \ r>0,
	\end{equation}
with $v'(0)=0$. Observing the divergence structure of \eqref{4.11}, we get
	\begin{equation}\label{4.12}
	v'(r)= r^{1-\frac{n}{k}}  \left( \frac{k}{C_{n-1}^{k-1}} \int_{0}^{r} s^{k\alpha +n-1} |v'(s)|^{k\beta} f^{k} (v(s))ds \right )^{\frac{1}{k}}>0, \ r>0.
	\end{equation}
Therefore, the symbol of absolute value on the right hand side of \eqref{4.11} can be removed. Then, we can derive the equivalent Cauchy problem
	\begin{equation}\label{4.13}
	\left\{
	\begin{aligned}
	&v'(r)= r^{1-\frac{n}{k}}  \left( \frac{k(1-\beta)}{C_{n-1}^{k-1}} \int_{0}^{r} s^{k(\alpha+\beta) -n\beta+n-1} f^{k} (v(s))ds \right )^{\frac{1}{k(1-\beta)}}, \quad r\in (0, +\infty), \\
	&v(0)=a.
	\end{aligned}
	\right.
	\end{equation}
This problem corresponds to problem \eqref{1.1} when $C=\left[ \frac{k(1-\beta)}{C_{n-1}^{k-1}} \right]^{\frac{1}{k(1-\beta)}} $, $q=\frac{n}{k}-1$, $\tau =k(\alpha+\beta)-n\beta+n$, $\theta = k(1-\beta)$ and $g=f^k$.

\subsection{Extensions to $p$-$k$ Hessian equations}\label{Section 4.2}
In this subsection, we consider the equations in Section \ref{Section 4.1} with the Hessian matrix $D^2u$ replaced by $D(|Du|^{p-2}Du)$, where $p> 1$. In this part, a function $u\in C^2(\Omega)$ is called admissible in $\Omega$ if $\lambda(D(|Du|^{p-2}Du))\in \Gamma_{k}$ for all $x\in \Omega\subset \mathbb{R}^n$.

Assume $v\in C^2[0, \infty)$ is radially symmetric with $v'(0)=0$. Then for $r=|x|=(\sum\limits_{i=1}^n x_i^2)^{1/2}$ and  $u(x)=v(r)$, the eigenvalues of $D(|Du|^{p-2}Du)$ are given by
	\begin{equation}\label{4.14}
	\lambda (D(|Du|^{p-2}Du)) = \left ( ((v'(r))^{p-1})', \frac{(v'(r))^{p-1}}{r}, \cdots, \frac{(v'(r))^{p-1}}{r} \right), \quad r>0.
	\end{equation}
Then by direct calculations, we have	
	\begin{equation}\label{4.15}
	S_k [D(|Du|^{p-2}Du)]=C_{n-1}^{k-1}\left(\frac{(v'(r))^{p-1}}{r}\right)^{k-1}\left(((v'(r))^{p-1})'+\frac{n-k}{k}\frac{(v'(r))^{p-1}}{r}\right), \quad r>0.
	\end{equation}
	
In place of \eqref{4.7}, we consider
	\begin{equation}\label{4.16}
	S_k^{\frac{1}{k}} [D(|Du|^{p-2}Du)] = f(u), \quad {\rm in} \ \mathbb{R}^n.
	\end{equation}
Then the corresponding ordinary differential equation is
	\begin{equation}\label{4.17}
	\left(C_{n-1}^{k-1}\right)^{\frac{1}{k}}\left(\frac{(v'(r))^{p-1}}{r}\right)^{\frac{k-1}{k}}\left(((v'(r))^{p-1})'+\frac{n-k}{k}\frac{(v'(r))^{p-1}}{r}\right)^\frac{1}{k} = f(v(r)), \ r>0,
	\end{equation}
with $v'(0)=0$. From \eqref{4.17}, we get
	\begin{equation}\label{4.18}
	\left\{
	\begin{aligned}
	&v'(r)= r^{\frac{k-n}{k(p-1)}}  \left( \frac{k}{C_{n-1}^{k-1}}  \int_{0}^{r} s^{n-1} f^{k} (v(s))ds \right )^{\frac{1}{k(p-1)}}, \quad r\in (0, +\infty), \\
	&v(0)=a.
	\end{aligned}
	\right.
	\end{equation}
	This problem corresponds to problem \eqref{1.1} when $C=\left (k/C_{n-1}^{k-1} \right )^{\frac{1}{k(p-1)}}$, $q=\frac{n-k}{k(p-1)}$, $\tau =n$, $\theta =k(p-1)$ and $g=f^k$.
	
	Moreover, we can also consider the degenerate or singular versions of equation \eqref{4.16}, namely the equation \eqref{1.5} for some constants $\alpha>-1$ and $\beta<p-1$.
By calculations on the radial symmetric solutions of \eqref{1.4}, we get the Cauchy problem
	\begin{equation}\label{4.20}
	\left\{
	\begin{aligned}
	&v'(r)= r^{\frac{k-n}{k(p-1)}}  \left( \frac{k(p-\beta-1)}{C_{n-1}^{k-1}(p-1)} \int_{0}^{r} s^{k\alpha+\frac{k-n}{p-1}\beta+n-1} f^{k} (v(s))ds \right )^{\frac{1}{k(p-\beta-1)}}, \quad r\in (0, +\infty), \\
	&v(0)=a.
	\end{aligned}
	\right.
	\end{equation}
	This problem corresponds to problem \eqref{1.1} when $C=\left[ \frac{k(p-\beta-1)}{C_{n-1}^{k-1}(p-1)} \right]^{\frac{1}{k(p-\beta-1)}}$, $q=\frac{n-k}{k(p-1)}$, $\tau =k\alpha+\frac{k-n}{p-1}\beta+n$, $\theta =k(p-\beta-1)$ and $g=f^k$.

\subsection{Keller-Osserman theorem}\label{Section 4.3}
We consider equation \eqref{1.5} and prove its Keller-Osserman theorem, which involve the equations \eqref{4.7}, \eqref{4.10} and \eqref{4.16} as special cases.

We begin with two lemmas. The first lemma is a comparison result.

\begin{Lemma}\label{Lemma 4.1}
Assume that $f: \mathbb{R} \to (0,+\infty)$ is a non-decreasing continuous function, $\alpha>-1$, $\beta<p-1$ and $p>1$.
Assume either of the following assumptions holds:
\begin{itemize}
\item [(i).]
$p\le \alpha+\beta+2$, $v(r)\in C^2[0, R)$ is a solution of \eqref{4.20} for $r\in (0, R)$ with $v'(0)=0$ and $v(r)\to +\infty$ as $r\to R$, and $u(x)\in C^2(\mathbb{R}^n)$ is an entire admissible solution of \eqref{1.5}. 
\item [(ii).]
$p> \alpha+\beta+2$, $(p-1)\alpha+\beta \ge 0$, $v(r)\in C^2(0, R)\cap W^{2,\delta}(0, R)$ for $\delta \in (1, \frac{p-\beta-1}{p-\alpha-\beta-2})$ is a solution of \eqref{4.20} for $r\in (0, R)$ with $v'(0)=0$ and $v(r)\to +\infty$ as $r\to R$, and $u(x)\in C^2(\mathbb{R}^n\backslash \{0\})\cap  W_{loc}^{2,\delta}(\mathbb{R}^n)$ for $\delta \in (1, \frac{p-\beta-1}{p-\alpha-\beta-2})$ is an entire admissible solution of \eqref{1.5}. 
\end{itemize}
Then we have
\begin{equation}\label{4.21}
u(x)\le v(|x|), \quad \forall x\in B_R.
\end{equation}
\end{Lemma}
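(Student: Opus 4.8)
\textbf{Proof proposal for Lemma \ref{Lemma 4.1}.}

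The plan is to prove the comparison inequality \eqref{4.21} by a maximum-principle argument on balls $B_\rho$ with $\rho < R$, comparing the entire admissible solution $u$ of \eqref{1.5} with the radial barrier $v(|x|)$. First I would fix $\rho \in (0, R)$ and consider the function $w := u - v(|x|)$ on $\overline{B_\rho}$. Since $v(r) \to +\infty$ as $r \to R$ and $v$ is increasing, by choosing $\rho$ close enough to $R$ we can make $v(\rho)$ as large as we like; but $u$ is a fixed continuous function on the compact set $\overline{B_\rho}$, hence bounded there. The idea is that it suffices to show $w \le 0$ on $B_\rho$ for such $\rho$, and then let $\rho \to R$. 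Actually the cleaner route is: suppose for contradiction that $\sup_{B_R}(u - v(|x|)) =: M > 0$. For $\rho$ sufficiently close to $R$ we have $u - v(|x|) < 0$ near $\partial B_\rho$ (since $v(\rho) \to +\infty$ while $u$ stays bounded on $\overline{B_R}$ — note $u \in C^2(\mathbb{R}^n)$ in case (i), and in case (ii) $u$ is still continuous on $\overline{B_R}$), so the positive supremum $M$ is attained at an interior point, or we can run the argument with a small shift: replace $v$ by $v_\epsilon(r) := v(r) + \epsilon$ and note $v_\epsilon$ still solves the differential inequality with the same or a smaller right-hand side after adjusting, then compare.

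The core of the argument is the differential inequality satisfied by the two sides. The function $v(|x|)$, by construction from \eqref{4.20}, satisfies $S_k^{1/k}[D(|Dv|^{p-2}Dv)] = |x|^\alpha |Dv|^\beta f(v)$ pointwise for $0 < |x| < R$ (this is exactly the content of \eqref{4.17}--\eqref{4.20} read backwards), while $u$ satisfies the same equation on $\mathbb{R}^n$ (resp. $\mathbb{R}^n \setminus \{0\}$ in case (ii)). At a point $x_0 \ne 0$ where $w = u - v(|x|)$ attains a positive interior maximum over $\overline{B_\rho}$, we have $Du(x_0) = Dv(|x_0|)\,\frac{x_0}{|x_0|}$ and $D^2 u(x_0) \le D^2(v(|x|))(x_0)$ as symmetric matrices. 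The key monotonicity facts are: (a) the map $A \mapsto S_k^{1/k}[D(|p|^{p-2}p) \text{ linearized}]$ is degenerate elliptic / monotone in the Hessian argument on the G\r{a}rding cone $\Gamma_k$, so that $D^2 u(x_0) \le D^2(v(|x|))(x_0)$ together with equal gradients forces $S_k^{1/k}[D(|Du|^{p-2}Du)](x_0) \le S_k^{1/k}[D(|Dv|^{p-2}Dv)](x_0)$; and (b) the right-hand side $|x|^\alpha |q|^\beta f(t)$ is non-decreasing in $t$ since $f$ is non-decreasing, and since $u(x_0) > v(|x_0|)$ and the gradients agree, we get $|x_0|^\alpha |Du(x_0)|^\beta f(u(x_0)) \ge |x_0|^\alpha |Dv|^\beta f(v(|x_0|))$. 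Chaining the equation for $u$, fact (a), the equation for $v$, and fact (b) produces a contradiction unless the gradient vanishes or $x_0 = 0$, i.e. the standard degenerate-elliptic comparison argument, but with the subtlety that $|Du(x_0)|$ could be zero (where the operator degenerates) and that $\beta$ may be negative (singularity in the gradient).

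The main obstacle, and where cases (i) and (ii) diverge, is handling the point $x_0 = 0$ and the degeneracy/singularity of the operator. In case (i), $p \le \alpha + \beta + 2$ ensures the local solution $v$ is $C^2$ up to $0$ with controlled behavior (via Theorem \ref{Th 1.1}(i)--(ii) and Lemma \ref{Lemma 2.2}), so one can include $0$ by a limiting argument or by noting $w$ cannot have a strict interior max at $0$ after comparing the values $v''(0)$ with the radial behavior of $u$ near $0$; the sign condition on $p, \alpha, \beta$ is exactly what makes the right-hand side term $|x|^\alpha|Du|^\beta$ harmless (non-singular in a suitable averaged sense) at the origin. In case (ii), $v$ is only $W^{2,\delta}$ near $0$ and $u$ only $W^{2,\delta}_{loc}$, so the pointwise maximum principle must be replaced by an integral/weak comparison: one integrates the divergence-form identity $\mathrm{div}\big((\cdots)\big)$ obtained from \eqref{4.15}--\eqref{4.20} over $B_\rho \setminus B_\eta$, uses the $W^{2,\delta}$ bound to show the boundary term on $\partial B_\eta$ vanishes as $\eta \to 0$ (this is where $\delta \in (1, \frac{p-\beta-1}{p-\alpha-\beta-2})$ and $(p-1)\alpha + \beta \ge 0$ enter, guaranteeing integrability of $r^{\mathrm{power}}$ near $0$), and then runs the comparison on the punctured ball where everything is classical. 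I expect the delicate bookkeeping of exponents to confirm the $W^{2,\delta}$ boundary term decay to be the most technical part; the elliptic comparison on $B_\rho \setminus \{0\}$ itself is routine given the monotonicity of $S_k^{1/k}$ on $\Gamma_k$.
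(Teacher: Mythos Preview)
Your approach for case (i) is essentially the paper's: a touching/maximum-principle argument on a ball $B_{R'}$ with $R'$ close to $R$, using that $v(|x|)$ blows up at $\partial B_R$ while $u$ stays bounded, together with the monotonicity of $f$ and the degenerate ellipticity of $S_k^{1/k}$ on $\Gamma_k$. The paper packages this slightly differently (it shifts $u$ down by a constant $a$ and uses $L[u-a]\ge L[u]$, rather than shifting $v$ up), but the content is the same.

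For case (ii), however, your plan diverges from the paper's and has a real gap. You propose an integral/weak comparison on annuli $B_\rho\setminus B_\eta$, integrating a ``divergence-form identity obtained from \eqref{4.15}--\eqref{4.20}'' and sending $\eta\to 0$. The trouble is that the divergence structure in \eqref{4.15}--\eqref{4.20} is a \emph{radial} identity for $v$; there is no analogous divergence form for the $k$-Hessian operator applied to a general (non-radial) $u$ when $k>1$, so there is nothing to integrate by parts against in the comparison of $u$ with $v(|\cdot|)$. The $W^{2,\delta}$ exponent range you plan to exploit does not obviously buy the vanishing of any boundary term in such a (nonexistent) identity.

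The paper avoids this entirely. Its point is that although $v$ is only $C^2(0,R)\cap W^{2,\delta}(0,R)$, the relevant object for the operator is not $D^2w$ but $D(|Dw|^{p-2}Dw)$, and one checks directly (equations \eqref{4.23}--\eqref{4.30}, using Lemma \ref{Lemma 2.2} for the borderline case) that $|Dw|^{p-2}Dw\in C^1(B_R)$ precisely when $(p-1)\alpha+\beta\ge 0$. Once this is established, $S_k^{1/k}[D(|Dw|^{p-2}Dw)]$ is classically defined at $0$ and the pointwise maximum-principle argument from case (i) applies verbatim. So the role of the hypothesis $(p-1)\alpha+\beta\ge 0$ is not to control an integral boundary term, but to force enough regularity of $(v')^{p-1}$ at the origin that no weak argument is needed at all.
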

\begin{proof} 
The assumptions $p\le \alpha+\beta+2$ and $p> \alpha+\beta+2$ and the regularity of $v$ in (i) and (ii) can be checked from $\tau \ge \theta (q+1)$ and $\theta q < \tau < \theta (q+1)$ in Theorem \ref{Th 1.1}, respectively. 
Let 
$$
L[h]:= S_k^{\frac{1}{k}} [D(|Dh|^{p-2}Dh)] = |x|^\alpha |Dh|^\beta f(h).
$$

(i).
Let $w(x)=v(r)$ for $r=|x|$, a direct calculation shows that $w(x)\in C^2(B_R)$ is an entire admissible solution of \eqref{1.5}. Namely, $L[w]=0$ in $B_R$.

If $u>w$ somewhere in $B_R$, then there is some constant $a>0$ such that $u-a$ touches $w$ from below at some interior point $x_0\in B_R$, which means $u-a-w\le 0$ in $B_R$. Then there exists $R'\in (0, R)$ such that $x_0\in B_{R'}$. Since $w(x)=v(|x|)\to +\infty$ as $x\to \partial B_R$ and $u$ is bounded in $B_R$, we can assume that $\sup\limits_{B_{R'}}(u-a-w)<0$.
By the non-decreasing property of $f$, we have
\begin{equation}\label{4.22}
L[u-a] \ge L[u] =0 = L[w], \quad {\rm in} \ B_{R'}.
\end{equation}
By the maximum principle, we have
$$
0= \sup_{B_{R'}}(u-a-v) = \sup_{\partial B_{R'}}(u-a-v) <0,
$$
which is a contradiction.

\vspace{2mm}

(ii). In this case, we only need to prove that $w(x)=v(r)$ satisfies $|Dw|^{p-2}Dw \in C^1(B_R)$.
By the definition of derivative for $|Dw|^{p-2}Dw$ at $0$, we have
\begin{equation}\label{4.23}
\begin{aligned}
   & D_i(|Dw|^{p-2}D_jw)(0) \\
= & \lim_{r\to 0} \frac{(v')^{p-1}}{r} \delta_{ij} \\
= & \lim_{r\to 0} \left[ \frac{k}{n C_{n-1}^{k-1}} f^k(a) r^{\frac{k}{p-1}[(p-1)\alpha +\beta]} \right]^{\frac{p-1}{k(p-\beta-1)}} \delta_{ij} \\
= & \left( \frac{k}{n C_{n-1}^{k-1}} f^k(a) \right)^{\frac{p-1}{k(p-\beta-1)}} \left(\lim_{r\to 0} r^{\frac{(p-1)\alpha+\beta}{p-\beta -1}} \right) \delta_{ij},
\end{aligned}
\end{equation}
for $i, j=1, 2, \cdots, n$, where $\delta_{ij}$ is the usual Kronecker delta. For $x\in B_R\backslash \{0\}$, we have
\begin{equation}\label{4.24}
D_i(|Dw|^{p-2}D_jw)(x) = \left [ (p-1)(v'(r))^{p-2}v''(r) - \frac{(v'(r))^{p-1}}{r} \right ]\frac{x_ix_j}{r^2} +  \frac{(v'(r))^{p-1}}{r}\delta_{ij}.
\end{equation}
By \eqref{lim v(r)} and \eqref{limv''}, we have
\begin{equation}\label{4.25}
v'(r) = O\left ( r^{\frac{\alpha+1}{p-\beta-1}} \right), \quad v''(r) = O \left( r^{\frac{\alpha +\beta +2 -p}{p-\beta-1}} \right), \ \ {\rm as} \ r\to 0. 
\end{equation}
Then, we have
\begin{equation}\label{4.26}
D_i(|Dw|^{p-2}D_jw)(x) = O \left( r^{\frac{(p-1)\alpha +\beta}{p-\beta-1}} \right) \left( \frac{x_ix_j}{r^2} +  \delta_{ij}\right),\ \ {\rm as} \ x\to 0. 
\end{equation}
From \eqref{4.23} and \eqref{4.26}, we have
\begin{equation}\label{4.27}
D_i(|Dw|^{p-2}D_jw)(0) = \lim_{x\to 0}D_i(|Dw|^{p-2}D_jw)(x) =0,
\end{equation}
provided $(p-1)\alpha +\beta>0$. Hence, we get $|Dw|^{p-2}Dw \in C^1(B_R)$ when $(p-1)\alpha +\beta>0$.

Next, we consider the case when $(p-1)\alpha +\beta=0$.
Note that the assumption $\tau = \theta (q+\frac{1}{p-1})$ in Lemma \ref{Lemma 2.2} with $q=\frac{n-k}{k(p-1)}$, $\tau =k\alpha+\frac{k-n}{p-1}\beta+n$ and $\theta =k(p-\beta-1)$ is just $(p-1)\alpha +\beta=0$. By applying $g=f^k$ and Lemma \ref{Lemma 2.2} in \eqref{4.24}, we then get
\begin{equation}\label{4.28}
\lim_{x\to 0}D_i(|Dw|^{p-2}D_jw)(x) = \left( \frac{k}{n C_{n-1}^{k-1}} f^k(a) \right)^{\frac{p-1}{k(p-\beta-1)}} \delta_{ij}.
\end{equation}
In the case when $(p-1)\alpha +\beta=0$, \eqref{4.23} becomes
\begin{equation}\label{4.29}
D_i(|Dw|^{p-2}D_jw)(0)=\left( \frac{k}{n C_{n-1}^{k-1}} f^k(a) \right)^{\frac{p-1}{k(p-\beta-1)}} \delta_{ij}.
\end{equation}
From \eqref{4.28} and \eqref{4.29}, we have
\begin{equation}\label{4.30}
D_i(|Dw|^{p-2}D_jw)(0) = \lim_{x\to 0}D_i(|Dw|^{p-2}D_jw)(x) =\left( \frac{k}{n C_{n-1}^{k-1}} f^k(a) \right)^{\frac{p-1}{k(p-\beta-1)}} \delta_{ij},
\end{equation}
provided $(p-1)\alpha +\beta=0$. Hence, we get $|Dw|^{p-2}Dw \in C^1(B_R)$ when $(p-1)\alpha +\beta=0$.

The remaining proof of $u(x)\le v(|x|)$ for $x\in B_R$ in (ii) is the same of that of (i). We omit its details.
\end{proof}

The next lemma shows the relationship between the entire subsolution of \eqref{1.5} and the entire solution of the Cauchy problem \eqref{4.20}.
\begin{Lemma}\label{Lemma 4.2}
Assume that $f: \mathbb{R} \to (0,+\infty)$ is a non-decreasing continuous function, $\alpha>-1$, $\beta<p-1$ and $p>1$. 
\begin{itemize}
\item[(i).] If $p\le \alpha + \beta + 2$, then \eqref{1.5} admits an entire admissible subsolution $u(x)\in C^2(\mathbb{R}^n)$ if and only if the Cauchy problem \eqref{4.20} admits a solution $v(r)\in C^2[0,\infty)$ for some initial value $a$.

\item[(ii).] If $p> \alpha + \beta + 2$ and $(p-1)\alpha+\beta \ge 0$, then \eqref{1.5} admits an entire admissible subsolution $u(x)\in C^2(\mathbb{R}^n\backslash \{0\})\cap  W_{loc}^{2,\delta}(\mathbb{R}^n)$ if and only if the Cauchy problem \eqref{4.20} admits a solution $v(r)\in C^2(0, +\infty)\cap W^{2,\delta}_{loc}(0, +\infty)$ for some initial value $a$, where $\delta \in (1, \frac{p-\beta-1}{p-\alpha-\beta-2})$.
\end{itemize}
\end{Lemma}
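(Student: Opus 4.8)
The plan is to prove both directions by relating an entire admissible radial function on $\mathbb{R}^n$ with a solution of the Cauchy problem \eqref{4.20} via the identities \eqref{4.14}--\eqref{4.15}, exactly as in the derivation leading from \eqref{4.16} to \eqref{4.18} and from \eqref{1.5} to \eqref{4.20}. For the ``if'' direction, suppose \eqref{4.20} has an entire solution $v\in C^2[0,\infty)$ (case (i)) or $v\in C^2(0,+\infty)\cap W^{2,\delta}_{loc}(0,+\infty)$ (case (ii)). Set $u(x)=v(|x|)$. By Theorem \ref{Th 1.1} (applied with the parameters $C,q,\tau,\theta$ recorded after \eqref{4.20}) together with the regularity assertions (i)--(iii) there, one checks that $u$ has the claimed regularity: in case (i) the condition $p\le\alpha+\beta+2$ translates to $\tau\ge\theta(q+1)$, giving $v''(0)$ finite and hence $u\in C^2(\mathbb{R}^n)$; in case (ii) the condition $p>\alpha+\beta+2$ gives $\theta q<\tau<\theta(q+1)$, so $v\in W^{2,\delta}$ locally with $\delta\in(1,\tfrac{\theta}{\theta(q+1)-\tau})=(1,\tfrac{p-\beta-1}{p-\alpha-\beta-2})$, and by the computation in Lemma \ref{Lemma 4.1}(ii) (equations \eqref{4.23}--\eqref{4.30}, using $(p-1)\alpha+\beta\ge 0$) one gets $|Du|^{p-2}Du\in C^1$, so that $u\in C^2(\mathbb{R}^n\setminus\{0\})\cap W^{2,\delta}_{loc}(\mathbb{R}^n)$. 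Then \eqref{4.17} (equivalently \eqref{4.20}) shows that $u$ solves \eqref{1.5} with equality; in particular it is a subsolution.

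For the ``only if'' direction, suppose \eqref{1.5} has an entire admissible subsolution $u$ of the stated regularity class. The standard device is to pass to the spherical average. Writing $\bar u(r)=\fint_{\partial B_r}u\,dS$, one uses the concavity of $S_k^{1/k}$ on the G\r{a}rding cone $\Gamma_k$ together with Jensen's inequality (applied to the normalized surface measure, and also using that $|x|^\alpha$ is constant on $\partial B_r$ and that $|Du|^\beta f(u)$ can be controlled from below after averaging using monotonicity of $f$ and $\beta<p-1$, possibly via a Newton-type inequality to handle the $|Du|^\beta$ factor) to show that $\bar u$ is an admissible radial subsolution of the same equation, i.e.
\begin{equation*}
\left(C_{n-1}^{k-1}\right)^{\frac1k}\left(\frac{(\bar u'(r))^{p-1}}{r}\right)^{\frac{k-1}{k}}\left(((\bar u'(r))^{p-1})'+\frac{n-k}{k}\frac{(\bar u'(r))^{p-1}}{r}\right)^{\frac1k}\ \ge\ r^\alpha(\bar u'(r))^\beta f(\bar u(r)),\quad r>0.
\end{equation*}
Integrating the divergence form of this inequality from $0$ to $r$ (the boundary term at $0$ vanishes because admissibility forces $\bar u'(0)=0$) yields
\begin{equation*}
\bar u'(r)\ \ge\ r^{\frac{k-n}{k(p-1)}}\left(\frac{k(p-\beta-1)}{C_{n-1}^{k-1}(p-1)}\int_0^r s^{k\alpha+\frac{k-n}{p-1}\beta+n-1}f^k(\bar u(s))\,ds\right)^{\frac1{k(p-\beta-1)}},
\end{equation*}
so $\bar u$ is a (global) supersolution of the Cauchy problem \eqref{4.20} with initial value $a=\bar u(0)$. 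Since $\bar u$ is finite for all $r$, a comparison/continuation argument (the monotone-iteration or ODE-continuation scheme in the spirit of Theorem \ref{Th 1.2}) then produces an entire solution $v$ of \eqref{4.20} with $v\le\bar u$; alternatively, one invokes Lemma \ref{Lemma 4.1} in the contrapositive, noting that if \eqref{4.20} had no entire solution it would blow up at finite $R$, and then $v(|x|)$ would bound $u$ from above on $B_R$ by \eqref{4.21}, forcing $u$ to blow up as well and contradicting $u\in C^2(\mathbb{R}^n)$ (resp.\ $W^{2,\delta}_{loc}$). The regularity of this $v$ in cases (i)/(ii) is again read off from Theorem \ref{Th 1.1} as above.

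The main obstacle is the ``only if'' direction, specifically justifying that the spherical average $\bar u$ remains an admissible subsolution of \eqref{1.5}: one must show $\lambda(D(|D\bar u|^{p-2}D\bar u))\in\Gamma_k$ and handle the degenerate/singular factor $|x|^\alpha|Du|^\beta$ under averaging. The concavity of $S_k^{1/k}$ on $\Gamma_k$ is the key structural input, but the presence of the $|Du|^\beta$ term (with $\beta$ possibly negative, which is where $\alpha>-1$, $\beta<p-1$ and in case (ii) $(p-1)\alpha+\beta\ge 0$ enter) requires care; I expect one uses $\beta<p-1$ to get $|Du|^\beta\le|\bar u'|^\beta\cdot(\text{harmless factor})$ after averaging, or simply absorbs it by working with the ODE inequality directly on $\bar u$ rather than termwise. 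A secondary technical point in case (ii) is the behaviour at the origin: one must ensure $|D\bar u|^{p-2}D\bar u$ is $C^1$ there (or that the relevant integrals converge), which is precisely the content already established in \eqref{4.23}--\eqref{4.30} of Lemma \ref{Lemma 4.1}(ii) and can be quoted.
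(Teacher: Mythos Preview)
Your ``if'' direction is correct and matches the paper: set $u(x)=v(|x|)$ and read off the regularity from Theorem~\ref{Th 1.1}.

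For the ``only if'' direction, however, your primary strategy (spherical averaging) is both unnecessary and, as you yourself note, seriously obstructed. The operator $S_k^{1/k}[D(|Du|^{p-2}Du)]$ is not concave in $u$ or in $D^2u$ in any way that survives averaging: the matrix argument $D(|Du|^{p-2}Du)$ depends nonlinearly on $Du$ and $D^2u$, so even if $S_k^{1/k}$ is concave on $\Gamma_k$ as a function of its matrix argument, you cannot pass the spherical mean inside. There is no Jensen-type inequality available here, and your hedges about ``Newton-type inequalities'' and ``harmless factors'' do not close the gap.

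Your alternative via Lemma~\ref{Lemma 4.1} is the correct route and is exactly what the paper does, but you misidentify the contradiction. The inequality $u(x)\le v(|x|)$ on $B_R$ with $v\to+\infty$ as $r\to R$ does \emph{not} force $u$ to blow up; an upper bound blowing up says nothing about the lower function. The actual contradiction is at the origin, not at $\partial B_R$: from \eqref{4.21} you get $u(0)\le v(0)=a$. But the initial value $a$ in \eqref{4.20} is a free parameter, and Theorem~\ref{Th 1.1} guarantees a local solution for \emph{every} $a$. So if no choice of $a$ yields an entire solution, then for every $a$ the solution blows up at some finite $R_a$, Lemma~\ref{Lemma 4.1} applies, and hence $u(0)\le a$ for every $a\in\mathbb{R}$. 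Now simply pick $a<u(0)$ (e.g.\ $a=u(0)-1$) to obtain the contradiction. This is the entire content of the paper's necessity argument; no averaging, no ODE comparison/continuation scheme, no supersolution construction is needed.
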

\begin{proof}
We only prove (i), since the proof of (ii) is the same.

(i). The sufficiency is obvious. If there exists a solution $v(r)\in C^2(0,+\infty)$ satisfying \eqref{4.20}.
Letting $u(x)=v(r)$, direct calculations show that $u(x)$ is an entire solution of \eqref{1.5}. This implies that $u(x)\in C^2(\mathbb{R}^n)$ is a required subsolution of \eqref{1.1}.

Then we prove the necessity.
Suppose there is no entire solution to the Cauchy problem \eqref{4.20}. By Theorem \ref{Th 1.1}, problem \eqref{4.20} admits a solution $v(r)$ on the maximum existence interval $[0,R)$ with $v(R)= +\infty$  and $v(0)=a$.
By Lemma \ref{Lemma 4.1}, any subsolution $u(x)$ of  \eqref{1.5} satisfies $u(x)\leq v(r)$ for $|x|\leq R$. 	
In particular, $u(0)\leq v(0)=a$.
However, since $a$ is arbitrary, we get a contradictions by choosing
$a = \frac{u(0)}{2}$ for $u(0)>0$;
$a = 2u(0)$ for $u(0)<0$;
and $a =-1$ for $u(0)=0$.
\end{proof}

With Lemma \ref{Lemma 4.2} in hand, we apply Theorem \ref{Th 1.2} to prove the Keller-Osserman theorem for the $k$-Hessian equations \eqref{1.5}.
\begin{Theorem}[Keller-Osserman theorem for $k$-Hessian type equations]\label{Th 4.1}
Assume that $f: \mathbb{R} \to (0,+\infty)$ is a non-decreasing continuous function, $\alpha\ge \frac{1}{k}-1$, $\beta<p-1$ and $p>1$. 
\begin{itemize}
\item[(i).] If $p\le \alpha+\beta +2$, then there exists an entire nontrivial admissible subsolution $u\in C^2(\mathbb{R}^n)$ of \eqref{1.5} if and only if 
\begin{equation}\label{4.31}
\int^{+\infty} \left (\int_0^t f^k(s) ds \right )^{-\frac{1}{k(p-\beta -1)+1}} dt= + \infty
\end{equation}
holds;
\item[(ii).] If $p>\alpha+\beta +2$ and $(p-1)\alpha +\beta \ge 0$, then there exists an entire nontrivial admissible subsolution $u(x)\in C^2(\mathbb{R}^n\backslash \{0\})\cap  W_{loc}^{2,\delta}(\mathbb{R}^n)$ of \eqref{1.5} for $\delta \in (1, \frac{p-\beta-1}{p-\alpha-\beta-2})$ if and only if \eqref{4.31} holds.
\end{itemize}
\end{Theorem}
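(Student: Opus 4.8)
The plan is to deduce Theorem \ref{Th 4.1} from Theorem \ref{Th 1.2} through the translation recorded just after \eqref{4.20}: the Cauchy problem \eqref{4.20} is precisely \eqref{1.1} with $C=\bigl[\tfrac{k(p-\beta-1)}{C_{n-1}^{k-1}(p-1)}\bigr]^{1/(k(p-\beta-1))}$, $q=\tfrac{n-k}{k(p-1)}$, $\tau=k\alpha+\tfrac{k-n}{p-1}\beta+n$, $\theta=k(p-\beta-1)$ and $g=f^k$. Under this identification $\theta+1=k(p-\beta-1)+1$, so condition \eqref{KO} for \eqref{4.20} is literally \eqref{4.31}. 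It therefore suffices to (a) verify that the hypothesis $\alpha\ge\tfrac1k-1$ is equivalent to the hypothesis $\tau\ge\theta q+1$ under which Theorem \ref{Th 1.2} applies, (b) match the regularity trichotomy of Theorem \ref{Th 1.2} with the two cases of Theorem \ref{Th 4.1}, and (c) pass between the ``for any $a$'' assertion of Theorem \ref{Th 1.2} and the ``for some $a$'' assertion of Lemma \ref{Lemma 4.2}.

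For (a) and (b) I would simply compute, with these values of $q,\tau,\theta$, that
\[
\tau-\theta q-1=k\alpha+k-1=k\Bigl(\alpha-\tfrac1k+1\Bigr),\qquad \tau-\theta(q+1)=k(\alpha+\beta+2-p),
\]
so that $\tau\ge\theta q+1\iff\alpha\ge\tfrac1k-1$, and $\tau>\theta(q+1)$, $\tau=\theta(q+1)$, $\tau<\theta(q+1)$ correspond respectively to $p<\alpha+\beta+2$, $p=\alpha+\beta+2$, $p>\alpha+\beta+2$; moreover in the last case $\tfrac{\theta}{\theta(q+1)-\tau}=\tfrac{p-\beta-1}{p-\alpha-\beta-2}$, matching the range of $\delta$ in Theorem \ref{Th 4.1}(ii). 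Hence cases (i) and (ii) of Theorem \ref{Th 1.2} (where $v\in C^2[0,\infty)$) exactly cover the range $p\le\alpha+\beta+2$ of Theorem \ref{Th 4.1}(i), while case (iii) (where $v\in C^2(0,\infty)\cap W^{2,\delta}_{loc}(0,\infty)$) covers $p>\alpha+\beta+2$ of Theorem \ref{Th 4.1}(ii). The extra assumption $(p-1)\alpha+\beta\ge0$ in part (ii) plays no role at this stage; it is needed only inside Lemmas \ref{Lemma 4.1} and \ref{Lemma 4.2} to guarantee $|Dw|^{p-2}Dw\in C^1$.

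With (a) and (b) settled the two implications are immediate. If \eqref{4.31} holds then $\tau\ge\theta q+1$, and Theorem \ref{Th 1.2} yields an entire solution $v$ of \eqref{4.20} with $v'(0)=0$, $v'(r)>0$, $v''(r)>0$ and the regularity of case (i)/(ii) or (iii) according as $p\le\alpha+\beta+2$ or $p>\alpha+\beta+2$; Lemma \ref{Lemma 4.2} then converts $v$ into an entire admissible subsolution $u(x)=v(|x|)$ of \eqref{1.5}, which is nontrivial since $v'>0$ makes $u$ non-constant. Conversely, given an entire nontrivial admissible subsolution of \eqref{1.5} with the stated regularity, Lemma \ref{Lemma 4.2} produces an entire solution of \eqref{4.20} for some initial value $a$; the necessity direction of Theorem \ref{Th 1.2} — whose proof, via Lemma \ref{Lem 3.1}, uses only a solution for a single $a$ — then forces \eqref{KO}, i.e. \eqref{4.31}, which resolves (c). The only slightly delicate step is the exponent bookkeeping displayed above; once it is in hand there is no analytic obstacle, the substance having already been carried out in Theorem \ref{Th 1.2} and Lemmas \ref{Lemma 4.1}–\ref{Lemma 4.2}.
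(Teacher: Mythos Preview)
Your proposal is correct and follows essentially the same route as the paper: both identify \eqref{4.20} with \eqref{1.1} via the indicated parameters, verify that $\alpha\ge\tfrac1k-1$, $\beta<p-1$, $p\lessgtr\alpha+\beta+2$ translate to $\tau\ge\theta q+1$, $\theta>0$, $\tau\gtrless\theta(q+1)$, and then invoke Theorem~\ref{Th 1.2} together with Lemma~\ref{Lemma 4.2}. Your explicit bookkeeping of the exponents and your remark resolving the ``for any $a$'' versus ``for some $a$'' mismatch are welcome elaborations that the paper leaves implicit.
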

\begin{proof}
We only prove (i), since the proof of (ii) is the same.

(i). We see that the corresponding Cauchy problem for \eqref{1.5} is \eqref{4.20}, which is just problem \eqref{1.1} when $C=\left[ \frac{k(p-\beta-1)}{C_{n-1}^{k-1}(p-1)} \right]^{\frac{1}{k(p-\beta-1)}}$, $q=\frac{n-k}{k(p-1)}$, $\tau =k\alpha+\frac{k-n}{p-1}\beta+n$, $\theta =k(p-\beta-1)$ and $g=f^k$.
When $p>1$, it is readily checked that $\alpha\ge \frac{1}{k}-1$ implies $\tau \ge \theta q +1$; $\beta < p-1$ implies $\theta >0$; and $p\le \alpha + \beta +2$ implies $\tau \ge \theta(q+1)$. Moreover, $C>0$ and $q\ge 0$ are automatically satisfied. Then by Theorem \ref{Th 1.2} and Lemma \ref{Lemma 4.2}, there exists an entire nontrivial subsolution $u(x)\in C^2 (\mathbb{R}^n)$ if and only if \eqref{4.31} holds.
\end{proof}

Using Lemma \ref{Lemma 4.2}, we apply Theorem \ref{Th 1.3} to prove the following theorem for the $k$-Hessian equations \eqref{1.5}.
\begin{Theorem}\label{Th 4.2}
Assume that $f: \mathbb{R} \to (0,+\infty)$ is a non-decreasing continuous function, $-1< \alpha < \frac{1}{k}-1$, $\beta<p-1$ and $p>1$. 
\begin{itemize}
\item[(a).] If $f$ satisfies 
	\begin{equation}\label{4.32}
	\int^{+\infty} \left ( \int_0^t f^{\kappa k}(s)ds \right )^{-\frac{1}{\kappa k(p-\beta -1)+1}} dt = + \infty,
	\end{equation}
where $\kappa =\frac{1-\epsilon}{k(\alpha+1) - \epsilon}$ for some constant $\epsilon\in (0, k(\alpha+1))$, then there exists an entire nontrivial admissible subsolution $u(x)\in C^1(\mathbb{R}^n)\cap C^2(\mathbb{R}^n\backslash \{0\})$ of \eqref{1.5}. Moreover, the following properties hold:
\begin{itemize}
\item[(i).] If $p\le \alpha +\beta +2$, then $u(x) \in C^2 (\mathbb{R}^n)$; 

\item[(ii).] If $p> \alpha + \beta +2$, $(p-1)\alpha + \beta \ge 0$, then $u(x)\in C^2(\mathbb{R}^n\backslash \{0\})\cap  W_{loc}^{2,\delta}(\mathbb{R}^n)$ for $\delta \in (1, \frac{p-\beta-1}{p-\alpha-\beta-2})$.
\end{itemize}

\item[(b).] If the equation \eqref{1.5} admits an entire nontrivial admissible subsolution $u$ satisfying (i) or (ii) in (a), then the condition \eqref{KO} holds.
\end{itemize}
\end{Theorem}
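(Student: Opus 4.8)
The plan is to run the proof of Theorem \ref{Th 4.1} with Theorem \ref{Th 1.3} in place of Theorem \ref{Th 1.2}. By the reduction recorded at the end of Section \ref{Section 4.2}, the radial version of \eqref{1.5} is the Cauchy problem \eqref{4.20}, which is \eqref{1.1} with $\theta=k(p-\beta-1)$, $q=\frac{n-k}{k(p-1)}$, $\tau=k\alpha+\frac{k-n}{p-1}\beta+n$, $g=f^k$, and $C=\left[\frac{k(p-\beta-1)}{C_{n-1}^{k-1}(p-1)}\right]^{\frac{1}{k(p-\beta-1)}}$. First I would set up the parameter dictionary: a short computation gives $\tau-\theta q=k(\alpha+1)$ and $\tau-\theta(q+1)=k(\alpha+\beta+2-p)$, so that $\beta<p-1$ is exactly $\theta>0$, the hypothesis $-1<\alpha<\frac{1}{k}-1$ is exactly $\theta q<\tau<\theta q+1$, and the trichotomy $p\lessgtr\alpha+\beta+2$ is $\tau\gtrless\theta(q+1)$; moreover $C>0$ and $q\ge0$ hold automatically. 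Since $\tau-\theta q=k(\alpha+1)$ and $\kappa\theta+1=\kappa k(p-\beta-1)+1$, condition \eqref{1.3} for \eqref{4.20} (with $g=f^k$) is literally condition \eqref{4.32}, with the same $\kappa$ and $\epsilon$; likewise \eqref{KO} becomes \eqref{4.31}.

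For part (a), Theorem \ref{Th 1.3}(a) then provides, for some initial value $a$, an entire solution $v\in C^1[0,+\infty)\cap C^2(0,+\infty)$ of \eqref{4.20} with $v'(0)=0$ and $v'(r),v''(r)>0$ for $r>0$, hence $v$ is nonconstant. Setting $u(x)=v(|x|)$, continuity of $v'$ together with $v'(0)=0$ forces $u\in C^1(\mathbb{R}^n)\cap C^2(\mathbb{R}^n\backslash\{0\})$, and the computations of Section \ref{Section 4.2} show that $u$ is an admissible subsolution of \eqref{1.5}, nontrivial because it is nonconstant, with equality for $|x|>0$. The sharper regularity is then read off Lemma \ref{Lemma 4.2}: when $p\le\alpha+\beta+2$ we are in case (i) or (ii) of Theorem \ref{Th 1.3}, so $v\in C^2[0,+\infty)$ and Lemma \ref{Lemma 4.2}(i) gives $u\in C^2(\mathbb{R}^n)$; when $p>\alpha+\beta+2$ we are in case (iii), so $v\in C^2(0,+\infty)\cap W_{loc}^{2,\delta}(0,+\infty)$ with $\delta\in\left(1,\frac{\theta}{\theta(q+1)-\tau}\right)=\left(1,\frac{p-\beta-1}{p-\alpha-\beta-2}\right)$, and the hypothesis $(p-1)\alpha+\beta\ge0$ lets Lemma \ref{Lemma 4.2}(ii) upgrade this to $u\in C^2(\mathbb{R}^n\backslash\{0\})\cap W_{loc}^{2,\delta}(\mathbb{R}^n)$.

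For part (b), suppose \eqref{1.5} admits an entire nontrivial admissible subsolution $u$ of the regularity in case (i) or (ii). By the necessity direction of Lemma \ref{Lemma 4.2}, which rests on the comparison Lemma \ref{Lemma 4.1} and on the freedom to vary the initial value $a$, the Cauchy problem \eqref{4.20} has an entire solution $v$; by Theorem \ref{Th 1.1} this $v$ automatically satisfies $v'(0)=0$, $v'(r),v''(r)>0$ for $r>0$ and the regularity of case (i), (ii) or (iii) of Theorem \ref{Th 1.3}(a), according to the sign of $p-(\alpha+\beta+2)$. Theorem \ref{Th 1.3}(b) then yields condition \eqref{KO}, that is \eqref{4.31}, which finishes the proof.

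I expect essentially all of this to be routine bookkeeping once the correspondence between \eqref{1.5} and \eqref{1.1} is in place. The one place a genuine difficulty could hide, already isolated inside Lemma \ref{Lemma 4.1} and therefore only cited here, is the behaviour of $D(|Dw|^{p-2}Dw)$ at the origin in the range $p>\alpha+\beta+2$ with $(p-1)\alpha+\beta=0$: the crude estimate near $0$ only yields $W_{loc}^{2,\delta}$, and one must invoke the precise limit of Lemma \ref{Lemma 2.2} to conclude that $|Dw|^{p-2}Dw$ is in fact $C^1$ up to the origin, with the expected value $\left(\frac{k}{nC_{n-1}^{k-1}}f^k(a)\right)^{\frac{p-1}{k(p-\beta-1)}}\delta_{ij}$. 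That is where I would concentrate the verification.
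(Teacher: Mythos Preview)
Your proposal is correct and follows precisely the paper's approach: the paper's proof is a one-line ``combine Theorem \ref{Th 1.3} and Lemma \ref{Lemma 4.2},'' and you have simply written out the parameter dictionary and the invocation of those results in detail. Your identification of the borderline case $(p-1)\alpha+\beta=0$ as the delicate point (handled via Lemma \ref{Lemma 2.2} inside Lemma \ref{Lemma 4.1}) matches exactly what the paper does.
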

\begin{proof}
The proof can be done directly by combining Theorem \ref{Th 1.3} and Lemma \ref{Lemma 4.2}.
\end{proof}

\vspace{3mm}

\section{Applications to $\Pi_k$-Hessian type equations}\label{Section 5}

\sloppy{}

In this section, we introduce the $\Pi_k$-Hessian operators, which can be regarded as the counterparts of the $k$-Hessian operators in Section \ref{Section 4}. Various radial symmetric forms of  the $\Pi_k$-Hessian type equations, including the standard $k$-Hessian equations, degenerate or singular $k$-Hessian equations, and other extended forms of $k$-Hessian equations, will be studied. We show that the series of $\Pi_k$-Hessian type equations are also covered in the framework of Theorems \ref{Th 1.2} and Theorem \ref{Th 1.3}.

The normalized $\Pi_k$-Hessian operator is defined by
	\begin{equation}\label{5.1}
	\tilde \Pi_k[u] = (\Pi_k[D^2u])^{\frac{1}{C_n^k}},
	\end{equation}
where $C_n^k = \frac{n!}{k!(n-k)!}$ for $k=1, \cdots, n$, and the function $\Pi_k$ is the product of $k$-sums of eigenvalues, namely
	\begin{equation}\label{5.2}
	\Pi_k[D^2u]:= \Pi_k(\lambda (D^2u)) = \prod_{i_1<\cdots<i_k} \left ( \sum_{s=1}^k \lambda_{i_s}(D^2u) \right ), \quad k=1, \cdots, n.
	\end{equation}
In \eqref{5.2}, $i_1, \cdots, i_k \subset \{1, \cdots, n\}$, and $\lambda(D^2u) = (\lambda_1(D^2u), \cdots, \lambda_n(D^2u))$ denote the eigenvalues of the matrix $D^2u$.

In order to work in the realm of elliptic setting, we define the cone,
	\begin{equation}\label{5.3}
	\mathcal{P}_k = \{ \lambda \in \mathbb{R}^n \ | \ \sum_{s=1}^k \lambda_{i_s}>0\}
	\end{equation}
and  assume throughout this section that $\lambda(D^2u)\in \mathcal{P}_{k}$.
Here, a function $u\in C^2(\Omega)$ is called admissible in $\Omega$ if $\lambda(D^2u)\in \mathcal{P}_{k}$ for all $x\in \Omega\subset \mathbb{R}^n$.


The $\Pi_k$-Hessian operators are closely related to $k$-convex hypersurfaces in differential geometry \cite{S1986, S1987}, and are also used as main examples of the operators in the investigations of oblique boundary value problems for augmented Hessian equations in \cite{JT2017, JT2018, JT2019, T2020}.
It is clear that the operator $\Pi_k$ in \eqref{5.3} reduces to the Laplacian when $k=n$, and to the Monge-Amp\`ere operator when $k=1$. The $\Pi_k$-Hessian operators can thus be regarded as counterparts of the $k$-Hessian operators in the previous section,  which are another group of operators interpolating between the Laplacian and the Monge-Amp\`ere operator.

Assume $v\in C^2[0, \infty)$ is radially symmetric with $v'(0)=0$. Then for $r=|x|=(\sum\limits_{i=1}^n x_i^2)^{1/2}$ and  $u(x)=v(r)$, we have $u\in C^2(\mathbb{R}^n)$ and the eigenvalues of $D^2u$ are given by \eqref{4.4}.
By direct calculations, we have
	\begin{equation}\label{5.4}
	\Pi_k[D^2u]: = \Pi_k(\lambda (D^2u)) =
	\left\{
	\begin{aligned}
	&\left(k\frac{v'(r)}{r}\right)^{C_{n-1}^k}\left(v''(r)+(k-1)\frac{v'(r)}{r}\right)^{C_{n-1}^{k-1}}, & r\in (0, +\infty),\\
	& (kv''(0))^{C_n^k}, & r=0, \ \ \ \ \  \ \ \  \
	\end{aligned}
	\right.
	\end{equation}
for $k=1, \cdots, n$, where we adopt the convention that $C_{n-1}^k=0$ if $k=n$. Therefore, the normalized form
	\begin{equation}\label{5.5}
	\tilde \Pi_k[D^2u](x)=
	\left\{
	\begin{aligned}
	& \left(k\frac{v'(r)}{r}\right)^{\frac{n-k}{n}}\left(v''(r)+(k-1)\frac{v'(r)}{r}\right)^{\frac{k}{n}}, & \quad r\in (0, +\infty),\\
	& kv''(0), & \quad r=0, \ \ \ \ \  \ \ \  \
	\end{aligned}
	\right.
	\end{equation}
for $k=1, \cdots, n$.

Next, we shall consider several $\Pi_k$-Hessian type equations in the following subsections. 

\subsection{Standard $\Pi_k$-Hessian case}\label{Section 5.1}
We first consider the normalized $\Pi_k$-Hessian equations with the inhomogeneous term $f(u)$, namely
	\begin{equation}\label{5.6}
	\tilde \Pi_k [u] = f(u), \quad {\rm in} \ \mathbb{R}^n.
	\end{equation}
Using \eqref{5.5}, we can write the radial symmetric form of \eqref{5.6} as
	\begin{equation}\label{5.7}
	\left(k\frac{v'(r)}{r}\right)^{\frac{n-k}{n}}\left(v''(r)+(k-1)\frac{v'(r)}{r}\right)^{\frac{k}{n}} = f(v(r)), \quad r>0,
	\end{equation}
with $v'(0)=0$.
By observing the divergence structure in \eqref{5.7}, we get the corresponding Cauchy problem
\begin{equation}\label{5.8}
	\left\{
	\begin{aligned}
	&v'(r)=\frac{r^{1-k}}{k} \left ( \int_{0}^{r} n s^{n-1} f^{\frac{n}{k}} (v(s))ds\right)^{\frac{k}{n}}, \quad r\in (0, +\infty), \\
	&v(0)=a.
	\end{aligned}
	\right.
	\end{equation}
	This problem corresponds to problem \eqref{1.1} when $C=n^{\frac{k}{n}}/ k$, $q=k-1$, $\tau =n$, $\theta=\frac{n}{k}$ and $g=f^{\frac{n}{k}}$.

We shall also further consider the degenerate or singular $\Pi_k$-Hessian equations
	\begin{equation}\label{5.9}
	\tilde \Pi_k [u] = |x|^\alpha |Du|^\beta f(u), \quad {\rm in} \ \mathbb{R}^n,
	\end{equation}
for some constants $\alpha>-1$ and $\beta<1$. Letting $u(x)=v(r)$, we get
	\begin{equation}\label{5.10}
	\left(k\frac{v'(r)}{r}\right)^{\frac{n-k}{n}}\left(v''(r)+(k-1)\frac{v'(r)}{r}\right)^{\frac{k}{n}} = r^\alpha |v'(r)|^\beta f(v(r)), \quad r>0,
	\end{equation}
with $v'(0)=0$. Similar to \eqref{4.12}, we can also derive $v'(r)>0$ for $r>0$, so that the symbol of absolute value on the right hand side of \eqref{5.10} can be dispensed with. Therefore, a direct calculation leads to the Cauchy problem
	\begin{equation}\label{4.13}
	\left\{
	\begin{aligned}
	&v'(r)= \frac{ r^{1-k} }{k^{\frac{1}{1-\beta}}} \left(  \int_{0}^{r} n(1-\beta) s^{\frac{n}{k}(\alpha+\beta) -n\beta+n-1} f^{\frac{n}{k}} (v(s))ds \right )^{\frac{k}{n(1-\beta)}}, \quad r\in (0, +\infty), \\
	&v(0)=a.
	\end{aligned}
	\right.
	\end{equation}
This problem corresponds to problem \eqref{1.1} when $C=\left[ n(1-\beta) / k^{\frac{n}{k}}\right]^{\frac{k}{n(1-\beta)}} $, $q=k-1$, $\tau =\frac{n}{k}(\alpha+\beta)-n\beta+n$, $\theta = \frac{n(1-\beta)}{k}$ and $g=f^{\frac{n}{k}}$.

\subsection{Extensions to $p$-$\Pi_k$ Hessian equations}\label{Section 5.2}
In this subsection, we consider the equations in Section \ref{Section 5.1} with the Hessian matrix $D^2u$ replaced by $D(|Du|^{p-2}Du)$, where $p> 1$. In this part, a function $u\in C^2(\Omega)$ is called admissible in $\Omega$ if $\lambda(D(|Du|^{p-2}Du))\in {\mathcal P}_{k}$ for all $x\in \Omega\subset \mathbb{R}^n$.

Assume $v\in C^2[0, \infty)$ is radially symmetric with $v'(0)=0$. Then for $r=|x|=(\sum\limits_{i=1}^n x_i^2)^{1/2}$ and  $u(x)=v(r)$, the eigenvalues of $D(|Du|^{p-2}Du)$ are given by \eqref{4.14}. By direct calculations, we have
	\begin{equation}\label{5.12}
	\Pi_k[D(|Du|^{p-2}Du)]=\left(k\frac{(v'(r))^{p-1}}{r}\right)^{C_{n-1}^k}\left(((v'(r))^{p-1})'+(k-1)\frac{(v'(r))^{p-1}}{r}\right)^{C_{n-1}^{k-1}}, \quad r>0.
	\end{equation}

In place of \eqref{5.6}, we consider
	\begin{equation}\label{5.13}
	\left (\Pi_k[D(|Du|^{p-2}Du)] \right )^{\frac{1}{C_n^k}} = f(u), \quad {\rm in} \ \mathbb{R}^n.
	\end{equation}
Then the corresponding ordinary differential equation is
	\begin{equation}\label{5.14}
	\left(k\frac{(v'(r))^{p-1}}{r}\right)^{\frac{n-k}{n}}\left(((v'(r))^{p-1})'+(k-1)\frac{(v'(r))^{p-1}}{r}\right)^{\frac{k}{n}} = f(v(r)), \quad r>0,
	\end{equation}
with $v'(0)=0$. By observing the divergence structure in \eqref{5.7}, we get the Cauchy problem
	\begin{equation}\label{5.15}
	\left\{
	\begin{aligned}
	&v'(r)=r^{\frac{1-k}{p-1}} \left ( \int_{0}^{r} \frac{n}{k^{\frac{n}{k}}} s^{n-1} f^{\frac{n}{k}} (v(s))ds\right)^{\frac{k}{n(p-1)}}, \quad r\in (0, +\infty), \\
	&v(0)=a.
	\end{aligned}
	\right.
	\end{equation}
	This problem corresponds to problem \eqref{1.1} when $C=(n/ k^{\frac{n}{k}})^{\frac{n}{k(p-1)}}$, $q=\frac{k-1}{p-1}$, $\tau =n$, $\theta=\frac{n(p-1)}{k}$ and $g=f^{\frac{n}{k}}$.

	Moreover, we also consider the degenerate or singular version of equation \eqref{5.13}, namely the equation \eqref{1.6} for some constants $\alpha>-1$ and $\beta<p-1$.
	By calculations on the radial symmetric solutions of \eqref{1.6}, we get the Cauchy problem
	\begin{equation}\label{5.17}
	\left\{
	\begin{aligned}
	&v'(r)=r^{\frac{1-k}{p-1}} \left ( \int_{0}^{r} \frac{n(p-\beta-1)}{k^{\frac{n}{k}}(p-1)} s^{n\left[\frac{\alpha}{k}+(\frac{1}{k}-1)\frac{\beta}{p-1} +1\right]-1} f^{\frac{n}{k}} (v(s))ds\right)^{\frac{k}{n(p-\beta-1)}}, \quad r\in (0, +\infty), \\
	&v(0)=a.
	\end{aligned}
	\right.
	\end{equation}
	This problem corresponds to problem \eqref{1.1} when $C=\left [\frac{n(p-\beta-1)}{k^{\frac{n}{k}}(p-1)} \right ]^{\frac{n}{k(p-\beta-1)}}$, $q=\frac{k-1}{p-1}$, $\tau =n\left[\frac{\alpha}{k}+(\frac{1}{k}-1)\frac{\beta}{p-1} +1\right]$, $\theta=\frac{n(p-\beta-1)}{k}$ and $g=f^{\frac{n}{k}}$.

\subsection{Keller-Osserman theorem}\label{Section 5.3}

We consider equation \eqref{1.6} and prove its Keller-Osserman theorem in this subsection, which involve equations \eqref{5.6}, \eqref{5.9} and \eqref{5.13} as special cases. 

In order to consider equation \eqref{1.6}, we turn to consider the Cauchy problem \eqref{5.17}, which corresponds to \eqref{1.1} with $C=\left [\frac{n(p-\beta-1)}{k^{\frac{n}{k}}(p-1)} \right ]^{\frac{n}{k(p-\beta-1)}}$, $q=\frac{k-1}{p-1}$, $\tau =n\left[\frac{\alpha}{k}+(\frac{1}{k}-1)\frac{\beta}{p-1} +1\right]$, $\theta=\frac{n(p-\beta-1)}{k}$ and $g=f^{\frac{n}{k}}$. We observe that $q\ge 0$ is automatically satisfied when $p>1$; $C>0$ and $\theta>0$ are satisfied when $p>1$ and $\beta < p-1$; and $\tau > \theta q$ is satisfied when $\alpha >-1$. Thus, the basic assumptions for $\alpha$, $\beta$ and $p$ are 
\begin{equation}\label{5.18}
p>1, \ \ \alpha >-1,\ \ {\rm and} \ \beta < p-1.
\end{equation}
A further critical equality in Theorem \ref{Th 1.2} is $\tau=\theta (q+1)$, which is $p=\alpha + \beta +2$ for \eqref{1.6}. Then $\tau \ge \theta (q+1)$ and $\tau<\theta (q+1)$ correspond to 
\begin{equation}\label{5.19}
p \le \alpha +\beta +2, \ \ {\rm and} \ p> \alpha + \beta +2,
\end{equation}
respectively. In the case when $p> \alpha + \beta +2$, by Theorem \ref{Th 1.2}, the solution $v(r) \in C^2(0, +\infty) \cap W_{loc}^{2, \delta}(0, +\infty)$ with 
\begin{equation}\label{5.20}
\delta \in (1, \frac{p-\beta -1}{p-\alpha-\beta -2}).
\end{equation}
Also in this case, similar to Lemma \ref{Lemma 4.1} (ii), we still need to assume
\begin{equation}\label{5.21}
(p-1)\alpha + \beta \ge 0,
\end{equation}
in order to guarantee $|Dw|^{p-2}Dw$ is $C^1$ at $0$, where $w(x)=v(r)$.

Since \eqref{5.18}, \eqref{5.19}, \eqref{5.20} and \eqref{5.21} hold, Lemmas \ref{Lemma 4.1} and \ref{Lemma 4.2} still holds if \eqref{1.5} and \eqref{4.20} are replaced by \eqref{1.6} and \eqref{5.17}, respectively. Note that in the process of proving Lemma \ref{Lemma 4.1} under \eqref{1.6} and \eqref{5.17}, we still need the help of Lemma \ref{Lemma 2.2}.

Then we apply Theorem \ref{Th 1.2} to formulate the Keller-Osserman theorem for the $\Pi_k$-Hessian equation \eqref{1.6}.
\begin{Theorem}[Keller-Osserman theorem for $\Pi_k$-Hessian type equations]\label{Th 5.1}
Assume that $f: \mathbb{R} \to (0,+\infty)$ is a non-decreasing continuous function, $\alpha\ge \frac{k}{n}-1$, $\beta<p-1$ and $p>1$. 
\begin{itemize}
\item[(i).] If $p\le \alpha+\beta +2$, then there exists an entire nontrivial admissible subsolution $u\in C^2(\mathbb{R}^n)$ of \eqref{1.6} if and only if 
\begin{equation}\label{5.22}
\int^{+\infty} \left (\int_0^t f^{\frac{n}{k}}(s) ds \right )^{-\frac{1}{\frac{n(p-\beta -1)}{k}+1}} dt= + \infty
\end{equation}
holds;
\item[(ii).] If $p>\alpha+\beta +2$ and $(p-1)\alpha +\beta \ge 0$, then there exists an entire nontrivial admissible subsolution $u(x)\in C^2(\mathbb{R}^n\backslash \{0\})\cap  W_{loc}^{2,\delta}(\mathbb{R}^n)$ of \eqref{1.6} for $\delta \in (1, \frac{p-\beta-1}{p-\alpha-\beta-2})$ if and only if \eqref{5.22} holds.
\end{itemize}
\end{Theorem}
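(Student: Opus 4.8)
The plan is to run the argument of Theorem \ref{Th 4.1} essentially verbatim, with the $k$-Hessian Cauchy problem \eqref{4.20} replaced by its $\Pi_k$-counterpart \eqref{5.17} and with the $\Pi_k$-versions of Lemmas \ref{Lemma 4.1} and \ref{Lemma 4.2} (valid here, as noted just before the statement, because \eqref{5.18}--\eqref{5.21} hold). As there, it suffices to prove (i): the proof of (ii) is word-for-word the same once the regularity at the origin is handled. First I would recall that \eqref{5.17} is exactly problem \eqref{1.1} with $C=[n(p-\beta-1)/(k^{n/k}(p-1))]^{n/(k(p-\beta-1))}$, $q=\tfrac{k-1}{p-1}$, $\tau=n[\tfrac{\alpha}{k}+(\tfrac1k-1)\tfrac{\beta}{p-1}+1]$, $\theta=\tfrac{n(p-\beta-1)}{k}$ and $g=f^{n/k}$.

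The one genuine computation is the translation of the hypotheses on $\alpha,\beta,p$ into the hypotheses of Theorem \ref{Th 1.2}. The conditions $C>0$, $q\ge0$, $\theta>0$ are immediate from $p>1$ and $\beta<p-1$. A short algebraic simplification gives $\tau-\theta q=\tfrac{n(\alpha+1)}{k}$ and $\theta(q+1)-\tau=\tfrac{n(p-\alpha-\beta-2)}{k}$; hence $\tau\ge\theta q+1$ is exactly $\alpha\ge\tfrac{k}{n}-1$, and $\tau\ge\theta(q+1)$ (resp. $\tau<\theta(q+1)$) is exactly $p\le\alpha+\beta+2$ (resp. $p>\alpha+\beta+2$). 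In the latter case the Sobolev exponent $\tfrac{\theta}{\theta(q+1)-\tau}$ from Theorem \ref{Th 1.2}(iii) equals $\tfrac{p-\beta-1}{p-\alpha-\beta-2}$, matching the statement. Finally, with $g=f^{n/k}$ and $\theta=\tfrac{n(p-\beta-1)}{k}$ the condition \eqref{KO} reads precisely \eqref{5.22}.

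Assembling the pieces: in case (i) the hypothesis $p\le\alpha+\beta+2$ puts us in part (i) or (ii) of Theorem \ref{Th 1.2}, so \eqref{5.17} has an entire solution $v\in C^2[0,+\infty)$ for some initial value $a$ if and only if \eqref{5.22} holds. By the $\Pi_k$-version of Lemma \ref{Lemma 4.2}(i), the existence of such a $v$ for some $a$ is in turn equivalent to the existence of an entire nontrivial admissible subsolution $u\in C^2(\mathbb{R}^n)$ of \eqref{1.6}; chaining the two equivalences completes case (i). Case (ii) is the same with part (iii) of Theorem \ref{Th 1.2}, using $(p-1)\alpha+\beta\ge0$ to apply the $\Pi_k$-analogue of Lemma \ref{Lemma 4.1}(ii) (which in turn uses Lemma \ref{Lemma 2.2} to see $|Dw|^{p-2}Dw\in C^1$ at the origin, $w(x)=v(|x|)$) together with the $\Pi_k$-version of Lemma \ref{Lemma 4.2}(ii).

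The only step that needs care is the parameter bookkeeping — verifying the two identities $\tau-\theta q=\tfrac{n(\alpha+1)}{k}$ and $\theta(q+1)-\tau=\tfrac{n(p-\alpha-\beta-2)}{k}$, and checking that \eqref{KO} for the translated data coincides with \eqref{5.22} — together with the nontriviality of the subsolution, which is obtained exactly as in the proof of Lemma \ref{Lemma 4.2}: a radial $v$ with $v'(0)=0$ and $v'>0$ is nontrivial, and in the necessity direction one exploits the freedom in the initial value $a$ (taking $a=u(0)/2$, $2u(0)$, or $-1$ according to the sign of $u(0)$) to reach a contradiction from $u(0)\le v(0)=a$.
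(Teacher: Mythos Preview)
Your proposal is correct and follows essentially the same route as the paper's proof, which simply says the argument of Theorem \ref{Th 4.1} carries over with the $\Pi_k$-versions of Lemmas \ref{Lemma 4.1} and \ref{Lemma 4.2}, and that \eqref{KO} with $g=f^{n/k}$, $\theta=\tfrac{n(p-\beta-1)}{k}$ yields \eqref{5.22}. You have in fact spelled out the parameter bookkeeping (the identities $\tau-\theta q=\tfrac{n(\alpha+1)}{k}$ and $\theta(q+1)-\tau=\tfrac{n(p-\alpha-\beta-2)}{k}$) more explicitly than the paper does.
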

\begin{proof}
The proof follows that of Theorem \ref{Th 4.1}. Now the Keller-Osserman condition \eqref{KO} with $g=f^{\frac{n}{k}}$ and $\theta=\frac{n}{k}(p-\beta -1)$ gives the condition \eqref{5.22}.
\end{proof}

Since Lemmas \ref{Lemma 4.1} and \ref{Lemma 4.2} still holds if \eqref{1.5} and \eqref{4.20} are replaced by \eqref{1.6} and \eqref{5.17}, respectively, we also apply Theorem \ref{Th 1.3} to formulate the following theorem for the $\Pi_k$-Hessian equation \eqref{1.6}:
\begin{Theorem}\label{Th 5.2}
Assume that $f: \mathbb{R} \to (0,+\infty)$ is a non-decreasing continuous function, $-1< \alpha < \frac{k}{n}-1$, $\beta<p-1$ and $p>1$. 
\begin{itemize}
\item[(a).] If $f$ satisfies 
	\begin{equation}\label{5.23}
	\int^{+\infty} \left ( \int_0^t f^{\frac{\kappa n}{k} }(s)ds \right )^{-\frac{1}{\frac{\kappa n(p-\beta -1)}{k}+1}} dt = + \infty,
	\end{equation}
where $\kappa =\frac{1-\epsilon}{\frac{n(\alpha+1)}{k} - \epsilon}$ for some constant $\epsilon\in (0, \frac{n(\alpha+1)}{k})$, then there exists an entire nontrivial admissible subsolution $u(x)\in C^1(\mathbb{R}^n)\cap C^2(\mathbb{R}^n\backslash \{0\})$ of \eqref{1.6}. Moreover, the following properties hold:
\begin{itemize}
\item[(i).] If $p\le \alpha +\beta +2$, then $u(x) \in C^2 (\mathbb{R}^n)$; 

\item[(ii).] If $p> \alpha + \beta +2$, $(p-1)\alpha + \beta \ge 0$, then $u(x)\in C^2(\mathbb{R}^n\backslash \{0\})\cap  W_{loc}^{2,\delta}(\mathbb{R}^n)$ for $\delta \in (1, \frac{p-\beta-1}{p-\alpha-\beta-2})$.
\end{itemize}

\item[(b).] If the equation \eqref{1.6} admits an entire nontrivial admissible subsolution $u$ satisfying (i) or (ii) in (a), then the condition \eqref{KO} holds.
\end{itemize}
\end{Theorem}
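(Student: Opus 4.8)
The plan is to mirror the proof of Theorem~\ref{Th 4.2}, reducing the whole statement to Theorem~\ref{Th 1.3} together with the $\Pi_k$-version of Lemma~\ref{Lemma 4.2}. First I would recall that the radial form of \eqref{1.6} is the Cauchy problem \eqref{5.17}, which is exactly \eqref{1.1} with $C=\left[\frac{n(p-\beta-1)}{k^{n/k}(p-1)}\right]^{\frac{n}{k(p-\beta-1)}}$, $q=\frac{k-1}{p-1}$, $\tau=n\left[\frac{\alpha}{k}+(\frac{1}{k}-1)\frac{\beta}{p-1}+1\right]$, $\theta=\frac{n(p-\beta-1)}{k}$ and $g=f^{n/k}$. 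Under the standing hypotheses $p>1$, $\beta<p-1$ one immediately has $C>0$, $q\ge0$, $\theta>0$, so only the position of $\tau$ relative to $\theta q$ and $\theta q+1$ remains to be pinned down.

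The second step is the arithmetic identity $\tau-\theta q=\frac{n(\alpha+1)}{k}$, obtained by substituting the values above and simplifying; the terms involving $\beta$ cancel through the common factor $1-k$. Consequently the assumption $-1<\alpha<\frac{k}{n}-1$ is equivalent to $0<\tau-\theta q<1$, i.e. $\theta q<\tau<\theta q+1$, which is precisely the range treated in Theorem~\ref{Th 1.3}. Moreover the exponent $\kappa=\frac{1-\epsilon}{\tau-\theta q-\epsilon}$ from \eqref{1.3} becomes $\kappa=\frac{1-\epsilon}{\frac{n(\alpha+1)}{k}-\epsilon}$ with $\epsilon\in(0,\frac{n(\alpha+1)}{k})$, and since $g^{\kappa}=f^{\kappa n/k}$ and $\kappa\theta+1=\frac{\kappa n(p-\beta-1)}{k}+1$, condition \eqref{1.3} is literally condition \eqref{5.23}; similarly \eqref{KO} with $g=f^{n/k}$ and $\theta=\frac{n(p-\beta-1)}{k}$ is the condition in part (b).

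The third step is to invoke the equivalence between entire admissible subsolutions of \eqref{1.6} and entire solutions of \eqref{5.17}. As noted in the paragraph preceding Theorem~\ref{Th 5.1}, Lemmas~\ref{Lemma 4.1} and~\ref{Lemma 4.2} remain valid verbatim once \eqref{1.5} and \eqref{4.20} are replaced by \eqref{1.6} and \eqref{5.17}; the only extra input is Lemma~\ref{Lemma 2.2}, used in the borderline case $(p-1)\alpha+\beta=0$ to show that $|Dw|^{p-2}Dw\in C^{1}$ at the origin with $w(x)=v(|x|)$. With this in hand, part (a) follows: \eqref{5.23} forces \eqref{5.17} to admit an entire solution by Theorem~\ref{Th 1.3}(a), and the $\Pi_k$-analogue of Lemma~\ref{Lemma 4.2} converts it into an entire subsolution of \eqref{1.6}, whose regularity in cases (i) and (ii) is read off from cases (i)/(ii) and (iii) of Theorem~\ref{Th 1.3} through the dictionary $p\le\alpha+\beta+2\Leftrightarrow\tau\ge\theta(q+1)$. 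For part (b), an entire subsolution with the stated regularity produces, via the same equivalence, an entire solution of \eqref{5.17}, and Theorem~\ref{Th 1.3}(b) then yields \eqref{KO}.

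The main obstacle I anticipate is not analytic but organizational: one must check that the regularity hypotheses of the $\Pi_k$-version of Lemma~\ref{Lemma 4.2}(ii) — in particular $(p-1)\alpha+\beta\ge0$ and $\delta\in(1,\frac{p-\beta-1}{p-\alpha-\beta-2})$ — coincide exactly with those imposed in case (ii) here, and that the degenerate subcase $(p-1)\alpha+\beta=0$, where Lemma~\ref{Lemma 2.2} is invoked, is correctly folded in. Once the parameter dictionary above is established, the remainder is a direct transcription of the proof of Theorem~\ref{Th 4.2}.
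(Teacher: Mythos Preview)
Your proposal is correct and follows exactly the paper's approach: the paper's proof is the one-line remark that Theorem~\ref{Th 5.2} follows by combining Theorem~\ref{Th 1.3} with Lemma~\ref{Lemma 4.2} (with \eqref{1.5} and \eqref{4.20} replaced by \eqref{1.6} and \eqref{5.17}), and your write-up simply unpacks that reduction and the parameter dictionary in detail.
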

\begin{proof}
The proof can be done directly by combining Theorem \ref{Th 1.3} and Lemma \ref{Lemma 4.2} (with \eqref{1.5} replaced by \eqref{1.6}).
\end{proof}

\vspace{3mm}

\section{Remarks and Examples}\label{Section 6}
In this section, we present some remarks related to the Cauchy problem \eqref{1.1} as well as the $k$-Hessian and $\Pi_k$-Hessian equations. We show some explicit examples of $f$ for \eqref{1.5} and \eqref{1.6}, and for the entire subsolutions of \eqref{1.6}.

\begin{remark}\label{Re 6.1}
When treating the partial differential equations in \cite{K1957, O1957, NU1997, JB2010, BF2022, JJL2024}, the parameter $\theta$ in the corresponding Cauchy problem \eqref{1.1} is always required to be larger than or equal to $1$. In this paper, $\theta=k$ in \eqref{4.7} and $\theta=\frac{n}{k}$ in \eqref{5.6} are both larger than or equal to $1$. Note that our $\theta$ for equations \eqref{4.10}, \eqref{4.16}, \eqref{1.5}, \eqref{5.9}, \eqref{5.13} and \eqref{1.6} may be less than $1$. For this purpose, we have already discussed the $0<\theta <1$ case separately; see Case 2 in the proof of Theorem \ref{Th 1.1}. Therefore, our result in this paper can cover all positive $\theta$.  
\end{remark}

\begin{remark}\label{Re 6.2}
Note that Theorem \ref{Th 5.1} for the $\Pi_k$-Hessian equations is new. When $k=n$, Theorem \ref{Th 5.1} reduces to the results for the classical Keller-Osserman condition of the Laplace type equation in \cite{K1957, O1957}. When $k=1$, our result agree with those in \cite{JB2010} for the Monge-Amp\`ere equations. We remark that the $k=n-1$ case also has concrete application to the $(n-1)$ Monge-Amp\`ere equation, $\det^{\frac{1}{n}}(\Delta u I - D^2u)=f(u)$, which has attracted some research interest in recent years; see \cite{JL2021, TW2016}. The Keller-Osserman condition for the $(n-1)$ Monge-Amp\`ere equation is established in \cite{JJL2024}. It is easily checked that $\det^{\frac{1}{n}}(\Delta u I - D^2u)=\tilde P_{n-1}(D^2u)$. Therefore, Theorem \ref{Th 5.1} here embraces the corresponding results in \cite{JJL2024} as a special case when $k=n-1$ and $\alpha=\beta =0$. When the integral in the Keller-Osserman condition converges, the boundary blow-up problem for the  $(n-1)$ Monge-Amp\`ere equation in a bounded domain of $\mathbb{R}^n$ is investigated in \cite{JDJ2025}. 
\end{remark}

\begin{remark}\label{Re 6.3}
The operator $S_k [D(|Du|^{p-2}Du)]$ was first introduced by the second author and X.-J. Wang in \cite{TW1999}.It reduces to the $p$-Laplacian operator when $k=1$, and to the $k$-Hessian operator when $p=2$. Therefore, the main results, Theorems \ref{Th 4.1} and \ref{Th 5.1} have similarities to those for the $p$-Laplacian equations.
For example, when we apply Theorem \ref{Th 4.1} to equation \eqref{4.16}, the case of entire subsolutions in $C^2(\mathbb{R}^n)$ is $1<p\le 2$, and the case of entire subsolutions in $C^2(\mathbb{R}^n\backslash \{0\})\cap  W_{loc}^{2,\delta}(\mathbb{R}^n)$ is $p>2$. For the particular equation \eqref{4.16}, $p>2$ case was studied in \cite{BF2022}, and the standard $k$-Hessian case when $p=2$ was considered in \cite{JB2010}. Up to our knowledge, the result here in the $1<p<2$ case for equation \eqref{4.16} is new.
\end{remark}

\begin{remark}\label{Re 6.4}
This paper applies the Cauchy problem \eqref{1.1} to two kinds of nonlinear partial differential equations, the $k$-Hessian type equations and the $\Pi_k$-Hessian type equations. Comparing the forms of $p$, $q$, $\tau$ and $\theta$ in \eqref{4.20} and \eqref{5.17}, we see the ``duality'' between these two kinds of equations. Namely, by replacing $k$ in $p$, $q$, $\tau$ and $\theta$ in \eqref{4.20} by $\frac{n}{k}$, we get the corresponding $p$, $q$, $\tau$ and $\theta$ in \eqref{5.17}; while replacing $k$ in $p$, $q$, $\tau$ and $\theta$ in \eqref{5.17} by $\frac{n}{k}$, we get the corresponding $p$, $q$, $\tau$ and $\theta$ in \eqref{4.20}. Therefore, the Keller-Osserman conditions \eqref{4.31} and \eqref{5.22} for $k$-Hessian type equations \eqref{1.5} and $\Pi_k$-Hessian type equations \eqref{1.6} respectively also have such duality. Due to this kind of duality, it is interesting to note that the $m$-Hessian type equation $S_m^{\frac{1}{m}} [D(|Du|^{p-2}Du)] = |x|^\alpha |Du|^\beta f(u)$ and the $\Pi_k$-Hessian type equation \eqref{1.6} have exactly the same Keller-Osserman condition when $km=n$. In the $km=n$ case, the more general equation,
\begin{equation}
A S_m^{\frac{1}{m}} [D(|Du|^{p-2}Du)] + B \Pi_k^{\frac{1}{C_n^k}} [D(|Du|^{p-2}Du)] = |x|^\alpha |Du|^\beta f(u)
\end{equation}
with $A(C_{n-1}^{m-1})^{\frac{1}{m}}+ B k^{1-\frac{1}{m}}>0$, has the Keller-Osseman condition 
\begin{equation}
\int^{+\infty} \left (\int_0^t f^{m}(s) ds \right )^{-\frac{1}{m(p-\beta -1)}+1} = + \infty,
\end{equation}
where $A$ and $B$ are constants.
\end{remark}

\begin{remark}\label{Re 6.5}
Theorems \ref{Th 4.1} and \ref{Th 5.1} can be extended to more general equations
\begin{equation}\label{6.3}
	S_k^{\frac{1}{k}} [D(|Du|^{p-2}Du)] = |x|^\alpha |Du|^\beta (x\cdot Du)^\gamma f(u), 
\end{equation}
and 
\begin{equation}\label{6.4}
	\left (\Pi_k[D(|Du|^{p-2}Du)] \right )^{\frac{1}{C_n^k}} = |x|^\alpha  |Du|^\beta (x\cdot Du)^\gamma f(u), 
\end{equation}
respectively. The Cauchy problems for equations \eqref{6.3} and \eqref{6.4} are \eqref{4.20} and \eqref{5.17}, with $\alpha$ and $\beta$ replaced by $\alpha+\gamma$ and $\beta+\gamma$ respectively. Thus, we can derive the extended versions of Theorems \ref{Th 4.1} and \ref{Th 5.1}.
\end{remark}

\begin{remark}\label{Re 6.6}
Note that Theorem \ref{Th 1.1} is proved when $g$ is a positive continuous function.
Alternatively, if $g$ is nonnegative and satisfies $g(0)=0$, $g(t)>0$ for $t>0$, a positive solution $v$ of \eqref{1.1} with the initial value $a\ge0$ will also be admissible. Therefore, Theorem \ref{Th 1.1} still holds in this case.
We then remark that the assumption for $g$ throughout this paper can be replaced by $g: \mathbb{R} \to [0, \infty)$ being a nonnegative and non-decreasing function satisfying $g(0)=0$, $g(t)>0$ for $t>0$. Consequently, the function $f$ in Theorems \ref{Th 4.1} and \ref{Th 5.1} also has the same alternative conditions on $g$. Moreover, in Theorems \ref{Th 4.1} and \ref{Th 5.1}, ``an entire nontrivial admissible subsolution'' should be replaced by ``a nonnegative entire nontrivial admissible subsolution'' when $a=0$, and by ``a positive entire nontrivial admissible subsolution'' when $a>0$. 
\end{remark}

Next, we gave some explicit examples of $f$ satisfying the two alternative assumptions in Remark \ref{Re 6.6}. We present them in the following two corollaries.
\begin{Corollary}\label{Co 1.1}
Let $c\geq 0$ and $f(t)=e^{c t}$, under the assumptions of Theorem \ref{Th 4.1} or Theorem \ref{Th 4.2}  (Theorem \ref{Th 5.1} or Theorem \ref{Th 5.2}), then \eqref{1.5} (or \eqref{1.6}) admits an entire nontrivial admissible subsolution  if and only if 
$$c=0.$$
\end{Corollary}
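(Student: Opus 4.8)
The plan is to deduce the corollary directly from the Keller--Osserman criteria of Sections \ref{Section 4} and \ref{Section 5}, reducing the whole matter to the evaluation of one elementary integral. Recall that for \eqref{1.5} the existence of an entire nontrivial admissible subsolution is equivalent to \eqref{4.31} when $\alpha\ge\frac1k-1$ (Theorem \ref{Th 4.1}), and is controlled by the pair \eqref{4.32}/\eqref{KO} when $-1<\alpha<\frac1k-1$ (Theorem \ref{Th 4.2}); the corresponding statements for \eqref{1.6} are \eqref{5.22}, resp.\ \eqref{5.23}/\eqref{KO}, via Theorems \ref{Th 5.1} and \ref{Th 5.2}. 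For $f(t)=e^{ct}$ and any exponent $m>0$,
\begin{equation*}
\int_0^t f^m(s)\,ds=\int_0^t e^{cms}\,ds=
\begin{cases} t,&c=0,\\[1mm] \dfrac{e^{cmt}-1}{cm},&c>0,\end{cases}
\end{equation*}
and this single identity drives both directions.

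First I would handle $c=0$. Then $f\equiv1$, so in each of \eqref{4.31}, \eqref{4.32}, \eqref{5.22}, \eqref{5.23}, \eqref{KO} the inner integral is $t$ and the integrand becomes $t^{-1/(\Theta+1)}$ for the relevant positive constant $\Theta$ (namely $k(p-\beta-1)$, $\kappa k(p-\beta-1)$, $\frac nk(p-\beta-1)$, $\frac{\kappa n}{k}(p-\beta-1)$; positivity follows from $p>1$, $\beta<p-1$ and $\kappa>0$, the latter because $0<k(\alpha+1)<1$ forces $\epsilon\in(0,1)$). Since $\frac1{\Theta+1}\in(0,1)$, the tail integral diverges, so the relevant condition holds and the ``if'' part of Theorem \ref{Th 4.1}/\ref{Th 4.2} (resp.\ \ref{Th 5.1}/\ref{Th 5.2}) yields the radial subsolution $u(x)=v(|x|)$, which is nontrivial because $v'>0$ on $(0,\infty)$.

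Next, for $c>0$ the inner integral $\int_0^t f^m$ grows like $e^{cmt}$, so in every one of those conditions the integrand is bounded by $C\,e^{-cmt/(\Theta+1)}$ for large $t$, and the tail integral converges. Hence \eqref{4.31} fails (ruling out a subsolution when $\alpha\ge\frac1k-1$ by Theorem \ref{Th 4.1}) and \eqref{KO} also fails (its integrand is exponentially small too), so the contrapositive of Theorem \ref{Th 4.2}(b) rules one out when $-1<\alpha<\frac1k-1$; repeating the argument with $m=\frac nk$ settles \eqref{1.6} through Theorems \ref{Th 5.1} and \ref{Th 5.2}. Putting the two cases together gives ``existence $\iff c=0$''.

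The one place deserving attention is the range $-1<\alpha<\frac1k-1$ (resp.\ $-1<\alpha<\frac kn-1$), where the sufficient condition \eqref{4.32} and the necessary condition \eqref{KO} are a priori distinct, so one must check that they collapse to the same dichotomy in $c$ for $f=e^{ct}$. This is precisely what the computation above shows: for $c=0$ both reduce to a divergent $\int^{\infty}t^{-\gamma}\,dt$ with $\gamma\in(0,1)$, while for $c>0$ both have an exponentially decaying integrand and converge. Beyond this bookkeeping the proof is routine integral estimation, and I do not anticipate any real obstacle.
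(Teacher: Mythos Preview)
Your proposal is correct and follows essentially the same route as the paper: both reduce the question to checking convergence or divergence of the Keller--Osserman integrals \eqref{4.31}/\eqref{5.22} (and, in the Theorem~\ref{Th 4.2}/\ref{Th 5.2} range, their $\kappa$-variants together with \eqref{KO}) for $f(t)=e^{ct}$, finding divergence for $c=0$ and convergence for $c>0$. The only cosmetic difference is that the paper establishes convergence for $c>0$ via the limit comparison $\lim_{t\to\infty}t^2 F(t)=0$, whereas you invoke the explicit exponential bound $C e^{-cmt/(\Theta+1)}$; the substance is identical.
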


Note that Corollary \ref{Co 1.1} ($c =0$) confirms the existence of an entire subsolution $u$ of equations $S_k^{\frac{1}{k}} [D(|Du|^{p-2}Du)] = |x|^\alpha |Du|^\beta$ and $\left (\Pi_k[D(|Du|^{p-2}Du)] \right )^{\frac{1}{C_n^k}} = |x|^\alpha  |Du|^\beta$. It would be interesting to further study the Liouville type results for entire solutions to these equations in $\mathbb{R}^n$.

\begin{Corollary}\label{Co 1.2}
	Let $d\geq 0$ and
	\begin{equation*}
		f(t)=
		\left\{
		\begin{split}
			&t^d,&t>0,\\
			&0,&t\leq0,
		\end{split}
		\right.
	\end{equation*} under the assumptions of Theorem \ref{Th 4.1} or Theorem \ref{Th 4.2} (Theorem \ref{Th 5.1} or Theorem \ref{Th 5.2}).  Then \eqref{1.5} (or \eqref{1.6}) admits a nonnegative nontrivial admissible subsolution $u$ if and only if 
	$$d\leq p-\beta -1.$$
\end{Corollary}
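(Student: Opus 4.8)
The plan is to reduce the assertion to evaluating, for the power nonlinearity $f(t)=t^d$, the Keller--Osserman-type integral conditions appearing in Theorems \ref{Th 4.1}, \ref{Th 4.2} (for \eqref{1.5}) and in Theorems \ref{Th 5.1}, \ref{Th 5.2} (for \eqref{1.6}). First I would note that this $f$ is nonnegative and non-decreasing with $f(0)=0$ and $f(t)>0$ for $t>0$, so it satisfies the alternative hypothesis of Remark \ref{Re 6.6}; hence the four theorems apply in the form stated there and, whenever their integral condition holds, produce a nonnegative nontrivial entire admissible subsolution in the regularity class provided by the applicable case (i) or (ii). It thus suffices to determine exactly when each integral condition is satisfied by $f(t)=t^d$.

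The single computation I would record is the elementary identity
\[
\int_0^t f^m(s)\,ds = \int_0^t s^{md}\,ds = \frac{t^{md+1}}{md+1}, \qquad t>0,
\]
valid for every $m>0$, which gives for every $\sigma>0$
\[
\left(\int_0^t f^m(s)\,ds\right)^{-\frac{1}{m\sigma+1}} = (md+1)^{\frac{1}{m\sigma+1}}\, t^{-\frac{md+1}{m\sigma+1}}.
\]
The integral of this function over a neighbourhood of $+\infty$ therefore diverges if and only if $\frac{md+1}{m\sigma+1}\le 1$, i.e.\ if and only if $md\le m\sigma$, i.e.\ if and only if $d\le\sigma$, the exponent $m$ being irrelevant to the dichotomy.

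I would then apply this with $\sigma=p-\beta-1$, which is positive since $\beta<p-1$. For \eqref{1.5} with $\alpha\ge\frac1k-1$: condition \eqref{4.31} of Theorem \ref{Th 4.1} is the case $m=k$, hence equivalent to $d\le p-\beta-1$, and since that theorem is a genuine characterization the claim follows. For \eqref{1.5} with $-1<\alpha<\frac1k-1$: Theorem \ref{Th 4.2}(a) gives existence under \eqref{4.32}, which is the case $m=\kappa k$, hence again equivalent to $d\le p-\beta-1$; conversely Theorem \ref{Th 4.2}(b) shows that any such subsolution forces \eqref{KO}, which for problem \eqref{4.20} (where $g=f^k$, $\theta=k(p-\beta-1)$) is once more the case $m=k$, hence equivalent to $d\le p-\beta-1$; combining, existence is equivalent to $d\le p-\beta-1$ in this range too. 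The treatment of \eqref{1.6} is identical after the substitution $k\mapsto n/k$: condition \eqref{5.22} is the case $m=n/k$, condition \eqref{5.23} the case $m=\kappa n/k$, and \eqref{KO} for problem \eqref{5.17} (where $g=f^{n/k}$, $\theta=\frac nk(p-\beta-1)$) the case $m=n/k$, all three equivalent to $d\le p-\beta-1$.

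There is no genuine obstacle here; the one point deserving care is that for Theorems \ref{Th 4.2} and \ref{Th 5.2} the sufficient condition (\eqref{4.32}, resp.\ \eqref{5.23}) and the necessary condition \eqref{KO} are \emph{a priori} distinct, and one must verify that for the power nonlinearity both collapse to the \emph{same} inequality $d\le p-\beta-1$. As the computation above shows, this happens precisely because the extra exponent $m$ (namely $\kappa k$ in place of $k$, or $\kappa n/k$ in place of $n/k$, with $\kappa>0$) drops out of the divergence criterion, so the auxiliary parameters $\kappa$ and $\epsilon$ play no role. This is exactly what upgrades the one-sided conclusions of Theorems \ref{Th 4.2} and \ref{Th 5.2} to the sharp characterization $d\le p-\beta-1$.
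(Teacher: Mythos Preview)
Your proposal is correct and follows essentially the same approach as the paper: reduce to the Keller--Osserman integral for $f(t)=t^d$, compute it explicitly, and read off the divergence criterion $d\le p-\beta-1$. The paper only writes out the case of \eqref{1.6} under Theorem~\ref{Th 5.1} and then asserts that \eqref{5.23} ``has the same convergence as \eqref{5.22}'' for this $f$; your version is slightly cleaner in that you introduce the general exponent $m$ and observe once and for all that it cancels from the divergence threshold, which handles Theorems~\ref{Th 4.1}, \ref{Th 4.2}, \ref{Th 5.1}, \ref{Th 5.2} uniformly and makes explicit why the a priori distinct sufficient condition \eqref{4.32}/\eqref{5.23} and necessary condition \eqref{KO} collapse to the same inequality.
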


\begin{remark}\label{Re 6.7}
Note that for equation \eqref{1.5} in Corollary \ref{Co 1.1} (equation \eqref{1.6} in Corollary \ref{Co 1.2}), the regularity of the subsolution is the same as that in Theorem \ref{Th 4.1} or Theorem \ref{Th 4.2}  (Theorem \ref{Th 5.1} or Theorem \ref{Th 5.2}).
\end{remark}

We first prove Corollary \ref{Co 1.1}, concerning exponential nonlinearities on the right hand side.
\begin{proof}[Proof of Corollary \ref{Co 1.1}]
We only present the proof for equation \eqref{1.6}. The proof for equation \eqref{1.5} is similar.

Since $c\geq0$, it is clear that $f\in C(\mathbb{R})$ is a positive and non-decreasing function. From Theorem \ref{Th 5.1}, entire subsolutions of \eqref{1.6} exist if and only if \eqref{5.22} holds, that is,
\begin{equation}\label{5.1}
	\int_a^{+\infty}\left( \int_{0}^{t}e^{\frac{nc}{k}s}ds\right)^{-\frac{1}{\frac{n(p-\beta -1)}{k}+1}}dt=+\infty,
\end{equation}where $a$ is an arbitrary constant in $\mathbb{R}$.
When $c=0$, \eqref{5.1} is obvious.
When $c>0$, we have
\begin{equation*}
	\int_a^{+\infty}\left( \int_{0}^{t}e^{\frac{nc}{k}s}ds\right) ^{-\frac{1}{\frac{n(p-\beta -1)}{k}+1}}dt=\left(\frac{k}{nc}\right)^{-\frac{1}{\frac{n(p-\beta -1)}{k}+1}}\int_a^{+\infty}(e^{\frac{nc}{k}t}-1) ^{-\frac{1}{\frac{n(p-\beta -1)}{k}+1}}dt.
\end{equation*}
Let $F(t):=(e^{\frac{nc}{k}t}-1)^{-\frac{1}{\frac{n(p-\beta -1)}{k}+1}}$ for $t\geq a$.
Since
\begin{equation*}
	\lim_{t\to +\infty}t^2F(t)=	
	\lim_{t\to +\infty}\frac{t^2}{(e^{\frac{nc}{k}t}-1)^{\frac{1}{\frac{n(p-\beta -1)}{k}+1}}}
	=\left( \lim_{t\to +\infty}\frac{t^{\frac{2n(p-\beta -1)}{k}+2}}{e^{\frac{nc}{k}t}-1}\right) ^{\frac{1}{\frac{n(p-\beta -1)}{k}+1}}=0,\quad c>0,
\end{equation*}
we obtain that the integral $\int_a^\infty F(t)dt$ converges, which shows that \eqref{5.1} fails for $c>0$.

\vspace{1mm}

In order to apply Theorem \ref{Th 5.2}, we need to check the integral in \eqref{5.23}. It is readily checked that \eqref{5.23} has the same convergence as \eqref{5.22} for $f(t)=e^{c t}$ with $c\ge 0$. Then it leads to the same conclusion as above.
\end{proof}

We now prove Corollary \ref{Co 1.2}, for power type nonlinearities on the right hand side.
\begin{proof}[Proof of Corollary \ref{Co 1.2}]
We only present the proof for equation \eqref{1.6}. The proof for equation \eqref{1.5} is the same.

It is obvious that when $d\geq0$, $f\in C[0, \infty)$ is a nonnegative and non-decreasing function satisfying $f(0)=0$, $f(t)>0$ for $t>0$.
From Theorem \ref{Th 5.1}, the entire subsolutions of \eqref{1.6} exist if and only if \eqref{5.22} holds.
We verify that $f$ satisfies the condition \eqref{5.22}, that is
\begin{equation}\label{5.2}
	\int_{a^\prime}^{+\infty}\left( \int_{0}^{t}s^{\frac{nd}{k}}ds\right) ^{-\frac{1}{\frac{n(p-\beta -1)}{k}+1}}dt=+\infty,
\end{equation}where $a^\prime$ is an arbitrary constant in $\mathbb{R^+}$.
Since $f(t)=t^d$ for $t\geq0$, we have
\begin{equation}\label{5.3}
	\int_{a^\prime}^{+\infty}\left( \int_{0}^{t}s^{\frac{nd}{k}}ds\right) ^{-\frac{1}{\frac{n(p-\beta -1)}{k}+1}}dt
	=\left(\frac{k}{nd+k}\right) ^{-\frac{1}{\frac{n(p-\beta -1)}{k}+1}}\int_{a^\prime}^{+\infty} t^{-\frac{nd+k}{n(p-\beta-1)+k}} dt.
\end{equation}
It is readily checked that the infinite integral $\int_{a^\prime}^{+\infty} t^{-\frac{nd+k}{n(p-\beta -1)+k}} dt$ in \eqref{5.3} diverges for $d\le p-\beta-1$ and converges for $d>p-\beta-1$.
Applying Theorem \ref{Th 5.1} and Remark \ref{Re 6.6}, the proof of Corollary \ref{Co 1.2} is complete for equation \eqref{1.6}.

\vspace{1mm}

In order to apply Theorem \ref{Th 5.2}, we need to check the integral in \eqref{5.23}. It is readily checked that \eqref{5.23} has the same convergence as \eqref{5.22} for $f(t)=t^d$ when $t>0$ and $f(t)=0$ when $t\le 0$. Accordingly it leads to the same conclusion as above.
\end{proof}

To end this section, we give some explicit entire subsolutions satisfying Corollaries \ref{Co 1.1} and \ref{Co 1.2}. 
These examples are for the $\Pi_k$-Hessian type equations. The examples for the $k$-Hessian equations are similar.

The first two examples of entire subsolutions are for the standard $\Pi_k$-Hessian equation \eqref{5.6}. One is for the exponential nonlinearity and the other is for the  power type nonlinearity.
\begin{Example}\label{Example 6.1}
For any $a \ge \frac{1}{k}$, $b\in \mathbb{R}^n$ and $j\in \mathbb{R}$, 
\begin{equation}\label{example1}
u(x)=\frac{1}{2}a x^2 + b\cdot x + j
\end{equation} 
is an entire admissible subsolution of \eqref{5.6} with $f(t)=e^{c t}$ with $c=0$.
\end{Example}

\begin{Example}\label{Example 6.2}
For $A\ge \frac{1}{k}$, 
\begin{equation}\label{example2}
u(x)=e^{\frac{1}{2}A|x|^2}
\end{equation}
is an entire admissible subsolution of \eqref{5.6} with $f(t)=t^d$ with $d \in [0, 1]$.
\end{Example}

The next three examples of entire subsolutions are for the $\Pi_k$-Hessian type equation \eqref{1.6}.
\begin{Example}\label{Example 6.3}
For any $a \ge \left(\frac{1}{k}\right)^{\frac{1}{3}}$ and $j\in \mathbb{R}$,
\begin{equation}\label{example3}
u(x)=\frac{1}{2}a x^2+j
\end{equation} 
is an entire admissible subsolution of \eqref{1.6} with $\alpha=2$, $\beta=-2$, $p=2$ and $f(t)=e^{c t}$ ($c=0$).
\end{Example}

\begin{Example}\label{Example 6.4}
For any $a \ge \left( (k+\frac{1}{2})C_{n-1}^{k-1}+ k C_{n-1}^k \right)^{-3}$ and $j\in \mathbb{R}$, 
\begin{equation}\label{example4}
u(x)=\frac{1}{4}a |x|^4+j
\end{equation} 
is an entire admissible subsolution of \eqref{1.6} with $\alpha=0$, $\beta=\frac{1}{6C_n^k}$, $p=\frac{3}{2}$ and $f(t)=e^{c t}$ ($c=0$).
\end{Example}

\begin{Example}\label{Example 6.5}
For any $a \ge \left( (k+\frac{9}{2})C_{n-1}^{k-1}+ k C_{n-1}^k \right)^{-\frac{1}{2}}$ and $j\in \mathbb{R}$,
\begin{equation}\label{example5}
u(x)=\frac{2}{3}a |x|^\frac{3}{2}+j
\end{equation} 
is an entire admissible subsolution of \eqref{1.6} with $\alpha=0$, $\beta=\frac{9}{C_n^k}$, $p=11$ and $f(t)=e^{c t}$ ($c=0$), where $k=1, 2 \cdots, n-1$. Note that the function $u$ in \eqref{example5} belongs to $C^2(\mathbb{R}^n\backslash \{0\})\cap W_{loc}^{2,\delta}(\mathbb{R}^n)$ for some $\delta \in (1, \frac{10C_n^k-9}{9(C_n^k-1)})$.
\end{Example}

Examples \ref{Example 6.3}, \ref{Example 6.4} and \ref{Example 6.5} satisfy $p= \alpha+\beta +2$, $p< \alpha+\beta +2$, and $p> \alpha+\beta +2$ together with $(p-1)\alpha +\beta \ge 0$, respectively. In these examples, $\alpha$ are all in the range $[\frac{k}{n}-1, +\infty)$.

Next, we show one more example with $\alpha$ in the range $(-1, \frac{k}{n}-1)$. 

\begin{Example}\label{Example 6.6}
For any $a \ge \left( n+\frac{1}{2} \right)^{-6}$ and $j\in \mathbb{R}$, 
\begin{equation}\label{example6}
u(x)=\frac{1}{4}a |x|^4+j
\end{equation} 
is an entire admissible subsolution of \eqref{1.6} with $k=n$, $\alpha=-\frac{1}{2}$, $\beta=\frac{1}{3}$, $p=\frac{3}{2}$ and $f(t)=e^{c t}$ ($c=0$).
\end{Example}



	\vspace{3mm}

    To end this section, we summarize the relationship between our examples of nonlinear partial differential equations and the Cauchy problem \eqref{1.1} in the following table \ref{Table1}.

\begin{table}[!htp]
   \centering
   {
    \linespread{1.5}  \selectfont
    
   \begin{tabular}{|m{3.8cm}|c|m{5cm}|}
        \hline
          { \ \ Nonlinear equations}&  Forms of equations&   Parameters in problem  \eqref{1.1}\\   
        \hline
         \ \ \ \ $k$-Hessian equation&   $S_k^{\frac{1}{k}} [u] = f(u)$& $C=\left (k/C_{n-1}^{k-1} \right )^{\frac{1}{k}}$, $q=\frac{n}{k}-1$, $\tau =n$, $\theta =k$, $g=f^k$\\\hline 
        
        \ \  $k$-Hessian equation with degenerate RHS&   $S_k^{\frac{1}{k}} [u] = |x|^\alpha |Du|^\beta f(u)$&  $C=\left[ \frac{k(1-\beta)}{C_{n-1}^{k-1}} \right]^{\frac{1}{k(1-\beta)}} $, $q=\frac{n}{k}-1$, $\tau =k(\alpha+\beta)-n\beta+n$, \newline $\theta = k(1-\beta)$, $g=f^k$\\\hline
        
        \ \ $p$-$k$-Hessian equation& $S_k^{\frac{1}{k}} [D(|Du|^{p-2}Du)] = f(u)$&  $C=\left (k/C_{n-1}^{k-1} \right )^{\frac{1}{k(p-1)}}$, \newline $q=\frac{n-k}{k(p-1)}$, $\tau =n$, \newline $\theta =k(p-1)$, $g=f^k$\\\hline  
        
         \ \ $p$-$k$-Hessian equation with degenerate RHS&  $S_k^{\frac{1}{k}} [D(|Du|^{p-2}Du)] = |x|^\alpha |Du|^\beta f(u)$&  $C=\left[ \frac{k(p-\beta-1)}{C_{n-1}^{k-1}(p-1)} \right]^{\frac{1}{k(p-\beta-1)}}$, \newline $q=\frac{n-k}{k(p-1)}$, $\tau =k\alpha+\frac{k-n}{p-1}\beta+n$, $\theta =k(p-\beta-1)$, $g=f^k$\\\hline
        
         \ \ $\Pi_k$-Hessian equation&  $\Pi_k^{\frac{1}{C_n^k}} [D^2u)] = f(u)$&  $C=n^{\frac{k}{n}}/ k$, $q=k-1$, $\tau =n$, $\theta=\frac{n}{k}$,  $g=f^{\frac{n}{k}}$\\\hline  
        
         \ \ $\Pi_k$-Hessian equation with degenerate RHS& $\Pi_k^{\frac{1}{C_n^k}} [D^2u)] = |x|^\alpha |Du|^\beta f(u)$&   $C=\left[ n(1-\beta) / k^{\frac{n}{k}}\right]^{\frac{k}{n(1-\beta)}}$, \newline  $q=k-1$, \newline $\tau =\frac{n}{k}(\alpha+\beta)-n\beta+n$, \newline $\theta = \frac{n(1-\beta)}{k}$, $g=f^{\frac{n}{k}}$\\\hline
         
         $p$-$\Pi_k$-Hessian equation& $\Pi_k^{\frac{1}{C_n^k}} [D(|Du|^{p-2}Du)] = f(u)$& $C=(n/ k^{\frac{n}{k}})^{\frac{n}{k(p-1)}}$, $q=\frac{k-1}{p-1}$, $\tau =n$, $\theta=\frac{n(p-1)}{k}$, $g=f^{\frac{n}{k}}$\\\hline
         
         $p$-$\Pi_k$-Hessian equation with degenerate RHS& $\Pi_k^{\frac{1}{C_n^k}} [D(|Du|^{p-2}Du)] = |x|^\alpha |Du|^\beta f(u)$& $C=\left [\frac{n(p-\beta-1)}{k^{\frac{n}{k}}(p-1)} \right ]^{\frac{n}{k(p-\beta-1)}}$, \newline $q=\frac{k-1}{p-1}$, \newline $\tau =n\left[\frac{\alpha}{k}+(\frac{1}{k}-1)\frac{\beta}{p-1} +1\right]$, \newline $\theta=\frac{n(p-\beta-1)}{k}$ , $g=f^{\frac{n}{k}}$\\\hline
   \end{tabular}
   
   }
   \vspace{2mm}
   
\caption{Nonlinear equations and the corresponding parameters in Cauchy problem \eqref{1.1}}\label{Table1}
\end{table}

\bibliographystyle{amsplain}

\end{sloppypar}
\end{document}